\newtheorem{theorem}{Theorem}[section]
\newtheorem{proposition}{Proposition}
\newtheorem{conjecture}{Conjecture}
\theoremstyle{definition}
\newtheorem{example}[theorem]{Example}
\newtheorem{bij}[theorem]{Bijection}
\theoremstyle{remark}
\newtheorem{remark}[theorem]{Remark}
\numberwithin{equation}{section}
\newcommand{\spt}[1]{\mbox{\normalfont spt}\Parans{#1}}
\newcommand{\sptBar}[2]{\overline{\mbox{\normalfont spt}}_{#1}\Parans{#2}}
\newcommand{\Mspt}[1]{\mbox{\normalfont M2spt}\Parans{#1}}
\newcommand{\Parans}[1]{\left(#1\right)}
\newcommand{\CBrackets}[1]{\left\{#1\right\}}
\newcommand{\PieceTwo}[4]
{
	\left\{
   	\begin{array}{ll}
      	#1 & #3 \\
       	#2 & #4
     	\end{array}
	\right.
}
\newcommand{\aqprod}[3]{\Parans{#1;#2}_{#3}}
\newcommand{\Jac}[2]{\left(\frac{#1}{#2}\right)}
\newcommand{\GEta}[3]{\eta_{#1,#2}\Parans{#3}}
\newcommand{\STwoB}{\mbox{\rm S2}}
\newcommand{\SPB}{\overline{\mbox{\rm SP}}}
\newcommand{\SB}{\overline{\mbox{\rm S}}}
\newcommand{\sptcrank}{\overline{\mbox{\rm sptcrank}}}
\newcommand{\crankb}{\overline{\mbox{\rm crank}}}
\newcommand{\sptb}{\overline{\mbox{\rm spt}}}
\newcommand{\qbin}[2]{\left[\begin{matrix} #1 \\ #2 \end{matrix} \right]}
\newcommand{\kb}{\overline{k}}
\newcommand{\ov}[1]{\overline{#1}}
\newcommand{\dd}{\mbox{--}}
\newcommand\ktwid{\overset {\text{\lower 3pt\hbox{$\sim$}}}k}
\newcommand\lambdatwid{\overset {\text{\lower 3pt\hbox{$\sim$}}}\lambda}
\newcommand{\abs}[1]{\lvert#1\rvert}
\providecommand{\spt}{\mbox{\rm spt}}
\newcommand\mylabel[1]{\label{#1}}
\newcommand{\beqs}{\begin{equation*}}
\newcommand{\eeqs}{\end{equation*}}
\newcommand{\beq}{\begin{equation}}
\newcommand{\eeq}{\end{equation}}
\newcommand\eqn[1]{(\ref{eq:#1})}
\newcommand\thm[1]{\ref{thm:#1}}
\author{FRANK G. GARVAN}
\address{Department of Mathematics, University of Florida\\
Gainesville, Florida 32611, USA
\endgraf  fgarvan@ufl.edu}
\author{CHRIS JENNINGS-SHAFFER}
\address{Department of Mathematics, University of Florida\\
Gainesville, Florida 32611, USA
\endgraf cjenningsshaffer@ufl.edu}
\keywords{Number theory; Andrews' spt-function; congruences; partitions; smallest parts function.}
\subjclass[2010]{Primary 11P83, 05A17, 11P82, 05A19}
\title{The SPT-Crank for Overpartitions}
\begin{document}

\begin{abstract}
Bringmann, Lovejoy, and Osburn \cite{BLO2,BLO1} showed that the generating 
functions of the spt-overpartition functions $\sptBar{}{n}$, $\sptBar{1}{n}$, $\sptBar{2}{n}$, and 
$\Mspt{n}$ are quasimock theta functions, and satisfy a number of simple 
Ramanujan-like congruences. Andrews, Garvan, and Liang \cite{AGL} defined an 
spt-crank in terms of weighted vector partitions which combinatorially explain
simple congruences mod $5$ and $7$ for $\spt{n}$. Chen, Ji, and Zang \cite{Ch-Ji-Za}
were able to define this spt-crank in terms of ordinary partitions. In this 
paper we define spt-cranks in terms of vector partitions that combinatorially
explain the known simple congruences for all the spt-overpartition functions
as well as new simple congruences. 
For all the overpartition functions except $\Mspt{n}$ we are able to define 
the spt-crank purely in terms of marked overpartitions.
The proofs of the congruences depend on Bailey's Lemma and the difference 
formulas for the Dyson rank of an overpartition \cite{LO1} 
and the $M_2$-rank of a partition without repeated odd parts
\cite{LO2}.
\end{abstract}

\maketitle

\section{Introduction}

\allowdisplaybreaks

Here we consider Ramanujan type congruences for various spt type 
functions and combinatorial interpretations of them in terms of rank and crank 
type functions. We recall the spt function began
with Andrews in \cite{Andrews} defining $\spt{n}$ as the number of 
smallest parts in the partitions of $n$. 
In the same paper he proved the following congruences.
\begin{theorem}\label{TheoremSptCongruences}
For $n\ge 0$ we have
\begin{align}\label{TheoremSptCongruence5np4}
	\spt{5n+4} &\equiv 0 \pmod{5},
\end{align}
\begin{align}\label{TheoremSptCongruence7np5}
	\spt{7n+5} &\equiv 0 \pmod{7},
\end{align}
\begin{align}\label{TheoremSptCongruence13np6}
	\spt{13n+6} &\equiv 0 \pmod{13}.
\end{align}
\end{theorem}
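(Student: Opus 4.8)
The plan is to follow Andrews' original route, whose engine is a $q$-series identity relating $\spt{n}$ to partition statistics. Write $S(q)=\sum_{n\ge 1}\spt{n}q^{n}$, let $N(m,n)$ count the partitions of $n$ with Dyson rank $m$, and set $N_{2}(n)=\sum_{m\in\mathbb{Z}}m^{2}N(m,n)$ and $M_{2}(n)=\sum_{m\in\mathbb{Z}}m^{2}M(m,n)$ for the second rank and crank moments. First I would establish Andrews' identity
\begin{equation*}
\spt{n}=np(n)-\tfrac12 N_{2}(n),
\end{equation*}
equivalently $\spt{n}=\tfrac12\bigl(M_{2}(n)-N_{2}(n)\bigr)$ together with $M_{2}(n)=2np(n)$. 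This follows by comparing generating functions: one recasts the defining sum $S(q)=\sum_{n\ge 1}q^{n}(1-q^{n})^{-2}(q^{n+1};q)_{\infty}^{-1}$ in the style of Euler's pentagonal number theorem, uses $\sum_{n}np(n)q^{n}=q\frac{d}{dq}(q;q)_{\infty}^{-1}$, and invokes the known Lambert-series expansion $\sum_{n}N_{2}(n)q^{n}=\frac{2}{(q;q)_{\infty}}\sum_{n\ge 1}\frac{(-1)^{n-1}q^{n(3n+1)/2}(1+q^{n})}{(1-q^{n})^{2}}$.

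For the congruences modulo $5$ and $7$ I would invoke the equidistribution of the Dyson rank --- Dyson's conjectures, proved by Atkin and Swinnerton-Dyer --- namely $N(r,5,5n+4)=\tfrac15 p(5n+4)$ and $N(r,7,7n+5)=\tfrac17 p(7n+5)$ for every residue class $r$, where $N(r,t,m)$ denotes the number of partitions of $m$ with rank $\equiv r\pmod{t}$. Since $m\equiv r\pmod{p}$ forces $m^{2}\equiv r^{2}\pmod{p}$, reducing $N_{2}$ modulo a prime $p$ collapses it to $\bigl(\sum_{r=0}^{p-1}r^{2}\bigr)$ times the common residue-class count; as $\sum_{r=0}^{4}r^{2}=30\equiv 0\pmod{5}$ and $\sum_{r=0}^{6}r^{2}=91\equiv 0\pmod{7}$, this yields $N_{2}(5n+4)\equiv 6\,p(5n+4)\pmod{5}$ and $N_{2}(7n+5)\equiv 13\,p(7n+5)\pmod{7}$. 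Substituting into $2\spt{n}=2np(n)-N_{2}(n)$ leaves $\spt{n}$ congruent, on each progression, to a unit times $p(5n+4)$, resp.\ $p(7n+5)$, so Ramanujan's congruences $p(5n+4)\equiv 0\pmod{5}$ and $p(7n+5)\equiv 0\pmod{7}$ finish these two cases. (These are exactly the two congruences that the Andrews--Garvan--Liang spt-crank renders combinatorially transparent, which is the theme of the rest of this paper.)

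The case modulo $13$ is where I expect essentially all the difficulty to lie: there is no Ramanujan congruence for $p(n)$ modulo $13$, and the rank is \emph{not} equidistributed on $13n+6$ --- equidistribution would force $13\mid p(13n+6)$, which is false --- so the argument above breaks down. Instead one has $2\spt{13n+6}\equiv -p(13n+6)-N_{2}(13n+6)\pmod{13}$, so the claim is equivalent to the vanishing modulo $13$ of the coefficients of $q^{13n+6}$ in
\begin{equation*}
\frac{1}{(q;q)_{\infty}}\left(1+2\sum_{n\ge 1}\frac{(-1)^{n-1}q^{n(3n+1)/2}(1+q^{n})}{(1-q^{n})^{2}}\right).
\end{equation*}
Using $(q;q)_{\infty}^{13}\equiv(q^{13};q^{13})_{\infty}\pmod{13}$ one may replace $(q;q)_{\infty}^{-1}$ by $(q;q)_{\infty}^{12}(q^{13};q^{13})_{\infty}^{-1}$ modulo $13$, and since multiplication by $(q^{13};q^{13})_{\infty}^{-1}$ does not mix residue classes of exponents modulo $13$, the problem reduces to extracting the progression $13n+6$ from a $q$-series with integer coefficients. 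Carrying out this extraction --- by an explicit $13$-dissection of the pentagonal-type series in the spirit of Andrews' original proof, or else via the theory of modular forms modulo $13$, where the statement reduces to verifying finitely many coefficients through a Sturm-type bound --- is the crux; it is also the one congruence here that neither the rank nor the spt-crank explains.
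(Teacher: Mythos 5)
The paper does not reprove this theorem; it quotes it from Andrews and records only that the proof proceeds by relating $\spt{n}$ to the second rank moment and then invoking rank-difference formulas from Atkin--Swinnerton-Dyer and O'Brien. Your framework is exactly that route, and your treatment of the moduli $5$ and $7$ is complete and correct: the identity $\spt{n}=np(n)-\tfrac12 N_2(n)$ (with $M_2(n)=2np(n)$), the reduction $N_2(n)\equiv\sum_{r=0}^{p-1}r^2N(r,p,n)\pmod p$, the Atkin--Swinnerton-Dyer equidistribution $N(r,5,5n+4)=p(5n+4)/5$ and $N(r,7,7n+5)=p(7n+5)/7$, the vanishing of $30\pmod 5$ and $91\pmod 7$, and the final appeal to Ramanujan's congruences all fit together as you say.

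The genuine gap is the modulus $13$. You correctly reduce the claim to showing that the coefficient of $q^{13n+6}$ in $p(n)+N_2(n)$ vanishes mod $13$, and you correctly observe that equidistribution fails there, but you then stop at ``carrying out this extraction is the crux'' without doing it. That extraction is not a routine dissection: one needs the explicit formulas for the rank differences $N(r,13,13n+6)-N(s,13,13n+6)$ on the progression $13n+6$, which is precisely the content of O'Brien's thesis (cited in the paper as the source of ``the required rank differences''), or an equivalent modular-forms computation with a verified Sturm bound. Concretely, writing $N_2(13n+6)\equiv 2\sum_{r=1}^{6}r^2N(r,13,13n+6)$ and $p(13n+6)=N(0,13,13n+6)+2\sum_{r=1}^{6}N(r,13,13n+6)$, the needed congruence is a specific nontrivial linear relation among the seven counts $N(r,13,13n+6)$, and without the difference formulas (or the finite coefficient check you allude to but do not perform) the proof of \eqref{TheoremSptCongruence13np6} is not established. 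As written, your argument proves the mod $5$ and mod $7$ assertions but only outlines a strategy for the mod $13$ one.
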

These congruences are reminiscent of the Ramanujan congruences
for the partition function. The proof of Theorem \ref{TheoremSptCongruences} 
relied on relating the $\mbox{\normalfont spt}$ function
to the second moment of the rank function for partitions. With this
$\spt{n}$ could be expressed in terms of rank differences. Formulas for
the required rank differences are found in \cite{AS} and 
\cite{Obrien}.

We recall an overpartition of $n$ is a partition of $n$ in which the first
occurrence of a part may be overlined.  In \cite{BLO2} Bringmann, Lovejoy, and Osburn defined 
 $\sptBar{}{n}$ as the number of 
smallest parts in the overpartitions of $n$. Additionally they defined
$\sptBar{1}{n}$ to be the number of smallest parts in the overpartitions
of $n$ with smallest part odd and $\sptBar{2}{n}$ to be the number of 
smallest parts in the overpartitions of $n$ with smallest even. 
We alter these definitions to only count the smallest parts of the
overpartitions of $n$ where the smallest part is not overlined.
This simply means the count of smallest parts here is half of 
the count of smallest parts in \cite{BLO2} and in other articles. This does not 
have any affect on congruences unless the modulus is even. We
illustrate this change with an example.

The overpartitions of $4$ are
$$
4, 
\mbox{ }\overline{4}, 
\mbox{ }3+1, 
\mbox{ }\overline{3}+1, 
\mbox{ }3+\overline{1}, 
\mbox{ }\overline{3}+\overline{1},
\mbox{ }2+2, 
\mbox{ }\overline{2}+2, 
\mbox{ }2+1+1, 
\mbox{ }\overline{2}+1+1, 
\mbox{ }2+\overline{1}+1, 
\mbox{ }\overline{2}+\overline{1}+1, 
\mbox{ }1+1+1+1, 
\mbox{ }\overline{1}+1+1+1
$$
and so $\sptBar{}{4} = 13$, $\sptBar{1}{4} = 10$, and $\sptBar{2}{4} = 3$.

Bringmann, Lovejoy, and Osburn \cite{BLO2} proved the following congruences
for these new spt functions.
\begin{theorem}\label{TheoremSptBarCongruences}
For $n\ge 0$ we have
\begin{align}\label{TheoremSptBarCongruence3n}
	\sptBar{}{3n} & \equiv 0 \pmod{3},
	\\
	\label{TheoremSptBar1Congruence3n}
	\sptBar{1}{3n} &\equiv 0 \pmod{3},
	\\
	\label{TheoremSptBar1Congruence5n}
	\sptBar{1}{5n} &\equiv 0 \pmod{5},
	\\
	\label{TheoremSptBar2Congruence3n}
	\sptBar{2}{3n} &\equiv 0 \pmod{3},
	\\
	\label{TheoremSptBar2Congruence3np1}
	\sptBar{2}{3n+1} &\equiv 0 \pmod{3},
	\\
	\label{TheoremSptBar2Congruence5np3}
	\sptBar{2}{5n+3} &\equiv 0 \pmod{5}.
\end{align}
\end{theorem}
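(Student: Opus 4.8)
Since every overpartition of $n$ counted by $\sptBar{}{n}$ has smallest part either odd or even, we have $\sptBar{}{n}=\sptBar{1}{n}+\sptBar{2}{n}$, so \eqref{TheoremSptBarCongruence3n} follows from \eqref{TheoremSptBar1Congruence3n} and \eqref{TheoremSptBar2Congruence3n}; it therefore suffices to prove the congruences for $\sptBar{1}{n}$ and $\sptBar{2}{n}$. The plan is to imitate Andrews' proof of Theorem \ref{TheoremSptCongruences}, replacing each spt-overpartition function by a second moment of a rank statistic. Starting from the $q$-series for $\sptBar{1}{n}$ and $\sptBar{2}{n}$ and applying Bailey's Lemma to the relevant Bailey pairs, as in \cite{BLO2}, one obtains identities expressing $\sptBar{1}{n}$ through the second moment $\overline{N}_2(n):=\sum_m m^2\,\overline{N}(m,n)$ of the Dyson rank of an overpartition and $\sptBar{2}{n}$ through the corresponding second moment $N2_2(n)$ of the $M_2$-rank of a partition without repeated odd parts, up to an elementary \emph{main term} that is a polynomial in $n$ times a partition count --- schematically $\sptBar{i}{n}=(\text{main term})-\tfrac12(\text{second rank moment})$, in analogy with $\spt{n}=n\,p(n)-\tfrac12 N_2(n)$ in the classical case.

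Modulo $\ell$, the main term is easy to control: on the progressions $3n$ and $5n$ the factor $n$ already gives divisibility by $\ell$, while on the progressions $3n+1$ and $5n+3$ the required vanishing follows from an $\ell$-dissection of the relevant infinite product. The analysis thus comes down to the second rank moments. Reducing squares modulo the prime and using the rank symmetry $\overline{N}(m,n)=\overline{N}(-m,n)$ (and its $M_2$-analogue), one finds modulo $3$, from $\{0^2,1^2,2^2\}\equiv\{0,1,1\}$, that $\overline{N}_2(n)\equiv \overline{N}(1,3,n)+\overline{N}(2,3,n)\equiv 2\,\overline{N}(1,3,n)\pmod 3$, and modulo $5$, from $\{0^2,\dots,4^2\}\equiv\{0,1,4,4,1\}$, that $\overline{N}_2(n)\equiv 2\bigl(\overline{N}(1,5,n)-\overline{N}(2,5,n)\bigr)\pmod 5$; the same reductions hold for $N2_2(n)$. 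Combined with $\sum_{r}\overline{N}(r,\ell,n)=\overline{p}(n)$, these identities recast each of \eqref{TheoremSptBar1Congruence3n}--\eqref{TheoremSptBar2Congruence5np3} as a divisibility-by-$\ell$ statement for an explicit rank-difference generating function $\sum_n\bigl(\overline{N}(s,\ell,n)-\overline{N}(t,\ell,n)\bigr)q^n$, or its $M_2$-counterpart, along the relevant arithmetic progression.

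The final step is to invoke the difference formulas for the Dyson rank of an overpartition from \cite{LO1} and for the $M_2$-rank of a partition without repeated odd parts from \cite{LO2}, which give precisely these rank differences as eta-quotients and theta series; dissecting the closed form along the progression in question then exhibits the divisibility by $3$ or $5$. I expect this dissection to be the principal obstacle. The overpartition rank-difference formulas are considerably heavier than the Atkin--Swinnerton-Dyer rank identities that drive Andrews' original argument, so the $3$- and $5$-dissections require real computation; in addition one must take care to match the normalization in the spt-to-moment identities with the present convention of counting only non-overlined smallest parts, and to line up overpartitions with even smallest part against partitions without repeated odd parts so that the input of \cite{LO2} is applicable. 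Once the dissections are aligned correctly, the desired divisibility should be visible directly from the shape of the resulting theta quotients, exactly as in the proof of Theorem \ref{TheoremSptCongruences}.
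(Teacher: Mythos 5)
Your opening reduction $\sptBar{}{n}=\sptBar{1}{n}+\sptBar{2}{n}$ is fine, and the mod-$\ell$ reduction of a second rank moment to the class differences $\overline{N}(s,\ell,n)-\overline{N}(t,\ell,n)$, followed by an appeal to the Lovejoy--Osburn difference formulas, is exactly the role that \cite{LO1} and \cite{LO2} play in the paper. But two of your structural claims are wrong in ways that would sink the argument as written. First, the ``main term'' is not a polynomial in $n$ times a partition count. Even for $\sptBar{}{n}$ the correct identity is $\sptBar{}{n}=\tfrac12\bigl(\overline{M}_2(n)-\overline{N}_2(n)\bigr)$, where $\overline{M}_2$ is the second moment of the \emph{residual crank} of overpartitions; the classical identity $M_2(n)=2np(n)$ has no analogue here (already $\overline{M}_2(1)=2\neq 2\cdot 1\cdot\overline{p}(1)=4$), so ``the factor $n$ already gives divisibility by $\ell$'' on the progressions $3n$ and $5n$ is simply not available. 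The crank moment has to be reduced mod $\ell$ exactly as you reduce the rank moment and then controlled by a genuine $\ell$-dissection of $\aqprod{q^2}{q^2}{\infty}\aqprod{q}{q}{\infty}/\aqprod{q^\ell}{q^\ell}{\infty}$; this is the content of Theorems \ref{ThreeDissectionOverpartionCrank} and \ref{FiveDissectionOverpartionCrank} and requires real input (a Ramanujan continued-fraction identity and a theorem of Chan for $\ell=3$, Garvan's dissection lemmas for $\ell=5$). Second, you have paired $\sptBar{2}{n}$ with the wrong statistic: the $M_2$-rank of partitions without repeated odd parts governs $\Mspt{n}$, not $\sptBar{2}{n}$, and no amount of ``lining up'' makes \cite{LO2} applicable to $\sptBar{2}{n}$. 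By Theorems \ref{SptBar2RankCrank} and \ref{SptBar1RankCrank}, both $\sptBar{1}{n}$ and $\sptBar{2}{n}$ are governed by $\tfrac12\overline{N}(m,n)$, the residual crank, and an additional Lambert-type series $\frac{\aqprod{-q}{q}{\infty}}{\aqprod{q}{q}{\infty}}\bigl(\tfrac12+\sum_{n\ge1}\frac{(1-z)(1-z^{-1})(-1)^nq^n}{(1-zq^n)(1-z^{-1}q^n)}\bigr)$ that is entirely absent from your accounting. Its $3$- and $5$-dissections at roots of unity (Theorems \ref{ThreeDissectionExtraSeries} and \ref{FiveDissectionExtraSeries}) are among the hardest steps of the proof; the mod-$5$ case uses weight-one Eisenstein series on $\Gamma_1(10)$ and modular-function verifications on $\Gamma_1(200)$.

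For comparison, the paper does not pass through second moments at all: it introduces the two-variable series $\SB(z,q)$, $\SB_1(z,q)$, $\SB_2(z,q)$, uses Bailey pairs and Watson's transformation to write $(1-z)(1-z^{-1})$ times each as a rank-minus-crank generating function (plus the extra series), and then shows that the relevant coefficients of the specializations at $z=\zeta_3,\zeta_5$ vanish by matching the dissection components. That route proves not just the congruences but the equidistribution of the spt-crank into $\ell$ equal classes, which is the paper's main point. If you want to salvage your moment-based sketch, you must (i) replace the fictitious $n\cdot(\text{count})$ main term by the residual-crank moment and the extra-series moment and dissect both, and (ii) use the Dyson rank of overpartitions, not the $M_2$-rank of partitions without repeated odd parts, for $\sptBar{1}$ and $\sptBar{2}$. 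At that point you will have reconstructed essentially all of Sections 4 and 5 of the paper, only without the combinatorial refinement.
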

The proof of these congruences relied on expressing these functions 
in terms of the second moments of certain rank and crank functions which
relate to quasi-modular forms. We will give another proof of
these congruences which gives new combinatorial interpretations of these 
congruences. We describe this method shortly.

In \cite{ABL} Ahlgren, Bringmann, and Lovejoy defined $\Mspt{n}$ 
to be the number of smallest parts in the partitions of $n$
without repeated odd parts and with smallest part even. One congruence
they proved for $\Mspt{n}$ is that for any prime $\ell\ge 3$,
any integer $m\ge 1$, and $n$ such that $\Jac{-n}{\ell}=1$, we have
\begin{align}
	\Mspt{\frac{\ell^{2m}n+1}{8}}\equiv 0\pmod{\ell^m}.
\end{align}
However none of the current known congruences for $\Mspt{n}$ appear to be of 
the form of the congruences we
have mentioned for $\spt{n}$ and $\sptBar{}{n}$, rather they are congruences
related to certain Hecke operators. One of the results of
this paper will be to prove such congruences by giving combinatorial 
refinements.

We will prove the following congruences for $\Mspt{n}$.
\begin{theorem}\label{TheoremM2SptBarCongruences}
For $n\ge 0$ we have
\begin{align}\label{TheoremM2SptBarCongruence3np1}
	\Mspt{3n+1} &\equiv 0 \pmod{3},
\end{align}
\begin{align}\label{TheoremM2SptBarCongruence5np1}
	\Mspt{5n+1} &\equiv 0 \pmod{5},
\end{align}
\begin{align}\label{TheoremM2SptBarCongruence5np3}
	\Mspt{5n+3} &\equiv 0 \pmod{5}.
\end{align}
\end{theorem}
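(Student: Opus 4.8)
The plan is to attach to $\Mspt{n}$ a combinatorial spt-crank defined on a set of weighted vector partitions, and to reduce the three congruences to the vanishing of a two-variable generating function at third and fifth roots of unity. First I would define a set $\STwoB$ of vector partitions $\vec\pi=(\pi_1,\pi_2,\pi_3)$ whose shape mirrors a partition without repeated odd parts having a marked even smallest part: $\pi_1$ carries distinct odd parts (contributing a $(-q;q^2)_\infty$ factor), while the even parts are distributed between $\pi_2$ and $\pi_3$, which record the crank $\mathrm{crank}(\vec\pi)=\#(\pi_2)-\#(\pi_3)$. Equip $\vec\pi$ with the weight $\omega(\vec\pi)=(-1)^{\#(\pi_1)}$ and with size/parity conventions chosen so that $\sum_{|\vec\pi|=n}\omega(\vec\pi)=\Mspt{n}$. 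Writing $S_{M2}(z,q)=\sum_{n\ge1}\sum_{m}M_{M2}(m,n)z^mq^n$ for the resulting generating function, so that $S_{M2}(1,q)=\sum_{n}\Mspt{n}q^n$, the congruences will follow once I show that for $\ell\in\{3,5\}$ and the residues $r$ in the statement the sum of $M_{M2}(m,n)$ over $m$ in any fixed class modulo $\ell$ is independent of the class; equivalently, that the coefficient of $q^{n}$ in $S_{M2}(\zeta_\ell,q)$ is $0$ for $n$ in the progression, where $\zeta_\ell=e^{2\pi i/\ell}$.

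Second, I would derive a closed form for $S_{M2}(z,q)$. Summing over $\pi_2$ and $\pi_3$ produces a factor such as $\frac{1}{(zq^2;q^2)_\infty(z^{-1}q^2;q^2)_\infty}$ times a one-variable sum over $\pi_1$; applying Bailey's Lemma with a Bailey pair adapted to the ``no repeated odd parts, smallest part even'' conditions transforms this into an Appell--Lerch plus partial theta form. The target is an identity of the same nature as the Andrews--Garvan--Liang identity for $\spt{n}$ from \cite{AGL}: one expressing $(1-z)(1-z^{-1})\,S_{M2}(z,q)$ as an explicit theta quotient plus a multiple of the two-variable $M_2$-rank generating function $R_{M2}(z,q)=\sum_{m,n}N_{M2}(m,n)z^mq^n$ for partitions without repeated odd parts. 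This is the $\Mspt$ analogue of the spt-crank identities we establish for $\sptBar{}{n}$, $\sptBar{1}{n}$, and $\sptBar{2}{n}$ elsewhere in the paper.

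Third, I would set $z=\zeta_\ell$. The explicit theta quotient then becomes an eta-quotient whose $\ell$-dissection is classical, and $R_{M2}(\zeta_\ell,q)$ is governed by the $M_2$-rank difference formulas of Lovejoy and Osburn \cite{LO2}, which provide closed forms for $\sum_n\big(N_{M2}(s,\ell;\ell n+r)-N_{M2}(t,\ell;\ell n+r)\big)q^n$ and, in particular, show that $R_{M2}(\zeta_\ell,q)$ has no terms $q^{\ell n+r}$ for exactly the residues $r$ occurring in Theorem \ref{TheoremM2SptBarCongruences} (namely $r=1$ when $\ell=3$, and $r=1,3$ when $\ell=5$). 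Combining this with the fact that the auxiliary theta quotient contributes nothing to those progressions, which reduces to standard dissections of eta-quotients, forces the coefficient of $q^{\ell n+r}$ in $S_{M2}(\zeta_\ell,q)$ to vanish. Hence the spt-crank partitions the weighted vector partitions of $\ell n+r$ equally among the $\ell$ residue classes, and $\Mspt{\ell n+r}\equiv0\pmod{\ell}$.

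The step I expect to be the main obstacle is the second: identifying the correct Bailey pair that encodes ``partition without repeated odd parts with smallest part even'', and then manipulating the resulting $q$-hypergeometric series into a form in which the root-of-unity specialization and its interaction with the $M_2$-rank generating function are transparent. Once that identity is in place the remainder is essentially bookkeeping with the Lovejoy--Osburn difference formulas together with elementary eta-quotient dissections.
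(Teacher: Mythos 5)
Your overall architecture matches the paper's: a two-variable generating function $\STwoB(z,q)$ attached to weighted vector partitions, a Bailey-pair/Watson identity expressing $(1-z)(1-z^{-1})\STwoB(z,q)$ as the $M_2$-rank generating function minus an explicit infinite-product ``residual crank'' piece, and then specialization at $z=\zeta_\ell$ together with the Lovejoy--Osburn difference formulas. However, your third step contains a genuine error in the mechanism by which the coefficients vanish. You claim that the $M_2$-rank generating function $R_{M2}(\zeta_\ell,q)$ ``has no terms $q^{\ell n+r}$'' on the relevant progressions and that the theta-quotient piece ``contributes nothing to those progressions,'' so that each summand dies separately. Neither claim is true. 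For instance, at $\ell=3$, $r=1$ the Lovejoy--Osburn formulas give
\begin{equation*}
\sum_{n\ge0}\sum_{m} N2(m,3n+1)\,\zeta_3^m\,q^{n}
=\frac{\aqprod{q^6}{q^6}{\infty}^4}{\aqprod{q^2}{q^2}{\infty}\aqprod{q^3}{q^3}{\infty}\aqprod{q^{12}}{q^{12}}{\infty}}\neq 0,
\end{equation*}
and the residual-crank product specialized at $\zeta_3$ has exactly the same (nonzero) component on the progression $3n+1$. The congruence holds because these two dissection components are \emph{equal} and cancel in the difference, not because each vanishes. The same is true for $\ell=5$ with $r=1,3$, where both components equal $\frac{\aqprod{-q^5,q^{10}}{q^{10}}{\infty}}{\aqprod{q^2,q^8}{q^{10}}{\infty}}$ and $(\zeta_5+\zeta_5^4)\frac{\aqprod{-q^5,q^{10}}{q^{10}}{\infty}}{\aqprod{q^4,q^6}{q^{10}}{\infty}}$ respectively. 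Following your plan literally, you would compute the rank dissection, find it nonzero, and be stuck.

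The fix is conceptually small but requires real additional work that your proposal omits: you must compute the full $3$- and $5$-dissections of the residual crank product $\frac{\aqprod{-q}{q^2}{\infty}\aqprod{q^2}{q^2}{\infty}}{\aqprod{\zeta_\ell q^2,\zeta_\ell^{-1}q^2}{q^2}{\infty}}$ as explicit eta/theta quotients (in the paper this is done by relating it to the overpartition-crank dissection via $q\mapsto -q$ for $\ell=3$, and by multiplying out two $5$-dissections of Garvan-type lemmas for $\ell=5$) and then verify the product identities asserting that the rank and crank components agree on the stated progressions. A secondary quibble: your weight $(-1)^{\#(\pi_1)}$ is placed on the distinct-odd-parts component, whereas the sign in $\STwoB(z,q)$ comes from the factor $\aqprod{q^{2n+2}}{q^2}{\infty}$, i.e.\ from a component of distinct \emph{even} parts exceeding the smallest part; the distinct odd parts contribute only positively. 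You hedge this with ``conventions chosen so that,'' but as written the combinatorial model would not reproduce $\Mspt{n}$ at $z=1$.
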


Also we will determine the parity of $\sptBar{}{n}$, $\sptBar{1}{n}$, 
and $\sptBar{2}{n}$.
\begin{theorem}\label{TheoremSptBar1CongruenceOdd}
For $n\ge 1$ we have
	$\sptBar{}{n} \equiv 1 \pmod{2}$
if and only if $n$ is a square or twice a square,
	$\sptBar{1}{n} \equiv 1 \pmod{2}$
if and only if $n$ is an odd square, and
	$\sptBar{2}{n} \equiv 1 \pmod{2}$
if and only if $n$ is an even square or twice a square.
\end{theorem}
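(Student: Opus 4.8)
The plan is to attack the parity of each of the three spt-overpartition functions by producing, for each one, a generating function identity modulo $2$ whose right-hand side is a recognizable theta series. The starting point is the standard expression for an spt-type function as a difference of a weighted count (a partition or overpartition count with a part-multiplicity weight) and a rank or crank second moment; in the overpartition setting the relevant identities come from Bringmann--Lovejoy--Osburn \cite{BLO2}. From those one gets, say,
\begin{equation*}
	\sum_{n\ge 1}\sptBar{}{n}q^n \;=\; \text{(a second-moment/Lambert-type series)},
\end{equation*}
and similarly for $\sptBar{1}{n}$ and $\sptBar{2}{n}$. The first step is to reduce each such series modulo $2$. Many of the building blocks simplify drastically: for instance $(q;q)_\infty \equiv (q;q^2)_\infty^{?}$-type collapses, $(-q;q)_\infty \equiv 1/(q;q^2)_\infty$, and Jacobi-style products reduce to simple theta functions by Euler's pentagonal-number theorem and Gauss's identities $\sum q^{n^2}$, $\sum (-1)^n q^{n^2}$, $\sum q^{n(n+1)/2}$. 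The goal of this step is to show that $\sum \sptBar{}{n} q^n$ is congruent mod $2$ to $\psi(q)$ or $\varphi(q)$ (in Ramanujan's notation, $\varphi(q)=\sum_{n} q^{n^2}$, $\psi(q)=\sum_{n\ge 0} q^{n(n+1)/2}$) after an appropriate substitution, with the exponents that survive being exactly the squares and twice-squares.

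Concretely, I expect the cleanest route is via the known relation of $\sptBar{}{n}$ to the overpartition rank: $\sptBar{}{n}$ equals (half) the difference between a weighted overpartition count and $\tfrac12 N_2(n)$ where $N_2$ is the second moment of the Dyson rank for overpartitions. Modulo $2$ the second-moment term, being $\sum_m m^2 N(m,n)$, is congruent to $\sum_m N(m,n) \cdot [m \text{ odd}]$ — or one uses the even simpler fact that a second moment is congruent mod $2$ to the count of parts of a fixed parity — and the surviving piece telescopes against the weighted count. After the dust settles one is left with a bare infinite product, and recognizing it as a unary theta function is where the number $n$ = square or twice a square emerges: $(-q;q)_\infty/(q;q)_\infty$-type quotients reduce mod $2$ to $\sum q^{n^2}+\sum q^{2n^2}$ exactly. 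The three cases differ only in which modular substitution ($q\mapsto q$ for $\sptBar{}{}$, restriction to odd exponents for $\sptBar{1}{}$, the complementary restriction for $\sptBar{2}{}$) one applies, and the claimed characterizations (square or twice a square; odd square; even square or twice a square) are precisely the images of $\varphi$ and $\psi$ under those restrictions.

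I would carry this out in the order: (i) write down the three generating-function identities for $\sptBar{}{n}$, $\sptBar{1}{n}$, $\sptBar{2}{n}$ from \cite{BLO2} (rescaled by the factor $\tfrac12$ forced by this paper's altered definition); (ii) reduce the rank/crank second-moment pieces modulo $2$, using that $m^2\equiv m \pmod 2$ and that mod $2$ a weighted partition count by number of some kind of part collapses to a product; (iii) apply Euler/Gauss product-to-series identities to the resulting infinite products to get $\varphi$, $\psi$, or their odd/even parts; (iv) read off the exponent sets. The main obstacle is step (ii): the second-moment terms are genuine mock-theta-type objects, not modular forms, so I cannot just quote a modular reduction — I have to exploit the \emph{combinatorial} identity $m^2 \equiv m \pmod 2$ at the level of the rank-generating function $\sum_{m,n} N(m,n) z^m q^n$, i.e. differentiate-and-specialize carefully so that the mock pieces cancel and only a true theta function survives. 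Making that cancellation rigorous, rather than formal, is the crux; everything after it is a routine product manipulation.
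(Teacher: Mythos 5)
Your plan has a genuine gap, and it sits exactly where you flagged it. You commit to the route in which $\sptBar{}{n}$ is written as \emph{one half} of a difference involving a rank second moment. The parity of half of a quantity is not determined by that quantity modulo $2$; you would need the Bringmann--Lovejoy--Osburn identity modulo $4$. Moreover the piece you must then control, $\tfrac12\sum_m m^2\overline{N}(m,n)=\sum_{m>0}m^2\overline{N}(m,n)$, is congruent mod $2$ to the number of overpartitions of $n$ with positive odd rank; by the symmetry $\overline{N}(m,n)=\overline{N}(-m,n)$ this is half the number of odd-rank overpartitions, so deciding its parity amounts to knowing the rank distribution mod $4$ --- i.e.\ the $z=i$ specialization of the rank series, which is precisely the nontrivial (quasimock) information your sketch provides no mechanism for. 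So ``reduce each series modulo $2$'' cannot close the argument along the path you chose. A second concrete error: the reduction you quote at the end is false --- since $\aqprod{-q}{q}{\infty}\equiv\aqprod{q}{q}{\infty}\pmod 2$, the quotient $\aqprod{-q}{q}{\infty}/\aqprod{q}{q}{\infty}$ is $\equiv 1\pmod 2$, not $\sum q^{n^2}+\sum q^{2n^2}$; the series that does reduce to $\sum_{n\ge1}q^{n^2}+\sum_{n\ge1}q^{2n^2}$ mod $2$ is the odd-divisor Lambert series, not a product quotient.

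The salvage is much simpler than your plan, and the paper points it out in passing: apply the mod $2$ collapse directly to the single-variable generating functions (\ref{Section1SptBar})--(\ref{Section1Spt1Bar}), where the factor $\tfrac12$ is already built in. Since $\aqprod{-q^{n+1}}{q}{\infty}\equiv\aqprod{q^{n+1}}{q}{\infty}$ and $(1-q^n)^{-2}\equiv(1-q^{2n})^{-1}\pmod 2$, one gets
\begin{equation*}
	\sum_{n\ge 1}\sptBar{}{n}q^n \;\equiv\; \sum_{n\ge 1}\frac{q^n}{1-q^{2n}} \pmod{2},
\end{equation*}
whose coefficient of $q^N$ is the number of odd divisors of $N$, odd exactly when the odd part of $N$ is a square, i.e.\ when $N$ is a square or twice a square; restricting the outer sum to odd (resp.\ even) $n$ gives the $\sptBar{1}{}$ (resp.\ $\sptBar{2}{}$) statements. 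The paper's actual proof is different and stronger: it evaluates the two-variable crank series at $z=i$, using Theorems \ref{SptBarRankCrank} and \ref{SptBar1RankCrank} together with classical identities for Ramanujan's $\phi$, to obtain the exact theta evaluations of Theorem \ref{TheoremTwoDissectionSBars} (e.g.\ $\SB_1(i,q)=\sum q^{(2n-1)^2}$); since $\sptBar{1}{n}\equiv N_{\SB_1}(0,4,n)-N_{\SB_1}(2,4,n)\pmod 2$, the parity follows, and in addition one gets the mod $4$ combinatorial refinement of Theorem \ref{thm:mainthm}(vi)--(viii), which no bare mod $2$ reduction can deliver.
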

In Theorem \ref{TheoremSptBar1CongruenceOdd} it is important 
to note that we're using the convention
of not counting the smallest parts of overpartitions when
the smallest part is overlined. Otherwise we would have $\sptBar{}{n}$,
$\sptBar{1}{n}$, and $\sptBar{2}{n}$ are 
trivially always even and instead these congruences tells
when they are $0$ or $2$ modulo $4$. The method we use to prove these
parity results gives a combinatorial explanation as well, however if one works
modulo $2$ just with the single variable generating functions listed below, 
the parity follows immediately upon noticing the generating functions reduce 
to the sum of divisors generating function.

The generating functions for the spt functions are given as follows, these are special
cases of a general SPT function due to Bringmann, Lovejoy, and Osburn
\cite[Section 7]{BLO1},
\begin{align}
	\mbox{\normalfont SPT}(d,e;q)
	&=
	\frac{\aqprod{-dq}{q}{\infty}\aqprod{-eq}{q}{\infty}}
	{\aqprod{deq}{q}{\infty}\aqprod{q}{q}{\infty}}
	\sum_{n=1}^\infty	
	\frac{q^n\aqprod{q}{q}{n}\aqprod{deq}{q}{n}}
	{(1-q^n)^2 \aqprod{-dq}{q}{n} \aqprod{-eq}{q}{n} }.
\end{align}

The case $d=0$, $e=0$ gives a generating function for $\spt{n}$,
\begin{align}
	\sum_{n=1}^\infty \spt{n}q^n	
	&=
	\sum_{n=1}^\infty \frac{q^n}
		{(1-q^n)^2\aqprod{q^{n+1}}{q}{\infty}}.
\end{align}

The case $d=1$, $e=0$ gives a generating function for $\sptBar{}{n}$, 
\begin{align}\label{Section1SptBar}
	\sum_{n=1}^\infty \sptBar{}{n}q^n	
	&=
	\sum_{n=1}^\infty \frac{q^n \aqprod{-q^{n+1}}{q}{\infty}}
		{(1-q^n)^2\aqprod{q^{n+1}}{q}{\infty}}.
\end{align}

The case $d=1$, $e=1/q, q=q^2$ gives a generating function for $\sptBar{2}{n}$, 
\begin{align}\label{Section1Spt2Bar}
	\sum_{n=1}^\infty \sptBar{2}{n}q^n	
	&=
	\sum_{n=1}^\infty \frac{q^{2n} \aqprod{-q^{2n+1}}{q}{\infty}}
		{(1-q^{2n})^2\aqprod{q^{2n+1}}{q}{\infty}}.
\end{align}

Similar to the $\sptBar{}{n}$ and $\sptBar{2}{n}$ we see a generating function for $\sptBar{1}{n}$ is
\begin{align}\label{Section1Spt1Bar}
	\sum_{n=1}^\infty \sptBar{1}{n}q^n	
	&=
	\sum_{n=0}^\infty \frac{q^{2n+1} \aqprod{-q^{2n+2}}{q}{\infty}}
		{(1-q^{2n+1})^2\aqprod{q^{2n+2}}{q}{\infty}}.
\end{align}

The case $d=0$, $e=1/q, q=q^2$ gives a generating function for $\Mspt{n}$,
\begin{align}\label{Section1M2Spt}
	\sum_{n=1}^\infty \Mspt{n}q^n
	&=
	\sum_{n=1}^\infty \frac{q^{2n} \aqprod{-q^{2n+1}}{q^2}{\infty}}
		{(1-q^{2n})^2\aqprod{q^{2n+2}}{q^2}{\infty}}.
\end{align}
Here we are using the product notation,
\begin{align}
	\aqprod{a}{q}{\infty} &= \prod_{k=0}^\infty (1-aq^k)
	,
	\\
	\aqprod{a}{q}{n} &= \frac{\aqprod{a}{q}{\infty}}{\aqprod{aq^n}{q}{\infty}}
	,
	\\
	\aqprod{a_1,a_2,\dots,a_j}{q}{\infty} 
	&=
	\aqprod{a_1}{q}{\infty}\aqprod{a_2}{q}{\infty}\dots\aqprod{a_j}{q}{\infty}		
	,
	\\
	\aqprod{a_1,a_2,\dots,a_j}{q}{n} 
	&=
	\aqprod{a_1}{q}{n}\aqprod{a_2}{q}{n}\dots\aqprod{a_j}{q}{n}		
	.
\end{align}

We note that the three special cases of SPT$(d,e;q)$ described above
are quasimock theta functions. See \cite[page 240]{BLO1} for a definition.

Andrews, Garvan, and Liang \cite{AGL} found combinatorial interpretations
of the mod $5$ and $7$ congruences in Theorem \ref{TheoremSptCongruences} 
in terms of weighted counts of special vector partitions called 
S-partitions. This was done by adding an extra variable to the generating
function of the spt-function. In particular they defined
\begin{align}
	\mbox{\rm S}(z,q)
	&=
	\sum_{n=1}^\infty \frac{q^n \aqprod{q^{n+1}}{q}{\infty}}
		{\aqprod{zq^n}{q}{\infty}\aqprod{z^{-1}q^n}{q}{\infty}}
	\\
	&= \sum_{n=1}^\infty\sum_{m=-\infty}^\infty
		N_S(m,n)z^mq^n.
\end{align}
One then finds the congruences in (\ref{TheoremSptCongruence5np4})
and (\ref{TheoremSptCongruence7np5}) follow by showing the coefficients
of $q^{5n+4}$ in S$(\zeta_5,q)$ and $q^{7n+5}$ in S$(\zeta_7,q)$
are zero, where $\zeta_5$ is a primitive fifth root of unity and
$\zeta_7$ is a primitive seventh root of unity. This is the approach we 
take to prove the congruences for 
$\sptBar{}{n}$, $\sptBar{1}{n}$, $\sptBar{2}{n}$, and $\Mspt{n}$,
and their combinatorial refinements.

In the next section we give two variable generalizations of the generating
functions (\ref{Section1SptBar}) -- (\ref{Section1M2Spt}), introduce various
ranks and cranks, and state numerous
identities for these functions. At the end of the next section we describe
the plan for the remainder of the paper.

\section{Statement of Results and Preliminaries}\label{sec:statementOfResults}

In this paper we give alternate proofs of the congruences in Theorem 
\ref{TheoremSptBarCongruences} and prove the congruences
of Theorems \ref{TheoremM2SptBarCongruences} and 
\ref{TheoremSptBar1CongruenceOdd} as well as giving combinatorial 
interpretations. We consider two variable 
generalizations of the generating functions 
from the introduction. We set
\begin{align}
 	\SB(z,q)
	&= \sum_{n=1}^\infty 
			\frac{q^n \aqprod{-q^{n+1}}{q}{\infty}\aqprod{q^{n+1}}{q}{\infty}}
			{\aqprod{zq^n}{q}{\infty}\aqprod{z^{-1}q^n}{q}{\infty}} 
	\\
	&= \sum_{n=1}^\infty\sum_{m=-\infty}^\infty N_{\SB}(m,n)z^mq^n
	,
	\\
	\SB_2(z,q)
	&=
	\sum_{n=1}^\infty
		\frac{q^{2n}\aqprod{-q^{2n+1}}{q}{\infty}\aqprod{q^{2n+1}}{q}{\infty}}
			{\aqprod{zq^{2n}}{q}{\infty}\aqprod{z^{-1}q^{2n}}{q}{\infty}}
	\\
	&=
	\sum_{n=1}^\infty\sum_{m=-\infty}^\infty N_{\SB_2}(m,n)z^mq^n
	,
	\\
	\SB_1(z,q)
	&= \sum_{n=0}^\infty 
			\frac{q^{2n+1}\aqprod{-q^{2n+2}}{q}{\infty}\aqprod{q^{2n+2}}{q}{\infty}}
			{\aqprod{zq^{2n+1}}{q}{\infty}\aqprod{z^{-1}q^{2n+1}}{q}{\infty}}
	\\
	&=
		\sum_{n=1}^\infty\sum_{m=-\infty}^\infty N_{\SB_1}(m,n)z^mq^n
	,
	\\
   \STwoB(z,q)
	&=\sum_{n=1}^\infty 
		\frac{q^{2n}\aqprod{q^{2n+2}}{q^2}{\infty}\aqprod{-q^{2n+1}}{q^2}{\infty}}
		{\aqprod{zq^{2n}}{q^2}{\infty}\aqprod{z^{-1}q^{2n}}{q^2}{\infty}}
	\\
	&= \sum_{n=1}^\infty\sum_{m=-\infty}^\infty N_{\STwoB}(m,n)z^mq^n
.
\end{align}

In each two variable generating function we set $z=1$ to recover the generating
functions from the introduction. We see that $\SB(1,q)$ is the 
generating function for $\sptBar{}{n}$, $\SB_2(1,q)$ is the generating function 
for $\sptBar{2}{n}$, $\SB_1(1,q)$ is the generating function for 
$\sptBar{1}{n}$, and $\STwoB(1,q)$ is the generating function for $\Mspt{n}$.

Furthermore we define
\begin{align}
	N_{\SB}(k,t,n) = \sum_{m\equiv k \pmod{t}}N_{\SB}(m,n)
\end{align}
so
\begin{align}
	\sptBar{}{n} &= \sum_{m=-\infty}^\infty N_{\SB}(m,n)
	= \sum_{k=0}^{r-1} N_{\SB}(k,r,n)
\end{align}
for any positive integer $r$.
We similarly define 
\begin{align}
	N_{\SB_1}(k,t,n)
	&=	
	\sum_{m\equiv k \pmod{t}}N_{\SB_1}(m,n)
	,
	\\
	N_{\SB_2}(k,t,n)
	&=
	\sum_{m\equiv k \pmod{t}}N_{\SB_2}(m,n)
	,
	\\
	N_{\STwoB}(k,t,n)
	&=
	\sum_{m\equiv k \pmod{t}}N_{\STwoB}(m,n)
.
\end{align}

We use these series to give another proof of the spt congruences.

First we consider the congruence in (\ref{TheoremSptBarCongruence3n}) 
of Theorem \ref{TheoremSptBarCongruences}. With $\zeta_3$ a primitive third root of unity, we have
\begin{align*}
	\SB(\zeta_3,q)
	&= \sum_{n=1}^\infty\Parans{
			N_{\SB}(0,3,n) 
			+N_{\SB}(1,3,n)\zeta_3
			+N_{\SB}(2,3,n)\zeta_3^2}q^n
\end{align*}

The minimal polynomial for $\zeta_3$ is $1+x+x^2$, and so if
\begin{align*}
	N_{\SB}(0,3,n) 
	+N_{\SB}(1,3,n)\zeta_3
	+N_{\SB}(2,3,n)\zeta_3^2
	&= 0
\end{align*}
then 
\begin{align}\label{proofExplanation1}
	N_{\SB}(0,3,3n)&=N_{\SB}(1,3,3n)=N_{\SB}(2,3,3n).
\end{align}
But if (\ref{proofExplanation1}) holds, then 
\begin{align}
	\sptBar{}{3n} &= 3N_{\SB}(k,3,3n) \hspace{20pt}\mbox{ for } k=0,1,2
\end{align}
and so clearly $\sptBar{}{3n}\equiv 0\pmod{3}$.
That is to say, if we show the coefficient of $q^{3n}$ in $\SB(\zeta_3,q)$ 
to be zero, then we have proved the first congruence in Theorem 
\ref{TheoremSptBarCongruences}, and the stronger result (\ref{proofExplanation1}).

In the same fashion, the congruences 
(\ref{TheoremSptBar1Congruence3n}) and (\ref{TheoremSptBar1Congruence5n}),
will follow by showing the coefficients of $q^{3n}$
in $\SB_1(\zeta_3,q)$ and the coefficients of $q^{5n}$
in $\SB_1(\zeta_5,q)$ are zero. The congruences
(\ref{TheoremSptBar2Congruence3n}), (\ref{TheoremSptBar2Congruence3np1}),
and (\ref{TheoremSptBar2Congruence5np3}) will follow by
showing the coefficients of $q^{3n}$ and $q^{3n+1}$ in $\SB_2(\zeta_3,q)$
and the coefficients of $q^{5n+3}$ in $\SB_2(\zeta_5,q)$ are zero.
The congruences in Theorem \ref{TheoremM2SptBarCongruences} will follow by showing 
the coefficients of $q^{3n+1}$ in $\STwoB(\zeta_3,q)$ and the coefficients 
of $q^{5n+1}$ and $q^{5n+3}$ in $\STwoB(\zeta_5,q)$ are zero.

To this end, we will express the series $\SB(z,q)$, $\SB_1(z,q)$,
$\SB_2(z,q)$, $\STwoB(z,q)$ as the difference of the generating functions for 
certain ranks and cranks. In \cite{AGL} Andrews, the first author, and Liang found that 
S$(z,q)$ could be expressed in terms of the difference of the rank and crank of
a partition. We recall the rank of a partition is the largest part minus the number 
of parts. The crank of a partition is the largest part if there are no ones and
otherwise is the number of parts larger than the number of ones 
minus the number of ones.

As in \cite{BLO2}, for an overpartition $\pi$ of $n$ we define a residual crank 
of $\pi$ by the crank of the subpartition of $\pi$ consisting of the non-overlined 
parts of $\pi$. We let $\overline{M}(m,n)$ denote the number of overpartitions of 
$n$ with this residual crank equal to $m$. The generating function for 
$\overline{M}(m,n)$ is then given by
\begin{align}\label{OverpartitionResidualCrankDef}
	\sum_{n=0}^\infty\sum_{m=-\infty}^\infty \overline{M}(m,n)z^mq^n
	&= \frac{\aqprod{-q}{q}{\infty}\aqprod{q}{q}{\infty}}
		{\aqprod{zq}{q}{\infty}\aqprod{z^{-1}q}{q}{\infty}}.
\end{align}
Of course this interpretation is not quite correct, as  
$\frac{\aqprod{q}{q}{\infty}}{\aqprod{zq,z^{-1}q}{q}{\infty}}$
does not agree at $q^1$ for the crank of the partition consisting of a single one. Thus
the interpretation of this residual crank is not quite correct for overparititons
whose non-overlined part consists of a single one.

As in \cite{BLO2} and others, for an overpartition $\pi$ of $n$ we define the 
Dyson rank of $\pi$ to be the largest part minus the number of parts of $\pi$. 
Let $\overline{N}(m,n)$ denote the number of overpartitions of $n$ with Dyson rank 
equal to $m$. As in Proposition 1.1 and the proof of Proposition 3.2 of 
\cite{Lovejoy1}, the generating function for $\overline{N}(m,n)$ is given by
\begin{align}
	\sum_{n=0}^\infty\sum_{m=-\infty}^\infty \overline{N}(m,n)z^mq^n
	&= \sum_{n=0}^\infty\frac{\aqprod{-1}{q}{n}q^{n(n+1)/2}}
		{\aqprod{zq}{q}{n}\aqprod{z^{-1}q}{q}{n}}
	\\\label{OverpartitionDysonRank}
	&= \frac{\aqprod{-q}{q}{\infty}}{\aqprod{q}{q}{\infty}}
		\Parans{1+
			2\sum_{n=1}^\infty\frac{(1-z)(1-z^{-1})(-1)^nq^{n^2+n}}
				{(1-zq^n)(1-z^{-1}q^n)}
		}
.
\end{align}
The second equality is obtained by Watson's transformation.

We define another residual crank as follows.
For a partitions $\pi$ of $n$ with distinct odd parts we take 
the crank of the partition $\frac{\pi_e}{2}$ obtained by taking the 
subpartition $\pi_e$, of the even parts of 
$\pi$, and halving each part of $\pi_e$.
We let $M2(m,n)$ denote the number of partitions $\pi$ of $n$ with distinct odd parts 
and such that the partition $\frac{\pi_e}{2}$ has crank $m$. 
Then the generating function for $M2$ is given by
\begin{align}\label{M2Crank}
	\sum_{n=0}^\infty\sum_{m=-\infty}^\infty M2(m,n)z^mq^n
	&= 
		\frac{\aqprod{-q}{q^2}{\infty}\aqprod{q^2}{q^2}{\infty}}
			{\aqprod{zq^2}{q^2}{\infty}\aqprod{z^{-1}q^2}{q^2}{\infty}}.
\end{align}
Again this interpretation is not quite correct, here it fails for partitions 
with distinct odd parts whose only even parts are a single two.

We recall the $M_2$-rank of a partition $\pi$ without repeated odd
parts is given by
\begin{align}
	M_2\mbox{-rank} = \left\lceil\frac{l(\pi)}{2} \right\rceil - \#(\pi),
\end{align}
where $l(\pi)$ is the largest part of $\pi$ and $\#(\pi)$ is the number of
parts of $\pi$. The $M_2$-rank was introduced by Berkovich and the first author
in \cite{BG2}. We let $N2(m,n)$ denote the number of partitions of $n$ with
distinct odd parts 
and $M_2$-rank $m$. By Lovejoy and Osburn \cite{LO2} the generating function 
for $N2$ is given by
\begin{align}
	\sum_{n=0}^\infty\sum_{m=-\infty}^\infty N2(m,n)z^mq^n
	&= \sum_{n=0}^\infty q^{n^2}
		\frac{\aqprod{-q}{q^2}{n}}
			{\aqprod{zq^2}{q^2}{n}\aqprod{z^{-1}q^2}{q^2}{n}}.
\end{align}

We set
\begin{align}
	\overline{N}(k,t,n)
	&=	
	\sum_{m\equiv k \pmod{t}}\overline{N}(m,n)
	,
	\\
	\overline{M}(k,t,n)
	&=	
	\sum_{m\equiv k \pmod{t}}\overline{M}(m,n)
	,
	\\
	N2(k,t,n)
	&=
	\sum_{m\equiv k \pmod{t}}N2(m,n)
	,
	\\
	M2(k,t,n)
	&=
	\sum_{m\equiv k \pmod{t}}M2(m,n)
	.
\end{align}
We see $\overline{N}(-m,n)=\overline{N}(m,n)$ and so
$\overline{N}(k,t,n)=\overline{N}(t-k,t,n)$. Similarly we have
$\overline{M}(k,t,n)=\overline{M}(t-k,t,n)$,
$N2(k,t,n)=N2(t-k,t,n)$, and
$M2(k,t,n)=M2(t-k,t,n)$.

We will show the following:
\begin{theorem}\label{SptBarRankCrank}
\begin{align}
	(1-z)(1-z^{-1})\SB(z,q)
	&= 
		\sum_{n=0}^\infty\sum_{m=-\infty}^\infty 
			(\overline{N}(m,n)-\overline{M}(m,n))z^mq^n
.
\end{align}
\end{theorem}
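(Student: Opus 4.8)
The plan is to reduce Theorem~\ref{SptBarRankCrank} to a single basic hypergeometric identity and then prove that identity with the Bailey transform. Write $\overline{N}(z,q)=\sum_{n\ge 0}\sum_{m}\overline{N}(m,n)z^{m}q^{n}$ and $\overline{M}(z,q)=\sum_{n\ge 0}\sum_{m}\overline{M}(m,n)z^{m}q^{n}$, so that $\overline{M}(z,q)=\aqprod{-q}{q}{\infty}\aqprod{q}{q}{\infty}\big/\big(\aqprod{zq}{q}{\infty}\aqprod{z^{-1}q}{q}{\infty}\big)$ by \eqref{OverpartitionResidualCrankDef}. First I would simplify the left side of the theorem: in the series for $\SB(z,q)$ use $\aqprod{-q^{n+1}}{q}{\infty}=\aqprod{-q}{q}{\infty}/\aqprod{-q}{q}{n}$, $\aqprod{q^{n+1}}{q}{\infty}=\aqprod{q}{q}{\infty}/\aqprod{q}{q}{n}$ and $\aqprod{zq^{n}}{q}{\infty}=\aqprod{zq}{q}{\infty}/\aqprod{zq}{q}{n-1}$ (similarly with $z^{-1}$), absorb the factor $(1-z)(1-z^{-1})$ via $(1-z)\aqprod{zq}{q}{n-1}=\aqprod{z}{q}{n}$ and $(1-z^{-1})\aqprod{z^{-1}q}{q}{n-1}=\aqprod{z^{-1}}{q}{n}$, and collect $\aqprod{q}{q}{n}\aqprod{-q}{q}{n}=\aqprod{q^2}{q^2}{n}$. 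This gives
\begin{equation*}
	(1-z)(1-z^{-1})\SB(z,q)=\overline{M}(z,q)\sum_{n=1}^{\infty}\frac{q^{n}\aqprod{z}{q}{n}\aqprod{z^{-1}}{q}{n}}{\aqprod{q^2}{q^2}{n}}.
\end{equation*}
Since the $n=0$ term of the series equals $1$, adjoining it contributes exactly $\overline{M}(z,q)$, so the theorem is equivalent to $\overline{N}(z,q)=\overline{M}(z,q)\sum_{n\ge 0}q^{n}\aqprod{z}{q}{n}\aqprod{z^{-1}}{q}{n}/\aqprod{q^2}{q^2}{n}$.

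To prove this I would use the Bailey transform for the Bailey pair relative to $a=1$ given by $\beta_{n}=1/\aqprod{q^2}{q^2}{n}$, $\alpha_{0}=1$ and $\alpha_{n}=2(-1)^{n}q^{n^{2}}$ for $n\ge 1$; the defining relation $\beta_{n}=\sum_{r=0}^{n}\alpha_{r}\big/\big(\aqprod{q}{q}{n-r}\aqprod{q}{q}{n+r}\big)$ is verified directly, and this pair is classical (it occurs in Slater's list). Applying the transform with $\delta_{n}=q^{n}\aqprod{z}{q}{n}\aqprod{z^{-1}}{q}{n}$ and $\gamma_{n}=\sum_{r\ge n}\delta_{r}\big/\big(\aqprod{q}{q}{r-n}\aqprod{q}{q}{r+n}\big)$ produces $\sum_{n\ge 0}\alpha_{n}\gamma_{n}=\sum_{n\ge 0}\beta_{n}\delta_{n}$, whose right side is exactly the series above. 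The key step is the evaluation of $\gamma_{n}$: substituting $r=n+s$ and splitting the Pochhammer symbols gives
\begin{equation*}
	\gamma_{n}=\frac{q^{n}\aqprod{z}{q}{n}\aqprod{z^{-1}}{q}{n}}{\aqprod{q}{q}{2n}}\sum_{s=0}^{\infty}\frac{\aqprod{zq^{n}}{q}{s}\aqprod{z^{-1}q^{n}}{q}{s}q^{s}}{\aqprod{q}{q}{s}\aqprod{q^{2n+1}}{q}{s}},
\end{equation*}
and the inner ${}_{2}\phi_{1}$ is summable by the $q$-Gauss sum because its argument $q$ equals $q^{2n+1}/(zq^{n}\cdot z^{-1}q^{n})$.

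Carrying out that summation and simplifying with $\aqprod{q}{q}{2n}\aqprod{q^{2n+1}}{q}{\infty}=\aqprod{q}{q}{\infty}$ and $\aqprod{z}{q}{n}\aqprod{zq^{n+1}}{q}{\infty}=\aqprod{z}{q}{\infty}/(1-zq^{n})$ (and its $z^{-1}$ analogue) yields
\begin{equation*}
	\gamma_{n}=\frac{(1-z)(1-z^{-1})\aqprod{zq}{q}{\infty}\aqprod{z^{-1}q}{q}{\infty}\,q^{n}}{\aqprod{q}{q}{\infty}^{2}(1-zq^{n})(1-z^{-1}q^{n})}.
\end{equation*}
Feeding this and the values of the $\alpha_{n}$ into $\sum_{n\ge 0}\alpha_{n}\gamma_{n}=\sum_{n\ge 0}\beta_{n}\delta_{n}$ (the factor $(1-z)(1-z^{-1})$ in $\gamma_{0}$ cancelling against $\alpha_{0}$) gives
\begin{equation*}
	\frac{\aqprod{zq}{q}{\infty}\aqprod{z^{-1}q}{q}{\infty}}{\aqprod{q}{q}{\infty}^{2}}\Parans{1+2(1-z)(1-z^{-1})\sum_{n=1}^{\infty}\frac{(-1)^{n}q^{n^{2}+n}}{(1-zq^{n})(1-z^{-1}q^{n})}}=\sum_{n=0}^{\infty}\frac{q^{n}\aqprod{z}{q}{n}\aqprod{z^{-1}}{q}{n}}{\aqprod{q^2}{q^2}{n}}.
\end{equation*}
By the Watson-transformed evaluation \eqref{OverpartitionDysonRank} of $\overline{N}(z,q)$, the parenthesized factor on the left equals $\aqprod{q}{q}{\infty}\overline{N}(z,q)/\aqprod{-q}{q}{\infty}$; substituting this and clearing products recovers the equivalent identity $\overline{N}(z,q)=\overline{M}(z,q)\sum_{n\ge 0}q^{n}\aqprod{z}{q}{n}\aqprod{z^{-1}}{q}{n}/\aqprod{q^2}{q^2}{n}$, and hence Theorem~\ref{SptBarRankCrank}.

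The step I expect to be the main obstacle is the evaluation of $\gamma_{n}$ — recognizing that the inner series falls exactly under the $q$-Gauss sum, and then carrying out the infinite-product bookkeeping to reach its closed form — together with pinning down the correct Bailey pair and verifying it. The remaining manipulations are routine $q$-series algebra, and one need only be a little careful with the $n=0$ term, where the summand $-\overline{M}(z,q)$ is produced.
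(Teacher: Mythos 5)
Your proof is correct and follows essentially the same route as the paper: the same rewriting of $(1-z)(1-z^{-1})\SB(z,q)$ as the crank product times $\sum_{n\ge1} q^n\aqprod{z}{q}{n}\aqprod{z^{-1}}{q}{n}/\aqprod{q^2}{q^2}{n}$, the same Slater Bailey pair $\beta_n=1/\aqprod{q^2}{q^2}{n}$, $\alpha_n=2(-1)^nq^{n^2}$, and the same comparison with the Watson-transformed form \eqref{OverpartitionDysonRank} of the rank generating function. The only difference is presentational: you derive the needed instance of the limiting Bailey's Lemma from the Bailey transform plus the $q$-Gauss evaluation of $\gamma_n$, whereas the paper simply invokes the limiting form of Bailey's Lemma with $\rho_1=z$, $\rho_2=z^{-1}$.
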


\begin{theorem}\label{MSptRankCrank}
\begin{align}
	(1-z)(1-z^{-1})\STwoB(z,q)
	&= 
		\sum_{n=0}^\infty\sum_{m=-\infty}^\infty 
			(N2(m,n)-M2(m,n))z^mq^n
.
\end{align}
\end{theorem}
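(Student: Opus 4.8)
The plan is to establish the identity
\begin{align*}
	(1-z)(1-z^{-1})\STwoB(z,q)
	&=
		\sum_{n=0}^\infty\sum_{m=-\infty}^\infty
			(N2(m,n)-M2(m,n))z^mq^n
\end{align*}
by showing that both sides, as rational functions in $z$ over the ring of formal power series in $q$, agree. Starting from the definition of $\STwoB(z,q)$, I would multiply by $(1-z)(1-z^{-1})$ and absorb these factors into the $q$-Pochhammer denominators, writing
\begin{align*}
	(1-z)(1-z^{-1})\STwoB(z,q)
	&=\sum_{n=1}^\infty
		\frac{q^{2n}\aqprod{q^{2n+2}}{q^2}{\infty}\aqprod{-q^{2n+1}}{q^2}{\infty}}
		{\aqprod{zq^{2n}}{q^2}{\infty}\aqprod{z^{-1}q^{2n}}{q^2}{\infty}}(1-z)(1-z^{-1}).
\end{align*}
The factor $(1-z)(1-z^{-1})$ combines with $\aqprod{zq^{2n}}{q^2}{\infty}\aqprod{z^{-1}q^{2n}}{q^2}{\infty}$ to produce, up to a sign and a power of $z$, the symmetric product $\aqprod{zq^{2n}}{q^2}{\infty}\aqprod{z^{-1}q^{2n}}{q^2}{\infty}$ shifted so that the base index begins at $0$; equivalently it introduces the two "missing" factors $(1-z)$ and $(1-z^{-1})$ that convert $\aqprod{zq^{2n}}{q^2}{\infty}$ into $\aqprod{z}{q^2}{n}^{-1}$-type corrections. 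After this rewriting the summand becomes a ratio whose denominator is $\aqprod{z}{q^2}{\infty}\aqprod{z^{-1}q^2}{q^2}{\infty}$ times a finite correction, which is precisely the shape appearing in the crank generating function \eqref{M2Crank}.

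Next I would invoke the appropriate Bailey pair / Bailey chain machinery, exactly as signalled in the abstract. The generating function for $N2(m,n)$ in \eqref{M2Crank}'s sibling display is a sum $\sum_{n\ge 0} q^{n^2}\aqprod{-q}{q^2}{n}\big/\big(\aqprod{zq^2}{q^2}{n}\aqprod{z^{-1}q^2}{q^2}{n}\big)$, which is manifestly one side of a limiting case of Bailey's lemma with base $q^2$: it is $\sum_n \beta_n q^{n^2}$ for the Bailey pair relative to $a=1$ with $\beta_n = \aqprod{-q}{q^2}{n}\big/\big(\aqprod{zq^2}{q^2}{n}\aqprod{z^{-1}q^2}{q^2}{n}\big)$ and corresponding $\alpha_n$. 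Feeding this $(\alpha_n,\beta_n)$ pair into Bailey's lemma once more, or applying the weak form (the limiting $\rho_i\to\infty$ case) produces a new identity; comparing it against the $q$-hypergeometric form of $(1-z)(1-z^{-1})\STwoB(z,q)$ obtained in the previous paragraph, together with the product evaluation of $M2$'s generating function in \eqref{M2Crank}, yields the claimed difference $N2 - M2$. In practice I would follow the template already used for Theorem \ref{SptBarRankCrank}: there one has the analogous decomposition with $\overline N$ and $\overline M$, and the present theorem is the "$M_2$, base $q^2$, distinct odd parts" analogue, with Watson's transformation (which gave the second form of \eqref{OverpartitionDysonRank}) replaced by its base-$q^2$ counterpart applied to the $N2$ series.

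The main obstacle I anticipate is bookkeeping rather than conceptual: correctly tracking the base change $q\mapsto q^2$ together with the half-integer shift $q\mapsto q^{1/2}$ implicit in the substitution "$d=0$, $e=1/q$, $q=q^2$" that defined $\STwoB$, and making sure the finite $q$-Pochhammer corrections coming from $(1-z)(1-z^{-1})$ match exactly the discrepancy between the honest crank of $\pi_e/2$ and the product on the right of \eqref{M2Crank} (recall the stated caveat that the crank interpretation fails when the only even part is a single $2$). Concretely, one must verify that the generating function identity holds not just formally but with the boundary term at $n=0$ handled correctly, since the left side's sum starts at $n=1$ while the right side includes $n=0$ contributions. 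I would check low-order coefficients in $q$ directly (through $q^{10}$, say) as a consistency test, and then give the clean derivation via the Bailey pair, writing out the single application of Bailey's lemma that converts the $N2$-series into the $q^{2n}$-series defining $\STwoB$ after multiplication by $(1-z)(1-z^{-1})$.
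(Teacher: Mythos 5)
Your outline does track the paper's own route---base-$q^2$ Watson transformation on the $N2$ series, pulling the infinite products out of $\STwoB(z,q)$, and an application of Bailey's Lemma---but the step you leave unspecified is exactly the technical heart of the proof, and the version of it you do write down would fail. In the paper one first rewrites
\begin{align*}
\STwoB(z,q)
&=\frac{\aqprod{-q,q^2}{q^2}{\infty}}{\aqprod{z,z^{-1}}{q^2}{\infty}}
\left(\sum_{n=0}^\infty\frac{\aqprod{z,z^{-1}}{q^2}{n}q^{2n}}{\aqprod{-q,q^2}{q^2}{n}}-1\right),
\end{align*}
where the subtracted $n=0$ term is precisely $\frac{1}{(1-z)(1-z^{-1})}$ times the $M2$ crank product of \eqref{M2Crank} (you correctly flag this boundary issue). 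The remaining sum must then be evaluated in closed form, and that is done by the limiting case of Bailey's Lemma with $\rho_1=z$, $\rho_2=z^{-1}$ applied to an explicit, $z$-free Bailey pair relative to $(1,q^2)$: namely (a Slater pair with $q\mapsto q^2$) $\alpha_0=1$, $\alpha_n=(-1)^nq^{2n^2}(q^n+q^{-n})$ for $n\ge1$, and $\beta_n=1/\aqprod{-q,q^2}{q^2}{n}$. This produces the Lerch-type sum $\frac{\aqprod{zq^2,z^{-1}q^2}{q^2}{\infty}}{\aqprod{q^2}{q^2}{\infty}^2}\bigl(1+\sum_{n\ge1}\frac{(1-z)(1-z^{-1})(-1)^nq^{2n^2+2n}(q^n+q^{-n})}{(1-zq^{2n})(1-z^{-1}q^{2n})}\bigr)$, which matches term by term the Watson transform of the $N2$ generating function, and the theorem falls out.

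The gap in your proposal is that you never identify such a pair, and the one you name---$\beta_n=\aqprod{-q}{q^2}{n}\big/\bigl(\aqprod{zq^2}{q^2}{n}\aqprod{z^{-1}q^2}{q^2}{n}\bigr)$ with an unspecified ``corresponding $\alpha_n$''---cannot play this role: in Bailey's Lemma the variables $z,z^{-1}$ must enter as the parameters $\rho_1,\rho_2$ on the outside, not inside $\beta_n$, and a $z$-dependent $\beta_n$ of this shape has no usable explicit $\alpha_n$; ``feeding it into Bailey's Lemma once more'' does not convert the $N2$ series into the $q^{2n}$-series coming from $\STwoB$. (Equivalently, as the paper remarks, one could replace the Bailey-pair step by a second application of Watson's transformation, but some such concrete input is indispensable.) Two smaller points: the caveat about the combinatorial crank of $\pi_e/2$ failing when the only even part is a single $2$ is irrelevant here, since the theorem and its proof use $M2(m,n)$ only through the product \eqref{M2Crank}, so there are no ``finite Pochhammer corrections'' to reconcile; and no $q\mapsto q^{1/2}$ bookkeeping arises once one works directly with the series defining $\STwoB(z,q)$.
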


\begin{theorem}\label{SptBar2RankCrank}
\begin{align}
	(1-z)(1-z^{-1})\SB_2(z,q)
	=&  
		\sum_{n=0}^\infty\sum_{m=-\infty}^\infty 
			\Parans{\frac{\overline{N}(m,n)}{2}-\overline{M}(m,n)}z^mq^n
		\nonumber\\		
		&
		+
		\frac{\aqprod{-q}{q}{\infty}}{\aqprod{q}{q}{\infty}}  
		\Parans{
			\frac{1}{2}
			+
			\sum_{n=1}^\infty 
				\frac{(1-z)(1-z^{-1})(-1)^nq^n}{(1-zq^n)(1-z^{-1}q^n)}			
		}
.
\end{align}
\end{theorem}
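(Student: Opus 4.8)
The plan is to realize $\SB_2(z,q)$ as the ``even-index part'' of $\SB(z,q)$ and then combine Theorem~\ref{SptBarRankCrank} with one further application of Bailey's Lemma. Using $\aqprod{q^{2n+1}}{q}{\infty}=\aqprod{q}{q}{\infty}/\aqprod{q}{q}{2n}$, the analogous identities for $\aqprod{-q^{2n+1}}{q}{\infty}$, $\aqprod{zq^{2n}}{q}{\infty}$, $\aqprod{z^{-1}q^{2n}}{q}{\infty}$, and $(1-z)/\aqprod{z}{q}{\infty}=1/\aqprod{zq}{q}{\infty}$, I would first rewrite $(1-z)(1-z^{-1})\SB_2(z,q)$ in terms of the residual-crank series $\overline{M}(z,q):=\aqprod{-q}{q}{\infty}\aqprod{q}{q}{\infty}/(\aqprod{zq}{q}{\infty}\aqprod{z^{-1}q}{q}{\infty})=\sum_{n,m}\overline{M}(m,n)z^mq^n$ of \eqref{OverpartitionResidualCrankDef}, obtaining
\begin{equation*}
(1-z)(1-z^{-1})\SB_2(z,q)
= \overline{M}(z,q)\sum_{n=1}^\infty
\frac{q^{2n}\aqprod{z}{q}{2n}\aqprod{z^{-1}}{q}{2n}}{\aqprod{q}{q}{2n}\aqprod{-q}{q}{2n}} .
\end{equation*}
The same manipulation for $\SB(z,q)$ gives $(1-z)(1-z^{-1})\SB(z,q)=\overline{M}(z,q)\sum_{m\ge1}q^m c_m$ with $c_m:=\aqprod{z}{q}{m}\aqprod{z^{-1}}{q}{m}/(\aqprod{q}{q}{m}\aqprod{-q}{q}{m})$, so the sum attached to $\SB_2$ is the subsum over even $m$. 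Writing $\sum_{m\,\mathrm{even}}q^m c_m=\tfrac12\sum_{m\ge1}q^m c_m+\tfrac12\sum_{m\ge1}(-q)^m c_m$, invoking Theorem~\ref{SptBarRankCrank} for the first piece, and adding and subtracting the $m=0$ term ($c_0=1$) in the second, I get
\begin{equation*}
(1-z)(1-z^{-1})\SB_2(z,q)
= \tfrac12\sum_{n,m}\bigl(\overline{N}(m,n)-\overline{M}(m,n)\bigr)z^mq^n
+ \tfrac12\,\overline{M}(z,q)\sum_{m\ge0}(-q)^m c_m
- \tfrac12\,\overline{M}(z,q).
\end{equation*}

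It remains to compute $\overline{M}(z,q)\sum_{m\ge0}(-q)^m c_m$. Writing $(-q)^m c_m=\aqprod{z}{q}{m}\aqprod{z^{-1}}{q}{m}\,q^m\cdot(-1)^m/\aqprod{q^2}{q^2}{m}$, I would apply Bailey's Lemma with $\rho_1=z$, $\rho_2=z^{-1}$ (so $aq/\rho_1\rho_2=q$) and the Bailey pair relative to $1$ given by $\beta_m=(-1)^m/\aqprod{q^2}{q^2}{m}$, $\alpha_0=1$, $\alpha_m=2(-1)^m$ for $m\ge1$; that $(\alpha_m,\beta_m)$ is a Bailey pair follows from the alternating Gaussian-binomial evaluation $\sum_{j}(-1)^j\qbin{2N}{j}=\aqprod{q}{q^2}{N}$, and it is obtained from the pair behind Theorem~\ref{SptBarRankCrank} by dropping the factor $q^{m^2}$ from $\alpha_m$. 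Bailey's Lemma then yields
\begin{equation*}
\sum_{m\ge0}(-q)^m c_m
= \frac{\aqprod{zq}{q}{\infty}\aqprod{z^{-1}q}{q}{\infty}}{\aqprod{q}{q}{\infty}^2}
\Parans{1+2\sum_{n=1}^\infty\frac{(1-z)(1-z^{-1})(-1)^nq^n}{(1-zq^n)(1-z^{-1}q^n)}},
\end{equation*}
the product factor also being visible from the $q$-Gauss evaluation $\sum_{m\ge0}(-q)^m c_m=\aqprod{-qz}{q}{\infty}\aqprod{-q/z}{q}{\infty}/\aqprod{-q}{q}{\infty}^2$. Multiplying by $\overline{M}(z,q)$, the finite $z$-products cancel and $\overline{M}(z,q)\sum_{m\ge0}(-q)^m c_m=\frac{\aqprod{-q}{q}{\infty}}{\aqprod{q}{q}{\infty}}\bigl(1+2\sum_{n\ge1}\frac{(1-z)(1-z^{-1})(-1)^nq^n}{(1-zq^n)(1-z^{-1}q^n)}\bigr)$.

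Putting this back, the two copies of $-\tfrac12\overline{M}(z,q)$ combine to $-\overline{M}(z,q)=-\sum_{n,m}\overline{M}(m,n)z^mq^n$, the term $\tfrac12\sum_{n,m}\overline{N}(m,n)z^mq^n$ supplies the $\frac{\overline{N}(m,n)}{2}$, and $\tfrac12\overline{M}(z,q)\sum_{m\ge0}(-q)^m c_m$ becomes exactly $\frac{\aqprod{-q}{q}{\infty}}{\aqprod{q}{q}{\infty}}\bigl(\tfrac12+\sum_{n\ge1}\frac{(1-z)(1-z^{-1})(-1)^nq^n}{(1-zq^n)(1-z^{-1}q^n)}\bigr)$, which is the asserted formula. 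The one genuinely new ingredient is the Bailey pair of the third step — i.e., deciding which pair produces the ``alternating companion'' of $\SB$, equivalently the product-to-bilateral-series identity implicit in the $q$-Gauss comparison above; the main obstacle is verifying that identity (either the Gaussian-binomial evaluation used to certify the Bailey pair, or a partial-fraction argument in $z$). Everything else is cancellation of $q$-Pochhammer symbols together with the even/odd split of the summation index, which is precisely what forces the extra $\frac{\aqprod{-q}{q}{\infty}}{\aqprod{q}{q}{\infty}}(\,\cdots)$ term into the identity.
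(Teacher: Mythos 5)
Your proposal is correct and follows essentially the same route as the paper: the paper forms the linear combination of the Slater pair E(1) with the pair $\alpha_n=2(-1)^n$, $\beta_n=(-1)^n/\aqprod{-q,q}{q}{n}$ to get a Bailey pair supported on even indices and applies Bailey's Lemma once, whereas you apply Bailey's Lemma to each pair separately (the first already packaged as Theorem \ref{SptBarRankCrank}) and average via $\tfrac12(1+(-1)^m)$ — the same computation by linearity, with the same two Bailey pairs and the same bookkeeping of the $m=0$ term producing the extra $\frac{\aqprod{-q}{q}{\infty}}{\aqprod{q}{q}{\infty}}(\cdots)$ summand.
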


\begin{theorem}\label{SptBar1RankCrank}
\begin{align}
	(1-z)(1-z^{-1})\SB_1(z,q)
	=& 
		\sum_{n=0}^\infty\sum_{m=-\infty}^\infty 
			\frac{\overline{N}(m,n)}{2}z^mq^n
		-
		\frac{\aqprod{-q}{q}{\infty}}{\aqprod{q}{q}{\infty}}  
		\Parans{
			\frac{1}{2}
			+
			\sum_{n=1}^\infty 
				\frac{(1-z)(1-z^{-1})(-1)^nq^n}{(1-zq^n)(1-z^{-1}q^n)}			
		}
.
\end{align}
\end{theorem}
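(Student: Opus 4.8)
The plan is to deduce Theorem~\ref{SptBar1RankCrank} from Theorems~\ref{SptBarRankCrank} and~\ref{SptBar2RankCrank} together with the elementary fact that $\SB(z,q)$ splits into its odd- and even-indexed parts. Concretely, in the defining series
\[
	\SB(z,q) = \sum_{n=1}^\infty
		\frac{q^n \aqprod{-q^{n+1}}{q}{\infty}\aqprod{q^{n+1}}{q}{\infty}}
		{\aqprod{zq^n}{q}{\infty}\aqprod{z^{-1}q^n}{q}{\infty}},
\]
I would separate the terms with $n$ odd from those with $n$ even. Writing $n = 2m+1$ in the odd terms turns the $n$-th summand into exactly the $m$-th summand of $\SB_1(z,q)$, and writing $n = 2m$ (with $m\ge 1$) in the even terms turns it into exactly the $m$-th summand of $\SB_2(z,q)$; matching the four $q$-Pochhammer factors on each side confirms this. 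Hence, as formal power series in $q$,
\[
	\SB(z,q) = \SB_1(z,q) + \SB_2(z,q),
\]
and therefore $(1-z)(1-z^{-1})\SB_1(z,q) = (1-z)(1-z^{-1})\SB(z,q) - (1-z)(1-z^{-1})\SB_2(z,q)$.

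Next I would substitute the expressions furnished by Theorem~\ref{SptBarRankCrank} and Theorem~\ref{SptBar2RankCrank} into the right-hand side. Forming the difference of the two rank/crank double sums, the coefficient of $z^m q^n$ becomes $\bigl(\overline{N}(m,n)-\overline{M}(m,n)\bigr)-\bigl(\tfrac12\overline{N}(m,n)-\overline{M}(m,n)\bigr)=\tfrac12\overline{N}(m,n)$, so the residual-crank contributions $\overline{M}(m,n)$ cancel identically, the Dyson-rank contributions collapse to $\tfrac12\overline{N}(m,n)$, and the extra term
\[
	\frac{\aqprod{-q}{q}{\infty}}{\aqprod{q}{q}{\infty}}
	\Parans{\tfrac12 + \sum_{n=1}^\infty \frac{(1-z)(1-z^{-1})(-1)^nq^n}{(1-zq^n)(1-z^{-1}q^n)}}
\]
coming from Theorem~\ref{SptBar2RankCrank} survives with a single minus sign. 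This is precisely the asserted identity. As a sanity check one may verify the $q^0$ coefficients agree: $\sum_m \tfrac12\overline{N}(m,0)z^m = \tfrac12$ cancels the $q^0$-part of the extra term, so both sides vanish at $n=0$.

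Because all the substantive content is carried by Theorems~\ref{SptBarRankCrank} and~\ref{SptBar2RankCrank}, there is essentially no analytic obstacle; the one step that warrants care is the bookkeeping in the parity split — checking that under $n \mapsto 2m+1$ the factors $\aqprod{-q^{n+1}}{q}{\infty}$, $\aqprod{q^{n+1}}{q}{\infty}$, $\aqprod{zq^n}{q}{\infty}$, $\aqprod{z^{-1}q^n}{q}{\infty}$ become the Pochhammer factors in the definition of $\SB_1$ (the $n=1$ term of $\SB$ supplying exactly the $m=0$ term of $\SB_1$), together with the analogous check for $\SB_2$. For completeness I would add a remark that Theorem~\ref{SptBar1RankCrank} could instead be proved directly, mirroring the proof of Theorem~\ref{SptBarRankCrank} via Bailey's Lemma and the difference formula for the Dyson rank of an overpartition from~\cite{LO1}, but the subtraction argument above is shorter and reuses no new machinery.
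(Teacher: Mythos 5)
Your argument is correct, and the decomposition $\SB(z,q)=\SB_1(z,q)+\SB_2(z,q)$ is exactly right: reindexing the odd terms of $\SB$ by $n=2m+1$ reproduces the summands of $\SB_1$ (with the $n=1$ term giving $m=0$) and the even terms $n=2m$, $m\ge 1$, reproduce those of $\SB_2$, so subtracting Theorem~\ref{SptBar2RankCrank} from Theorem~\ref{SptBarRankCrank} gives the claim, with the $\overline{M}(m,n)$ terms cancelling and the rank terms collapsing to $\tfrac12\overline{N}(m,n)$ as you say. This is, however, not the route the paper takes: the authors note in one sentence that ``with $\SB(z,q)$ and $\SB_2(z,q)$ known, we also know $\SB_1(z,q)$,'' but then give an independent derivation, writing $\SB_1(z,q)=\frac{\aqprod{-q,q}{q}{\infty}}{\aqprod{z,z^{-1}}{q}{\infty}}\sum_{n\ge 0}\frac{\aqprod{z,z^{-1}}{q}{2n+1}q^{2n+1}}{\aqprod{-q,q}{q}{2n+1}}$ and applying Bailey's Lemma to the combined Bailey pair with $\alpha_n=(-1)^n(q^{n^2}-1)$ and $\beta_n$ supported on odd $n$. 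What their direct route buys is the closed single-sum formula (\ref{SeriesForS1Bar}),
\begin{align*}
	\SB_1(z,q)
	&= \frac{\aqprod{-q}{q}{\infty}}{(1-z)(1-z^{-1})\aqprod{q}{q}{\infty}}
		\sum_{n=1}^\infty \frac{(1-z)(1-z^{-1})q^n(-1)^n(q^{n^2}-1)}
			{(1-zq^n)(1-z^{-1}q^n)},
\end{align*}
which is invoked later in the proof of Theorem~\ref{TheoremTwoDissectionSBars} to evaluate $\SB_1(i,q)$; your subtraction argument is shorter and reuses only already-proved results, but if adopted one should note that this single-sum form still follows by combining the theorem's statement with the Watson-transformed expression (\ref{OverpartitionDysonRank}) for the Dyson rank generating function.
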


In \cite{LO1} Lovejoy and Osburn determined formulas
for the differences of $\overline{N}(s,\ell,\ell n+d)$ for $\ell=3,5$
and in \cite{LO2} they did the same for $N2(s,\ell,\ell n+d)$.
From these difference formulas, we know the 3-dissection and 5-dissection 
for the generating functions of $\overline{N}(m,n)$ and $N2(m,n)$. 
In particular, we will have the following.

\begin{theorem}\label{ThreeDissectionForDysonRank}
\begin{align}
	\sum_{n=0}^\infty\sum_{m=-\infty}^\infty
	\overline{N}(m,n)\zeta_3^mq^n
	&= \overline{N}_{0,3}(q^3) 
		+ q\overline{N}_{1,3}(q^3) 
		+ q^2\overline{N}_{2,3}(q^3)
\end{align}
where
\begin{align}
	\label{EqN03}
	\overline{N}_{0,3}(q)
	&=
		\frac{\aqprod{q^3}{q^3}{\infty}^4\aqprod{q^2}{q^2}{\infty}}
			{\aqprod{q}{q}{\infty}^2\aqprod{q^6}{q^6}{\infty}^2}
,
	\\
	\label{EqN13}
	\overline{N}_{1,3}(q)
	&=
		2\frac{\aqprod{q^3}{q^3}{\infty}\aqprod{q^6}{q^6}{\infty}}
			{\aqprod{q}{q}{\infty}}
.
\end{align}
\end{theorem}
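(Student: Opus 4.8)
The plan is to read the $3$-dissection off the overpartition rank difference formulas of Lovejoy and Osburn \cite{LO1}, after one short computation with a primitive cube root of unity. First I would note that, since $\overline{N}(-m,n)=\overline{N}(m,n)$ (so $\overline{N}(2,3,n)=\overline{N}(1,3,n)$) and $\zeta_3+\zeta_3^2=-1$, for every $n$ we have
\[
	\sum_{m=-\infty}^\infty \overline{N}(m,n)\zeta_3^m
	= \overline{N}(0,3,n) + \zeta_3\,\overline{N}(1,3,n) + \zeta_3^2\,\overline{N}(2,3,n)
	= \overline{N}(0,3,n) - \overline{N}(1,3,n).
\]
Hence the series on the left of the theorem equals $\sum_{n=0}^\infty \Parans{\overline{N}(0,3,n) - \overline{N}(1,3,n)}q^n$, and collecting the exponents of $q$ according to their residue modulo $3$ identifies
\[
	\overline{N}_{j,3}(q) = \sum_{n=0}^\infty \Parans{\overline{N}(0,3,3n+j) - \overline{N}(1,3,3n+j)}q^n, \qquad j=0,1,2.
\]

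Next I would substitute the explicit evaluations of \cite{LO1}, which give precisely these rank differences $\overline{N}(0,3,3n+j)-\overline{N}(1,3,3n+j)$ in closed form for each $j$. This immediately produces each $\overline{N}_{j,3}(q)$ as a linear combination of infinite products and theta functions.

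The only step requiring real work is reconciling those raw expressions with the compact eta-quotients \eqref{EqN03}--\eqref{EqN13}, and I expect this to be the main (though routine) obstacle: it amounts to standard manipulations with the Jacobi triple product together with elementary identities such as $\aqprod{-q}{q}{\infty}=\aqprod{q^2}{q^2}{\infty}/\aqprod{q}{q}{\infty}$ and the splitting of $\aqprod{q}{q}{\infty}$ into its residue classes modulo $2$ and $3$. If Lovejoy and Osburn already record the differences in product form, this reconciliation is essentially immediate and the theorem follows at once.

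As an independent check — and an alternative route that bypasses \cite{LO1} entirely — one can instead set $z=\zeta_3$ directly in the Lerch-type series \eqref{OverpartitionDysonRank}. Using $(1-\zeta_3)(1-\zeta_3^{-1})=3$, the identity $(1-\zeta_3 q^n)(1-\zeta_3^{-1}q^n)=1+q^n+q^{2n}=(1-q^{3n})/(1-q^n)$, and $\aqprod{-q}{q}{\infty}/\aqprod{q}{q}{\infty}=\aqprod{q^2}{q^2}{\infty}/\aqprod{q}{q}{\infty}^2$, one obtains
\[
	\sum_{n=0}^\infty\sum_{m=-\infty}^\infty \overline{N}(m,n)\zeta_3^m q^n
	= \frac{\aqprod{q^2}{q^2}{\infty}}{\aqprod{q}{q}{\infty}^2}
		\Parans{1 + 6\sum_{n=1}^\infty \frac{(-1)^n q^{n^2+n}(1-q^n)}{1-q^{3n}}},
\]
and a $3$-dissection of the right-hand side — combining the known $3$-dissection of the overpartition generating function $\aqprod{q^2}{q^2}{\infty}/\aqprod{q}{q}{\infty}^2$ with that of the Lambert-type sum, and then simplifying via theta identities — recovers the same three components $\overline{N}_{0,3}(q^3)$, $q\,\overline{N}_{1,3}(q^3)$, $q^2\,\overline{N}_{2,3}(q^3)$.
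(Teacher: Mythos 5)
Your proposal is correct and follows essentially the same route as the paper: reduce $\sum_m \overline{N}(m,n)\zeta_3^m$ to the rank difference $\overline{N}(0,3,n)-\overline{N}(1,3,n)$ via the symmetry $\overline{N}(-m,n)=\overline{N}(m,n)$ and $\zeta_3+\zeta_3^2=-1$, then quote the Lovejoy--Osburn difference formulas and simplify the resulting products. The only detail the paper adds is a remark that the $-1$ appearing in Lovejoy--Osburn's Theorem 1.1 is dropped because of a differing convention for the rank of the empty overpartition.
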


\begin{theorem}\label{FiveDissectionForDysonRank}
\begin{align}
	\sum_{n=0}^\infty\sum_{m=-\infty}^\infty
	\overline{N}(m,n)\zeta_5^mq^n
	&=
		\overline{N}_{0,5}(q^5) 
		+ q\overline{N}_{1,5}(q^5) 
		+ q^2\overline{N}_{2,5}(q^5)
		+ q^3\overline{N}_{3,5}(q^5) 
		+ q^4\overline{N}_{4,5}(q^5)
\end{align}
where
\begin{align}
	\label{EqN05}
	\overline{N}_{0,5}(q)
	&=
		\frac{\aqprod{q^4,q^6}{q^{10}}{\infty}\aqprod{q^5}{q^5}{\infty}^2}
			{\aqprod{q^2,q^3}{q^5}{\infty}^2\aqprod{q^{10}}{q^{10}}{\infty}}
		+
		2(\zeta_5+\zeta_5^{-1})q
			\frac{\aqprod{q^{10}}{q^{10}}{\infty}}
			{\aqprod{q^3,q^4,q^6,q^7}{q^{10}}{\infty}}
,	\\
	\label{EqN35}
	\overline{N}_{3,5}(q)
	&=
		\frac{2(1-\zeta_5-\zeta_5^{-1})\aqprod{q^{10}}{q^{10}}{\infty}}
			{\aqprod{q^2,q^3}{q^{5}}{\infty}}
.
\end{align}
\end{theorem}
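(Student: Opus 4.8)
The plan is to derive the statement above from the rank-difference formulas of Lovejoy and Osburn~\cite{LO1}, exactly as the $3$-dissection of Theorem~\ref{ThreeDissectionForDysonRank} is obtained. First I would collapse the inner sum using the symmetry $\overline{N}(m,n)=\overline{N}(-m,n)$: setting $c=\zeta_5+\zeta_5^{-1}$, and using $1+\zeta_5+\zeta_5^2+\zeta_5^3+\zeta_5^4=0$ so that $\zeta_5^2+\zeta_5^{-2}=-1-c$, one gets
\begin{align*}
	\sum_{m=-\infty}^\infty \overline{N}(m,n)\zeta_5^m
	&= \overline{N}(0,5,n)-\overline{N}(2,5,n)\\
	&\qquad+ c\Parans{\overline{N}(1,5,n)-\overline{N}(2,5,n)} .
\end{align*}
Hence the $q^n$-coefficient of $\sum_{n}\sum_m \overline{N}(m,n)\zeta_5^m q^n$ lies in $\mathbb{Z}[c]$, and its two coordinates in the $\mathbb{Z}$-basis $\{1,c\}$ are the rank differences $\overline{N}(0,5,n)-\overline{N}(2,5,n)$ and $\overline{N}(1,5,n)-\overline{N}(2,5,n)$. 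Splitting $n$ into residue classes modulo $5$ then reduces the whole theorem to identifying, for each $d\in\{0,1,2,3,4\}$, the two generating functions $\sum_{n\ge 0}\Parans{\overline{N}(i,5,5n+d)-\overline{N}(j,5,5n+d)}q^n$; these are, by construction, the coordinates of $\overline{N}_{d,5}(q^5)$ against $1$ and $c$. Only $d=0$ and $d=3$ enter the spt congruences proved later, so only $\overline{N}_{0,5}$ and $\overline{N}_{3,5}$ need be recorded, but the remaining three are found identically.

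Second, I would substitute the explicit evaluations. Lovejoy and Osburn~\cite{LO1} give each relevant $\sum_{n\ge 0}\Parans{\overline{N}(i,5,5n+d)-\overline{N}(j,5,5n+d)}q^n$ as an infinite product, and the trivial identity $\overline{N}(0,5,n)+2\overline{N}(1,5,n)+2\overline{N}(2,5,n)=\overline{p}(n)$ together with the classical $5$-dissection of $\sum_n\overline{p}(n)q^n=\frac{\aqprod{-q}{q}{\infty}}{\aqprod{q}{q}{\infty}}$ can be used to reconcile any difference of normalization. A more self-contained route starts from the Appell--Lerch form on the second line of \eqref{OverpartitionDysonRank}: at $z=\zeta_5$ the prefactor $(1-z)(1-z^{-1})$ becomes $2-c$, and the substitution
\begin{align*}
	\frac{1}{(1-\zeta_5 q^n)(1-\zeta_5^{-1}q^n)}
	= \frac{(1-q^n)(1-\zeta_5^2 q^n)(1-\zeta_5^{-2}q^n)}{1-q^{5n}}
\end{align*}
rewrites the series with denominator $1-q^{5n}$, which separates cleanly by $n\bmod 5$ because $n^2+n\bmod 5$ is constant on each class; one then evaluates the five pieces with the quintuple product identity and theta-function addition formulas. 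Either way, I would collect the coefficient of $q^{5n+d}$ from each ingredient, divide by $q^d$, and replace $q^5$ by $q$; for $d=0$ this should assemble into the two summands of \eqref{EqN05}, and for $d=3$ it should collapse to the single term of \eqref{EqN35}.

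The step I expect to be the main obstacle is this final simplification. Matching the form in which \cite{LO1} records the rank differences --- or the output of the Appell--Lerch dissection --- with the compact products on the right-hand side of Theorem~\ref{FiveDissectionForDysonRank} requires careful applications of the Jacobi triple product, the quintuple product identity, and theta-function addition theorems, together with tracking which factors cancel and which ``telescope''. One should also check that no spurious low-order terms appear; here none do, because \eqref{OverpartitionDysonRank} is an exact generating function for $\overline{N}(m,n)$, unlike the residual-crank series, whose combinatorial interpretations are only approximate in low degree.
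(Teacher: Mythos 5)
Your proposal matches the paper's proof: the paper likewise collapses the sum via $\overline{N}(m,n)=\overline{N}(-m,n)$ to write the coefficient as $\overline{N}(0,5,n)-\overline{N}(2,5,n)+(\zeta_5+\zeta_5^{-1})\bigl(\overline{N}(1,5,n)-\overline{N}(2,5,n)\bigr)$, then quotes the Lovejoy--Osburn rank-difference formulas for the progressions $5n$ and $5n+3$ and simplifies the resulting products. The Appell--Lerch alternative you sketch is not used there, but your primary route is exactly the paper's argument.
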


\begin{theorem}\label{ThreeDissectionForM2Rank}
\begin{align}
	\sum_{n=0}^\infty\sum_{m=-\infty}^\infty
	N2(m,n)\zeta_3^mq^n
	&=N2_{0,3}(q^3) 
		+ q N2_{1,3}(q^3) 
		+ q^2 N2_{2,3}(q^3)
\end{align}
where
\begin{align}
	\label{EqN213}
	N2_{1,3}(q)
	&=
		\frac{\aqprod{q^6}{q^6}{\infty}^4}
			{\aqprod{q^2}{q^2}{\infty}\aqprod{q^3}{q^3}{\infty}\aqprod{q^{12}}{q^{12}}{\infty}}
.
\end{align}
\end{theorem}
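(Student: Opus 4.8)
The plan is to obtain this $3$-dissection from the $M_2$-rank difference formulas of Lovejoy and Osburn \cite{LO2}. First, since $N2(m,n)=N2(-m,n)$ we have $N2(1,3,n)=N2(2,3,n)$, and together with $1+\zeta_3+\zeta_3^2=0$ this gives
\begin{align*}
	\sum_{n=0}^\infty\sum_{m=-\infty}^\infty N2(m,n)\zeta_3^mq^n
	&=
	\sum_{n=0}^\infty\Parans{N2(0,3,n)+N2(1,3,n)\zeta_3+N2(2,3,n)\zeta_3^2}q^n
	\\
	&=
	\sum_{n=0}^\infty\Parans{N2(0,3,n)-N2(1,3,n)}q^n.
\end{align*}
Collecting this series according to the residue of $n$ modulo $3$ is a purely formal operation, and the coefficient of the $q^{3n+1}$-progression is $q\,N2_{1,3}(q^3)$ with
\[
	N2_{1,3}(q)=\sum_{n=0}^\infty\Parans{N2(0,3,3n+1)-N2(1,3,3n+1)}q^n.
\]
The pieces $N2_{0,3}$ and $N2_{2,3}$ are not needed for the congruence $\Mspt{3n+1}\equiv 0\pmod{3}$ that this feeds into, so we do not record them.

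It then suffices to quote the relevant $\ell=3$ case of \cite{LO2}, where each $N2(s,3,3n+d)$, and in particular the difference $N2(0,3,3n+1)-N2(1,3,3n+1)$, is given in closed form. Substituting the $d=1$ difference into the formula for $N2_{1,3}(q)$ and simplifying the resulting product should produce
\[
	N2_{1,3}(q)=\frac{\aqprod{q^6}{q^6}{\infty}^4}{\aqprod{q^2}{q^2}{\infty}\aqprod{q^3}{q^3}{\infty}\aqprod{q^{12}}{q^{12}}{\infty}}.
\]
As a consistency check one can set $z=\zeta_3$ in the generating function for $N2(m,n)$ and use $(1-\zeta_3x)(1-\zeta_3^2x)=\frac{1-x^3}{1-x}$ to rewrite the left-hand side of the dissection as $\sum_{n\ge 0}q^{n^2}\frac{\aqprod{-q}{q^2}{n}\aqprod{q^2}{q^2}{n}}{\aqprod{q^6}{q^6}{n}}=1+q+q^2-q^3+\cdots$, whose $q^{3n+1}$-part $q+q^7+\cdots$ agrees with $q\,N2_{1,3}(q^3)$ for the claimed $N2_{1,3}$.

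The main obstacle I anticipate is bookkeeping rather than anything conceptual: one must reconcile how the $M_2$-rank residue classes are indexed in \cite{LO2} with the conventions used here, possibly combine several of their difference identities for $\ell=3$, and then collapse the resulting theta-function expressions into the single eta-quotient above. If a self-contained argument is wanted instead, one can bypass \cite{LO2} by applying Bailey's Lemma (as done elsewhere in this paper) to an appropriate Bailey pair with base $q^2$ in order to evaluate $\sum_{n\ge 0}q^{n^2}\frac{\aqprod{-q}{q^2}{n}\aqprod{q^2}{q^2}{n}}{\aqprod{q^6}{q^6}{n}}$ as a ratio of theta functions, and then carry out the standard $3$-dissection of that ratio; the difficulty there is exhibiting the correct Bailey pair, after which the dissection of the theta quotient is routine.
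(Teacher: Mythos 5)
Your argument is correct and is essentially the paper's own proof: the paper likewise uses the symmetry $N2(k,3,n)=N2(3-k,3,n)$ together with $1+\zeta_3+\zeta_3^2=0$ to reduce the $q^{3n+1}$-part of $\sum_{m}N2(m,n)\zeta_3^m q^n$ to $\sum_n\bigl(N2(0,3,3n+1)-N2(1,3,3n+1)\bigr)q^n$, and then observes that the resulting identity is a restatement of the $M_2$-rank difference formulas of Lovejoy and Osburn \cite{LO2}. The only difference is cosmetic (your consistency check and the remark about a Bailey-pair alternative), so no further commentary is needed.
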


\begin{theorem}\label{FiveDissectionForM2Rank}
\begin{align}
	\sum_{n=0}^\infty\sum_{m=-\infty}^\infty
	N2(m,n)\zeta_5^mq^n
	&=		N2_{0,5}(q^5) 
		+ q N2_{1,5}(q^5) 
		+ q^2 N2_{2,5}(q^5)
		+ q^3 N2_{3,5}(q^5) 
		+ q^4 N2_{4,5}(q^5)
\end{align}
where
\begin{align}
	\label{EqN215}
	N2_{1,5}(q)
	&=
		\frac{\aqprod{-q^5,q^{10}}{q^{10}}{\infty}}
			{\aqprod{q^2,q^8}{q^{10}}{\infty}}
.	\\
	\label{EqN235}
	N2_{3,5}(q)
	&=
		(\zeta_5+\zeta_5^4)\frac{\aqprod{-q^5,q^{10}}{q^{10}}{\infty}}
			{\aqprod{q^4,q^6}{q^{10}}{\infty}}
.
\end{align}
\end{theorem}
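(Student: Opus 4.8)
The plan is to deduce the five-dissection from the $M_2$-rank difference formulas of Lovejoy and Osburn \cite{LO2} together with elementary arithmetic in $\mathbb{Z}[\zeta_5]$. Write $a = \zeta_5 + \zeta_5^{-1}$ and $b = \zeta_5^2 + \zeta_5^{-2}$, so that $1 + a + b = 0$ and $a,b$ are the two roots of $x^2 + x - 1$. Since $N2(-m,n) = N2(m,n)$ we have $N2(1,5,n) = N2(4,5,n)$ and $N2(2,5,n) = N2(3,5,n)$, and therefore
\begin{align*}
	\sum_{m=-\infty}^\infty N2(m,n)\zeta_5^m
	&= N2(0,5,n) + a\, N2(1,5,n) + b\, N2(2,5,n)
	\\
	&= \bigl(N2(0,5,n) - N2(2,5,n)\bigr) + a\bigl(N2(1,5,n) - N2(2,5,n)\bigr),
\end{align*}
the second line using $b = -1 - a$. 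Thus, for every residue $d$, the coefficient of $q^{5n+d}$ in $\sum_{m,n} N2(m,n)\zeta_5^m q^n$ is a fixed $\mathbb{Z}[\zeta_5]$-linear combination of the two differences $N2(0,5,5n+d) - N2(2,5,5n+d)$ and $N2(1,5,5n+d) - N2(2,5,5n+d)$; in particular the total count $N2(0,5,m) + 2\,N2(1,5,m) + 2\,N2(2,5,m)$ of partitions of $m$ with distinct odd parts never enters, so no auxiliary dissection of $\aqprod{-q}{q^2}{\infty}/\aqprod{q^2}{q^2}{\infty}$ is needed.

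Next I would split according to $n \bmod 5$ and insert the formulas of \cite{LO2} for $\sum_{n\ge 0}\bigl(N2(i,5,5n+d) - N2(j,5,5n+d)\bigr)q^n$, each of which is an explicit theta/eta quotient (with some of these differences identically zero). Reading off the coefficient of $q^{5n+d}$ then produces $N2_{d,5}(q)$ directly. For $d = 1$ one reads off from \cite{LO2} that $N2(1,5,5n+1) = N2(2,5,5n+1)$, so only the first difference survives and $N2_{1,5}(q) = \sum_{n\ge 0}\bigl(N2(0,5,5n+1) - N2(2,5,5n+1)\bigr)q^n$, which explains the absence of $\zeta_5$ in the stated formula; for $d = 3$ one has $N2(0,5,5n+3) = N2(2,5,5n+3)$, so $N2_{3,5}(q) = a\sum_{n\ge 0}\bigl(N2(1,5,5n+3) - N2(2,5,5n+3)\bigr)q^n$, which is the source of the factor $\zeta_5 + \zeta_5^4$. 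The three omitted cases $d = 0, 2, 4$ are handled in exactly the same manner from the same list of differences in \cite{LO2}.

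The remaining step, and the main obstacle, is to recognize the theta/eta quotients furnished by \cite{LO2} as the compact products $\aqprod{-q^5,q^{10}}{q^{10}}{\infty}/\aqprod{q^2,q^8}{q^{10}}{\infty}$ and $\aqprod{-q^5,q^{10}}{q^{10}}{\infty}/\aqprod{q^4,q^6}{q^{10}}{\infty}$. Each such identification is an identity among theta functions of modulus dividing $50$; I would prove these with the Jacobi triple product and the quintuple product identity, and, in any case that remains awkward, by clearing denominators and checking a sufficient number of $q$-expansion coefficients, the two sides being eta-quotients of the same weight and level so that a finite coefficient check suffices. A quick check of the leading terms (the constant term of each product is $1$, matching $N2(0,5,1) - N2(2,5,1) = 1$ and $N2(1,5,3) - N2(2,5,3) = 1$) confirms the normalization, and once the theta identities are in hand the theorem follows.
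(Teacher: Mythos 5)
Your proposal is correct and follows essentially the same route as the paper: the authors likewise use the symmetry $N2(m,n)=N2(-m,n)$ together with $\zeta_5^2+\zeta_5^3=-1-(\zeta_5+\zeta_5^4)$ to reduce the coefficient of $q^{5n+d}$ to the two differences $N2(0,5,\cdot)-N2(2,5,\cdot)$ and $N2(1,5,\cdot)-N2(2,5,\cdot)$, and then cite the Lovejoy--Osburn difference formulas, noting that the stated cases are direct restatements of those results. Your extra remarks on verifying the product identifications are consistent with (and slightly more explicit than) what the paper records.
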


The terms $\overline{N}_{2,3}(q)$, $\overline{N}_{1,5}(q)$, 
$\overline{N}_{2,5}(q)$, $\overline{N}_{4,5}(q)$, $N2_{0,3}(q)$, 
$N2_{2,3}(q)$, $N2_{0,5}(q)$, $N2_{2,5}(q)$, and $N2_{4,5}(q)$ are
also products and series in $q$ and follow from the
difference formulas of Lovejoy and Obsurn in \cite{LO1} and \cite{LO2}. 
However we will not need to use them here.

We will determine dissections for the cranks and other series.
In particular, we will prove the following.

\begin{theorem}\label{ThreeDissectionOverpartionCrank}
\begin{align}
	\sum_{n=0}^\infty\sum_{m=-\infty}^\infty
	\overline{M}(m,n)\zeta_3^mq^n
	&= \overline{M}_{0,3}(q^3) 
		+ q\overline{M}_{1,3}(q^3) 
		+ q^2\overline{M}_{2,3}(q^3)
\end{align}
where
\begin{align}
	\label{EqM03}
	\overline{M}_{0,3}(q)
	&=
		\frac{\aqprod{q^3}{q^3}{\infty}^4\aqprod{q^2}{q^2}{\infty}}
		{\aqprod{q}{q}{\infty}^2\aqprod{q^6}{q^6}{\infty}^2}
,	\\
	\label{EqM13}
	\overline{M}_{1,3}(q)
	&=
		-\frac{\aqprod{q^6}{q^6}{\infty}\aqprod{q^3}{q^3}{\infty}}
			{\aqprod{q}{q}{\infty}}
,	\\
	\label{EqM23}
	\overline{M}_{2,3}(q)
	&=
		-2\frac{\aqprod{q^6}{q^6}{\infty}^4}
			{\aqprod{q^3}{q^3}{\infty}^2\aqprod{q^2}{q^2}{\infty}}
.
\end{align}
\end{theorem}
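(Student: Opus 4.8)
The plan is to evaluate the generating function at $z=\zeta_3$, collapse it to a single eta-quotient, and then $3$-dissect that eta-quotient using the classical $3$-dissections of Ramanujan's theta functions. First, set $z=\zeta_3$ in \eqref{OverpartitionResidualCrankDef}. Since $(1-\zeta_3q^k)(1-\zeta_3^{-1}q^k)=1+q^k+q^{2k}=(1-q^{3k})/(1-q^k)$, we have $\aqprod{\zeta_3q,\zeta_3^{-1}q}{q}{\infty}=\aqprod{q^3}{q^3}{\infty}/\aqprod{q}{q}{\infty}$, and combined with $\aqprod{-q}{q}{\infty}=\aqprod{q^2}{q^2}{\infty}/\aqprod{q}{q}{\infty}$ this reduces \eqref{OverpartitionResidualCrankDef} to
\[
\sum_{n=0}^\infty\sum_{m=-\infty}^\infty \overline{M}(m,n)\zeta_3^mq^n
=\frac{\aqprod{q}{q}{\infty}\aqprod{q^2}{q^2}{\infty}}{\aqprod{q^3}{q^3}{\infty}}.
\]
So it suffices to prove that this eta-quotient has the claimed $3$-dissection.

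The key is the identity $\aqprod{q}{q}{\infty}\aqprod{q^2}{q^2}{\infty}=\varphi(-q)\psi(q)$, where $\varphi(q)=\sum_{n}q^{n^2}$ and $\psi(q)=\sum_{n\ge 0}q^{n(n+1)/2}$ (immediate from $\varphi(-q)=\aqprod{q}{q}{\infty}^2/\aqprod{q^2}{q^2}{\infty}$ and $\psi(q)=\aqprod{q^2}{q^2}{\infty}^2/\aqprod{q}{q}{\infty}$), together with Ramanujan's classical $3$-dissections
\[
\psi(q)=f(q^3,q^6)+q\,\psi(q^9),\qquad \varphi(-q)=\varphi(-q^9)-2q\,f(-q^3,-q^{15}),
\]
where $f(a,b)=\sum_n a^{n(n+1)/2}b^{n(n-1)/2}=\aqprod{-a}{ab}{\infty}\aqprod{-b}{ab}{\infty}\aqprod{ab}{ab}{\infty}$. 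Multiplying these two expansions and dividing by $\aqprod{q^3}{q^3}{\infty}$, one observes that every factor that occurs ($\varphi(-q^9)$, $\psi(q^9)$, $f(q^3,q^6)$, $f(-q^3,-q^{15})$, and $\aqprod{q^3}{q^3}{\infty}$) is a power series in $q^3$, so there is no mixing between residue classes modulo $3$ and the coefficients of $q^0$, $q^1$, $q^2$ can be read off directly:
\begin{align*}
\overline{M}_{0,3}(q) &= \frac{\varphi(-q^3)\,f(q,q^2)}{\aqprod{q}{q}{\infty}}, &
\overline{M}_{2,3}(q) &= \frac{-2\,f(-q,-q^5)\,\psi(q^3)}{\aqprod{q}{q}{\infty}}, \\
\overline{M}_{1,3}(q) &= \frac{\varphi(-q^3)\,\psi(q^3)-2\,f(-q,-q^5)\,f(q,q^2)}{\aqprod{q}{q}{\infty}}.
\end{align*}

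It then remains to put these in product form. Using the product formulas for $\varphi$, $\psi$, $f(a,b)$ together with the elementary regrouping of $\aqprod{q}{q}{\infty}$, $\aqprod{q^2}{q^2}{\infty}$, $\aqprod{q^3}{q^3}{\infty}$ modulo $6$ (for instance $\aqprod{q,q^5}{q^6}{\infty}/\aqprod{q}{q}{\infty}=\aqprod{q^6}{q^6}{\infty}/(\aqprod{q^2}{q^2}{\infty}\aqprod{q^3}{q^3}{\infty})$), one rewrites $\varphi(-q^3)f(q,q^2)$ and $f(-q,-q^5)\psi(q^3)$ each as a single product, giving \eqref{EqM03} and \eqref{EqM23}. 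For $\overline{M}_{1,3}$ the same regrouping shows $\varphi(-q^3)\psi(q^3)=\aqprod{q^3}{q^3}{\infty}\aqprod{q^6}{q^6}{\infty}$ and also $f(-q,-q^5)f(q,q^2)=\aqprod{q^3}{q^3}{\infty}\aqprod{q^6}{q^6}{\infty}$, so the numerator collapses to $(1-2)\aqprod{q^3}{q^3}{\infty}\aqprod{q^6}{q^6}{\infty}$, giving \eqref{EqM13}.

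The main obstacle is the observation in the second paragraph that the extra factor $\aqprod{q^3}{q^3}{\infty}$ in the denominator is harmless: being itself a series in $q^3$, it simply rides along, so the whole $3$-dissection reduces to the known $3$-dissections of $\varphi$ and $\psi$. Everything after that is routine theta-function bookkeeping, the only mild surprise being the coincidence $f(-q,-q^5)f(q,q^2)=\varphi(-q^3)\psi(q^3)$, which produces the cancellation responsible for the clean single-product form of $\overline{M}_{1,3}$ — and hence, ultimately, for the congruence $\sptBar{}{3n}\equiv0\pmod{3}$.
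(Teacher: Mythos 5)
Your proposal is correct, and every step checks out: the reduction at $z=\zeta_3$ to $\aqprod{q}{q}{\infty}\aqprod{q^2}{q^2}{\infty}/\aqprod{q^3}{q^3}{\infty}$ agrees with the paper, the factorization as $\varphi(-q)\psi(q)/\aqprod{q^3}{q^3}{\infty}$ is right, the two classical $3$-dissections you invoke are standard (the $\varphi$ one, in the form $\varphi(q)=\varphi(q^9)+2qf(q^3,q^{15})$, is even quoted by the paper itself in the proof of its Theorem on the extra series), and I verified that all three resulting theta products simplify to the stated eta-quotients, including the cancellation $f(-q,-q^5)f(q,q^2)=\varphi(-q^3)\psi(q^3)=\aqprod{q^3}{q^3}{\infty}\aqprod{q^6}{q^6}{\infty}$ that yields \eqref{EqM13}. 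However, your route is genuinely different from the paper's. After the same reduction to the eta-quotient, the paper does not dissect it directly: it substitutes $q\mapsto q^{1/3}$, normalizes, recognizes the quantity $x(q)=\aqprod{q}{q^2}{\infty}/\aqprod{q^3}{q^6}{\infty}^3$ attached to Ramanujan's cubic continued fraction (Entry 3.3.1(a) of the Lost Notebook), and then quotes the resulting identity as Theorem 2 of Chan's paper, so the actual dissection work is outsourced to the literature. Your argument is self-contained and more elementary, needing only the two textbook $3$-dissections of $\varphi$ and $\psi$ plus bookkeeping of products modulo $6$; what the paper's approach buys is brevity and a pointer to the continued-fraction context, while yours makes the source of the $-1$ and $-2$ coefficients (and hence of $\overline{N}_{0,3}=\overline{M}_{0,3}$) transparent. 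Either is a complete proof.
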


\begin{theorem}\label{FiveDissectionOverpartionCrank}
\begin{align}
	\sum_{n=0}^\infty\sum_{m=-\infty}^\infty
	\overline{M}(m,n)\zeta_5^mq^n
	&=		\overline{M}_{0,5}(q^5) 
		+ q\overline{M}_{1,5}(q^5) 
		+ q^2\overline{M}_{2,5}(q^5)
		+ q^3\overline{M}_{3,5}(q^5) 
		+ q^4\overline{M}_{4,5}(q^5)
\end{align}
where
\begin{align}
	\label{EqM05}
	\overline{M}_{0,5}(q) 
	&= 
		\frac{\aqprod{q^4,q^6,q^{10}}{q^{10}}{\infty}}
			{\aqprod{q,q^4}{q^5}{\infty}\aqprod{q^2,q^8}{q^{10}}{\infty}} 
		- q(\zeta_5+\zeta_5^4)
		\frac{\aqprod{q^2,q^8,q^{10}}{q^{10}}{\infty}}
				{\aqprod{q^2,q^3}{q^5}{\infty}\aqprod{q^4,q^6}{q^{10}}{\infty}}
,	\\
	\label{EqM15}
	\overline{M}_{1,5}(q) 
	&= 
		(\zeta_5+\zeta_5^4)
		\frac{\aqprod{q^4,q^6,q^{10}}{q^{10}}{\infty}}
				{\aqprod{q^2,q^3}{q^5}{\infty}\aqprod{q^2,q^8}{q^{10}}{\infty}}
,	\\
	\label{EqM25}
	\overline{M}_{2,5}(q) 
	&= 
		-\frac{\aqprod{q^{10}}{q^{10}}{\infty}}{\aqprod{q,q^4}{q^5}{\infty}}
,	\\
	\label{EqM35}
	\overline{M}_{3,5}(q) 
	&= 
		-(\zeta_5+\zeta_5^4)
		\frac{\aqprod{q^{10}}{q^{10}}{\infty}}
			{\aqprod{q^2,q^3}{q^5}{\infty}}
,	\\
	\label{EqM45}
	\overline{M}_{4,5}(q) 
	&= 
		-\frac{\aqprod{q^2,q^8,q^{10}}{q^{10}}{\infty}}
			{\aqprod{q,q^4}{q^5}{\infty}\aqprod{q^4,q^6}{q^{10}}{\infty}}
.
\end{align}
\end{theorem}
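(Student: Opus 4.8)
The plan is to reduce both dissections to single-variable theta-function identities. Since $\aqprod{-q}{q}{\infty}\aqprod{q}{q}{\infty}=\aqprod{q^2}{q^2}{\infty}$, the generating function \eqref{OverpartitionResidualCrankDef} collapses to
\[
\sum_{n=0}^{\infty}\sum_{m=-\infty}^{\infty}\overline{M}(m,n)z^mq^n
=\frac{\aqprod{q^2}{q^2}{\infty}}{\aqprod{zq}{q}{\infty}\aqprod{z^{-1}q}{q}{\infty}},
\]
so Theorems \ref{ThreeDissectionOverpartionCrank} and \ref{FiveDissectionOverpartionCrank} are nothing but the $\ell$-dissections at $z=\zeta_\ell$ of the right-hand side, for $\ell=3$ and $\ell=5$. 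The engine is the telescoping identity $\prod_{j=0}^{\ell-1}\aqprod{\zeta_\ell^j q}{q}{\infty}=\aqprod{q^\ell}{q^\ell}{\infty}$, which is immediate from $\prod_{j=0}^{\ell-1}(1-\zeta_\ell^j x)=1-x^\ell$.

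For $\ell=3$ the argument is self-contained: since $\zeta_3$ and $\zeta_3^{-1}=\zeta_3^2$ already exhaust the nonzero residues modulo $3$, the telescoping identity gives $\aqprod{\zeta_3 q}{q}{\infty}\aqprod{\zeta_3^{-1}q}{q}{\infty}=\aqprod{q^3}{q^3}{\infty}/\aqprod{q}{q}{\infty}$, whence
\[
\sum_{n=0}^{\infty}\sum_{m=-\infty}^{\infty}\overline{M}(m,n)\zeta_3^mq^n=\frac{\aqprod{q}{q}{\infty}\aqprod{q^2}{q^2}{\infty}}{\aqprod{q^3}{q^3}{\infty}}.
\]
I would then $3$-dissect $\aqprod{q}{q}{\infty}\aqprod{q^2}{q^2}{\infty}$ by expanding each factor via Euler's pentagonal number theorem, sorting the resulting double sum according to the residue of the exponent modulo $3$, and recognizing each of the three pieces as an eta-quotient through Jacobi's triple product. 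As $\aqprod{q^3}{q^3}{\infty}$ has all exponents divisible by $3$, dividing through then gives \eqref{EqM03}--\eqref{EqM23}.

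For $\ell=5$ the factors $\aqprod{\zeta_5 q}{q}{\infty}\aqprod{\zeta_5^{-1}q}{q}{\infty}$ only cover the residues $\pm1$, so I would first ``rationalize'' by multiplying numerator and denominator by $\aqprod{q}{q}{\infty}\aqprod{\zeta_5^2 q}{q}{\infty}\aqprod{\zeta_5^{-2}q}{q}{\infty}$, turning the denominator into $\aqprod{q^5}{q^5}{\infty}$; Jacobi's triple product (with $z=\zeta_5^2$) then collapses the new numerator factor $\aqprod{q}{q}{\infty}\aqprod{\zeta_5^2 q}{q}{\infty}\aqprod{\zeta_5^{-2}q}{q}{\infty}$ to $(1-\zeta_5^{-2})^{-1}\sum_{n=-\infty}^{\infty}(-1)^n\zeta_5^{2n}q^{n(n+1)/2}$, so that
\[
\sum_{n=0}^{\infty}\sum_{m=-\infty}^{\infty}\overline{M}(m,n)\zeta_5^mq^n
=\frac{\aqprod{q^2}{q^2}{\infty}}{(1-\zeta_5^{-2})\aqprod{q^5}{q^5}{\infty}}\sum_{n=-\infty}^{\infty}(-1)^n\zeta_5^{2n}q^{n(n+1)/2}.
\]
Now I would dissect modulo $5$: writing $\aqprod{q^2}{q^2}{\infty}=\sum_{m=-\infty}^{\infty}(-1)^mq^{m(3m-1)}$, its exponents land only in the classes $0,2,4$, while the theta sum (whose factor $\zeta_5^{2n}$ depends only on $n$ modulo $5$) has exponents only in the classes $0,1,3$, so each class $5n+i$ receives a short explicit list of index pairs. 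Collecting the terms in each class and invoking Jacobi's triple product once more --- together with the quintuple product identity for the classes fed by several pairs --- rewrites each component as the product claimed in \eqref{EqM05}--\eqref{EqM45}, the $\zeta_5+\zeta_5^4$ coefficients appearing once one simplifies constants of the form $(1-\zeta_5^{-2})^{-1}\zeta_5^{2r}$ using $1+\zeta_5+\zeta_5^2+\zeta_5^3+\zeta_5^4=0$.

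The step I expect to dominate the work is this final identification --- converting the restricted double sums of theta functions (all in $q^5$) into single eta-quotients --- where the quintuple product identity and careful root-of-unity bookkeeping are essential, in particular for pinning down the two-term shape of $\overline{M}_{0,5}$ and the exact $\zeta_5+\zeta_5^4$ coefficients. The $3$-dissection is a strictly easier instance of the same scheme.
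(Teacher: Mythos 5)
Your proposal is correct but takes a genuinely different, more self-contained route than the paper. The paper's proof of Theorem \ref{FiveDissectionOverpartionCrank} is essentially a citation-and-multiply argument: after reducing the generating function to $\aqprod{q^2}{q^2}{\infty}/\aqprod{\zeta_5q,\zeta_5^{-1}q}{q}{\infty}$, it quotes Lemma 3.9 of Garvan's crank paper for the $5$-dissection of $1/\aqprod{\zeta_5q,\zeta_5^{-1}q}{q}{\infty}$ (which has only two nonzero components, in classes $0$ and $1$, namely $1/\aqprod{q^5,q^{20}}{q^{25}}{\infty}$ and $(\zeta_5+\zeta_5^{-1})q/\aqprod{q^{10},q^{15}}{q^{25}}{\infty}$) and Lemma 3.18 of the same paper with $q\mapsto q^2$ for the $5$-dissection of $\aqprod{q^2}{q^2}{\infty}$ (classes $0,2,4$), and then multiplies the two dissections term by term. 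What you describe --- rationalize to denominator $\aqprod{q^5}{q^5}{\infty}$, collapse the rationalizing factor by Jacobi's triple product, and sort the resulting theta sums by residue class --- is precisely the classical Atkin--Swinnerton-Dyer method by which those two lemmas are themselves proved, so you are rederiving the paper's imported ingredients rather than citing them; both routes work, yours being longer but needing no external input. One bookkeeping refinement you should add: the class-$3$ component of $\sum_n(-1)^n\zeta_5^{2n}q^{n(n+1)/2}$ vanishes identically, since it arises only from $n\equiv2\pmod5$ and, after extracting $q^3$, the residual sum $\sum_k(-1)^kq^{25k(k+1)/2}$ cancels under $k\leftrightarrow-1-k$. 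Hence your triangular theta sum really occupies classes $\{0,1\}$ only, the lists of index pairs feeding the classes $0,\dots,4$ have sizes $2,1,1,1,1$ (which is exactly why only $\overline{M}_{0,5}$ has two terms), and no three-term theta identity is ever needed. The surviving classes $0$ and $1$ each merge two conjugate residues of $n$ carrying different powers of $\zeta_5$, producing prefactors $(1-\zeta_5^{-2})$ and $(\zeta_5-\zeta_5^2)$ whose ratios against your global constant $(1-\zeta_5^{-2})^{-1}$ are exactly $1$ and $\zeta_5+\zeta_5^{-1}$; the quintuple product enters only for the pentagonal factor $\aqprod{q^2}{q^2}{\infty}$, whose classes $0$ and $4$ are each fed by two residue classes of the pentagonal index. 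With these details in place your argument closes and reproduces \eqref{EqM05}--\eqref{EqM45}. (Your sketch of the $3$-dissection would also succeed, though there the paper takes yet another route, via a Rogers--Ramanujan-type continued fraction from the Lost Notebook and a theorem of Chan.)
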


\begin{theorem}\label{ThreeDissectionM2Crank}
\begin{align}
	\sum_{n=0}^\infty\sum_{m=-\infty}^\infty
	M2(m,n)\zeta_3^mq^n
	&= \overline{M}_{0,3}(q^3) 
		+ q\overline{M}_{1,3}(q^3) 
		+ q^2\overline{M}_{2,3}(q^3)
\end{align}
where
\begin{align}
	\label{EqM203}
	M2_{0,3}(q)
	&=
		\frac{\aqprod{q^6}{q^6}{\infty}^{10}\aqprod{q^4}{q^4}{\infty}\aqprod{q}{q}{\infty}}
			{\aqprod{q^{12}}{q^{12}}{\infty}^4\aqprod{q^3}{q^3}{\infty}^4\aqprod{q^2}{q^2}{\infty}^3}
,	\\
	\label{EqM213}
	M2_{1,3}(q)
	&=
		\frac{\aqprod{q^6}{q^6}{\infty}^4}
			{\aqprod{q^{12}}{q^{12}}{\infty}\aqprod{q^3}{q^3}{\infty}\aqprod{q^2}{q^2}{\infty}}
,	\\
	\label{EqM223}
	M2_{2,3}(q)
	&=
		-2\frac{\aqprod{q^{12}}{q^{12}}{\infty}^2\aqprod{q^3}{q^3}{\infty}^2\aqprod{q^2}{q^2}{\infty}}
			{\aqprod{q^6}{q^6}{\infty}^2\aqprod{q^4}{q^4}{\infty}\aqprod{q}{q}{\infty}}
		.
\end{align}
\end{theorem}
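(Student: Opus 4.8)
The plan is to specialize the two-variable generating function \eqref{M2Crank} at $z=\zeta_3$ and reduce the statement to a $3$-dissection of a single eta-quotient. From $(1-\zeta_3 x)(1-\zeta_3^{-1}x)=1+x+x^2=(1-x^3)/(1-x)$ with $x=q^{2k}$ we get $\aqprod{\zeta_3 q^2}{q^2}{\infty}\aqprod{\zeta_3^{-1}q^2}{q^2}{\infty}=\aqprod{q^6}{q^6}{\infty}/\aqprod{q^2}{q^2}{\infty}$, so that, together with $\aqprod{-q}{q^2}{\infty}=\aqprod{q^2}{q^2}{\infty}^2/\Parans{\aqprod{q}{q}{\infty}\aqprod{q^4}{q^4}{\infty}}$, identity \eqref{M2Crank} becomes
\begin{align*}
	\sum_{n\ge 0}\sum_{m=-\infty}^\infty M2(m,n)\zeta_3^m q^n
	&=\frac{\aqprod{q^2}{q^2}{\infty}^4}{\aqprod{q}{q}{\infty}\aqprod{q^4}{q^4}{\infty}\aqprod{q^6}{q^6}{\infty}}
	=\frac{\psi(q)\,\varphi(-q^2)}{\aqprod{q^6}{q^6}{\infty}},
\end{align*}
where $\psi(q)=\sum_{n\ge 0}q^{n(n+1)/2}$ and $\varphi(-q)=\sum_{n=-\infty}^\infty(-1)^n q^{n^2}$. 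Since $\aqprod{q^6}{q^6}{\infty}$ is a power series in $q^6$, it passes through the $3$-dissection, so Theorem~\ref{ThreeDissectionM2Crank} is equivalent to the assertion $\psi(q)\varphi(-q^2)=P_0(q^3)+qP_1(q^3)+q^2P_2(q^3)$ with $P_j(q)=\aqprod{q^2}{q^2}{\infty}\,M2_{j,3}(q)$.

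To establish this I would $3$-dissect the two factors separately. Splitting $\psi(q)=\sum_m q^{m(m+1)/2}$ according to $m\bmod 3$ (no such exponent is $\equiv 2$) gives the two-term dissection $\psi(q)=\psi_0(q^3)+q\,\psi(q^9)$ with $\psi_0(q)=\aqprod{-q,-q^2,q^3}{q^3}{\infty}$, and splitting $\varphi(-q^2)=\sum_n(-1)^n q^{2n^2}$ according to $n\bmod 3$ (no such exponent is $\equiv 1$) gives $\varphi(-q^2)=\varphi(-q^{18})-2q^2\aqprod{q^6,q^{30},q^{36}}{q^{36}}{\infty}$. Multiplying and collecting the exponents modulo $3$, the $q^1$-component comes out as $P_1(q)=\psi(q^3)\varphi(-q^6)$, which divided by $\aqprod{q^2}{q^2}{\infty}$ is exactly the eta-quotient $M2_{1,3}$ of the statement; the $q^2$-component $P_2(q)=-2\,\psi_0(q)\aqprod{q^2,q^{10},q^{12}}{q^{12}}{\infty}$ reduces to $\aqprod{q^2}{q^2}{\infty}M2_{2,3}(q)$ after a short product rearrangement; and the $q^0$-component emerges as a sum of two theta products. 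A reassuring check is that the $M2_{1,3}$ produced this way equals $N2_{1,3}$ of \eqref{EqN213}: by Theorem~\ref{MSptRankCrank} this equality is exactly the vanishing of the $q^{3n+1}$-coefficients of $\STwoB(\zeta_3,q)$, i.e. the congruence $\Mspt{3n+1}\equiv 0\pmod 3$ of \eqref{TheoremM2SptBarCongruence3np1}.

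I expect the main obstacle to be the identification of $P_0$: the $3$-dissection yields the $q^0$-component of $\psi(q)\varphi(-q^2)$ as a two-term sum of theta products, and one must show it collapses to the single eta-quotient $\aqprod{q^2}{q^2}{\infty}M2_{0,3}(q)$ of the statement. This is a genuine theta-function identity, provable either by a direct Jacobi-triple-product manipulation of the two summands, or, treating both sides as modular forms, by a finite check of initial $q$-coefficients. Everything else — the cyclotomic specialization, the two individual dissections, and the residue-class bookkeeping — is routine; the collapse of the two-term $q^0$-component into one product is where the real work lies.
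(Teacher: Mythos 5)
Your reduction of the theorem to the $3$-dissection of $\psi(q)\phi(-q^2)/\aqprod{q^6}{q^6}{\infty}$ is correct, and your route is genuinely different from the paper's. The paper does not dissect this product from scratch: it observes that $\aqprod{-q}{q^2}{\infty}\aqprod{q^2}{q^2}{\infty}^2=\aqprod{-q}{-q}{\infty}\aqprod{q^2}{q^2}{\infty}$, so the $M2$-crank generating function at $\zeta_3$ is obtained from the overpartition-crank product $\aqprod{q}{q}{\infty}\aqprod{q^2}{q^2}{\infty}/\aqprod{q^3}{q^3}{\infty}$ of Theorem \ref{ThreeDissectionOverpartionCrank} by replacing $q$ with $-q$ and multiplying by $\aqprod{-q^3}{-q^3}{\infty}/\aqprod{q^6}{q^6}{\infty}$, a series in $q^3$; both operations respect the residue classes of the exponents, so the entire $3$-dissection is inherited and only cosmetic rewriting of $\aqprod{-q^3}{-q^3}{\infty}$ and $\aqprod{-q^9}{-q^9}{\infty}$ remains. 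You instead multiply the classical dissections $\psi(q)=\psi_0(q^3)+q\psi(q^9)$ and $\phi(-q^2)=\phi(-q^{18})-2q^2\aqprod{q^6,q^{30},q^{36}}{q^{36}}{\infty}$; this produces the $q^1$- and $q^2$-components as single products at once (your $P_1$ and $P_2$ do reduce to (\ref{EqM213}) and (\ref{EqM223}) exactly as you claim), but leaves the $q^0$-component as a two-term theta sum that must be shown to collapse to the eta-quotient in (\ref{EqM203}). That collapse is indeed where the content lies, but it is not a defect peculiar to your method: after the $q\mapsto-q$ substitution it is elementarily equivalent to the evaluation of $\overline{M}_{0,3}$ in (\ref{EqM03}), which the paper itself does not prove from scratch either --- it invokes Entry 3.3.1(a) of the Lost Notebook (Ramanujan's cubic continued fraction) and Theorem 2 of Chan. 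So to finish you should either cite that same identity for your $P_0$ or carry out the modular-forms/Jacobi-triple-product verification you sketch; with that supplied the argument is complete. What your route buys is independence from Theorem \ref{ThreeDissectionOverpartionCrank} and an immediate, transparent derivation of the two components actually needed for the congruence applications; what the paper's route buys is that the one hard theta identity is confronted only once, in the overpartition-crank dissection, and the present theorem then follows by substitution.
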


\begin{theorem}\label{FiveDissectionM2Crank}
\begin{align}
	\sum_{n=0}^\infty\sum_{m=-\infty}^\infty
	M2(m,n)\zeta_5^mq^n
	&= 	M2_{0,5}(q^5) 
		+ q M2_{1,5}(q^5) 
		+ q^2 M2_{2,5}(q^5)
		+ q^3 M2_{3,5}(q^5) 
		+ q^4 M2_{4,5}(q^5)
\end{align}
where
\begin{align}
	\label{EqM205}
	M2_{0,5}(q)
	&=
		\frac{\aqprod{-q^3,-q^5,-q^7,q^{10}}{q^{10}}{\infty}}
			{\aqprod{-q,q^4,q^6,-q^9}{q^{10}}{\infty}} 	
,	\\
	\label{EqM215}
	M2_{1,5}(q)
	&=
		\frac{\aqprod{-q^5,q^{10}}{q^{10}}{\infty}}
			{\aqprod{q^2,q^8}{q^{10}}{\infty}}
,	\\
	\label{EqM225}
	M2_{2,5}(q)
	&=
		(\zeta_5+\zeta_5^4)
		\frac{\aqprod{q^2,-q^3,-q^5,-q^7,q^8,q^{10}}{q^{10}}{\infty}}
			{\aqprod{-q,q^4,q^4,q^6,q^6,-q^9}{q^{10}}{\infty}}
		-\frac{\aqprod{-q,q^4,-q^5,q^6,-q^9,q^{10}}{q^{10}}{\infty}}
			{\aqprod{q^2,q^2,-q^3,-q^7,q^8,q^8}{q^{10}}{\infty}}
,	\\
	\label{EqM235}
	M2_{3,5}(q)
	&=
		(\zeta_5+\zeta_5^4)
		\frac{\aqprod{-q^5,q^{10}}{q^{10}}{\infty}}
			{\aqprod{q^4,q^6}{q^{10}}{\infty}}
,	\\
	\label{EqM245}
	M2_{4,5}(q)
	&=
		-(\zeta_5+\zeta_5^4)
			\frac{\aqprod{-q,-q^5,-q^9,q^{10}}{q^{10}}{\infty}}
				{\aqprod{q^2,-q^3,-q^7,q^8}{q^{10}}{\infty}}
.
\end{align}
\end{theorem}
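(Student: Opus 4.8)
The plan is to evaluate the generating function \eqref{M2Crank} at $z=\zeta_5$ and then rewrite $\sum_{m,n}M2(m,n)\zeta_5^m q^n$ as a combination of infinite products in $q^5$ with coefficients in $\mathbb{Z}[\zeta_5+\zeta_5^4]$, exactly as for Theorems \ref{ThreeDissectionOverpartionCrank}--\ref{ThreeDissectionM2Crank}. The first step removes $\zeta_5$ from the infinite products: since $\prod_{j=0}^{4}(1-\zeta_5^j x)=1-x^5$ gives $\prod_{j=0}^{4}\aqprod{\zeta_5^j q^2}{q^2}{\infty}=\aqprod{q^{10}}{q^{10}}{\infty}$, we get
\[
\aqprod{\zeta_5 q^2}{q^2}{\infty}\aqprod{\zeta_5^{-1}q^2}{q^2}{\infty}=\frac{\aqprod{q^{10}}{q^{10}}{\infty}}{\aqprod{q^2}{q^2}{\infty}\,\aqprod{\zeta_5^2 q^2}{q^2}{\infty}\,\aqprod{\zeta_5^{-2} q^2}{q^2}{\infty}},
\]
and Jacobi's triple product rewrites $\aqprod{\zeta_5^2 q^2}{q^2}{\infty}\aqprod{\zeta_5^{-2} q^2}{q^2}{\infty}\aqprod{q^2}{q^2}{\infty}$ as $\tfrac{1}{1-\zeta_5^{-2}}\sum_{k=-\infty}^{\infty}(-1)^k\zeta_5^{2k}q^{k(k+1)}$. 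Hence $\sum_{m,n}M2(m,n)\zeta_5^m q^n$ equals, up to the constant $1/(1-\zeta_5^{-2})$, the product of $\aqprod{-q}{q^2}{\infty}$, the eta-quotient $\aqprod{q^2}{q^2}{\infty}/\aqprod{q^{10}}{q^{10}}{\infty}$, and that theta function. (Equivalently, substitute $z=\zeta_5$ into the Lambert-series form $\frac{\aqprod{q^2}{q^2}{\infty}}{\aqprod{zq^2}{q^2}{\infty}\aqprod{z^{-1}q^2}{q^2}{\infty}}=\frac{1-z}{\aqprod{q^2}{q^2}{\infty}}\sum_n\frac{(-1)^nq^{n(n+1)}}{1-zq^{2n}}$ of the crank and rationalize each $1/(1-\zeta_5 q^{2n})$; this lands in the same place.)

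Next I would $5$-dissect the right-hand side factor by factor. For the theta $\sum_k(-1)^k\zeta_5^{2k}q^{k(k+1)}$, writing $k=5\ell+s$ and using $\zeta_5^{10}=1$ together with $k(k+1)=25\ell^2+5\ell(2s+1)+s(s+1)$ splits it as $\sum_{s=0}^{4}(-\zeta_5^{2})^s q^{s(s+1)}\Phi_s(q^5)$, where $\Phi_s(Q)=\sum_\ell(-1)^\ell Q^{5\ell^2+(2s+1)\ell}$ is, by Jacobi's triple product, an infinite product (with $\Phi_2\equiv 0$, consistent with $M2(m,n)=M2(-m,n)$). The eta-quotient prefactor $\aqprod{-q}{q^2}{\infty}\aqprod{q^2}{q^2}{\infty}/\aqprod{q^{10}}{q^{10}}{\infty}$ is $5$-dissected by standard methods, resting ultimately on the $5$-dissections of $\aqprod{q}{q}{\infty}$ and $1/\aqprod{q}{q}{\infty}$ together with $\aqprod{-q}{q^2}{\infty}=\aqprod{q^2}{q^4}{\infty}/\aqprod{q}{q^2}{\infty}$. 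Multiplying the two dissected factors and collecting, for each $j\in\{0,1,2,3,4\}$, the terms whose $q$-exponent lies in the residue class $j$ mod $5$ then yields an expression for $M2_{j,5}(q^5)$.

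The hard part will be the final simplification. The raw output is, for each $j$, a sum of several infinite products, with the $\zeta_5$'s entering through $\zeta_5^{2}+\zeta_5^{3}=-1-(\zeta_5+\zeta_5^{4})$, whereas \eqref{EqM205}--\eqref{EqM245} are compact eta-quotients; in particular \eqref{EqM225} plainly requires a genuine two-term combination, and \eqref{EqM205} requires consolidating a sum into a single quotient of theta products. Carrying these out calls for several theta-function identities (Jacobi's triple product once more, the quintuple product identity, and theta addition/dissection formulas), the same toolkit that underlies the classical proofs of Ramanujan's congruences modulo $5$. Once a candidate dissection is in hand it can be certified more cleanly by observing that, after multiplication by a suitable rational power of $q$, every series in sight is a (generalized) eta-quotient and hence a modular form of fixed weight on $\Gamma_1(N)$ for a suitable $N$, with nebentypus coming from $\zeta_5$; each of the five identities then reduces to matching finitely many Fourier coefficients via a Sturm-type bound. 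I would use the $q$-series manipulations to find the dissection and the modular argument to verify it.
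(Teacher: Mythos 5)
Your plan is sound and would get there, but it is considerably more labor-intensive than the paper's proof, which is essentially a finite computation by citation. The paper writes the generating function as $\aqprod{-q}{q^2}{\infty}\aqprod{q^2}{q^2}{\infty}\cdot\frac{1}{\aqprod{\zeta_5q^2,\zeta_5^{-1}q^2}{q^2}{\infty}} = \aqprod{-q}{-q}{\infty}\cdot\frac{1}{\aqprod{\zeta_5q^2,\zeta_5^{-1}q^2}{q^2}{\infty}}$ and quotes two ready-made $5$-dissections from \cite{Garvan1}: Lemma 3.9 for $1/\aqprod{\zeta_5q,\zeta_5^{-1}q}{q}{\infty}$ (with $q\mapsto q^2$), which has only two nonzero residue classes, each already a single product with coefficient $1$ or $\zeta_5+\zeta_5^{-1}$, and Lemma 3.18 for $\aqprod{q}{q}{\infty}$ (with $q\mapsto -q$), which has three. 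Multiplying a two-term by a three-term dissection gives six products that fall directly into the five residue classes (class $2$ receiving two of them, which is why \eqref{EqM225} alone has two terms); no further consolidation is needed. Your rationalization step — clearing $\zeta_5$ from the denominator and applying the triple product — is precisely how a lemma like Garvan's Lemma 3.9 is proved, so you are re-deriving that ingredient from scratch; the cost is that your raw residue-class pieces for the theta factor emerge as sums of two $\zeta_5$-weighted theta series divided by $1-\zeta_5^{-2}$, and recombining these into single real-coefficient products is exactly the quintuple-product/theta-addition work you correctly flag as the hard part. (Your computations of the pieces are right as far as they go: $\Phi_2\equiv 0$, and the surviving classes of the theta are $0$ and $2$ bmod $5$, matching the two classes in Lemma 3.9 after $q\mapsto q^2$.) Your fallback of certifying the final identities as modular-function identities on $\Gamma_1(N)$ via order bounds at cusps is legitimate and is in fact the technique the paper itself deploys for the genuinely harder dissection in Theorem \ref{FiveDissectionExtraSeries}; for the present theorem, though, citing the two dissection lemmas renders all of that unnecessary.
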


\begin{theorem}\label{ThreeDissectionExtraSeries}
\begin{align}
	\frac{\aqprod{-q}{q}{\infty}}{\aqprod{q}{q}{\infty}}  
	\Parans{
		\frac{1}{2}
		+
		\sum_{n=1}^\infty 
			\frac{(1-\zeta_3)(1-\zeta_3^{-1})(-1)^nq^n}
			{(1-\zeta_3q^n)(1-\zeta_3^{-1}q^n)}			
	}
	&= A_0(q^3) + qA_1(q^3) + q^2A_2(q^3)
\end{align}
where
\begin{align}
	\label{EqExtra03}
	A_0(q) 
	&=
		\frac{\aqprod{q^3}{q^3}{\infty}^4\aqprod{q^2}{q^2}{\infty}}
			{2\aqprod{q}{q}{\infty}^2\aqprod{q^6}{q^6}{\infty}^2}
,	\\
	\label{EqExtra13}
	A_1(q)
	&=
		-2\frac{\aqprod{q^6}{q^6}{\infty}\aqprod{q^3}{q^3}{\infty}}
			{\aqprod{q}{q}{\infty}}
,	\\
	\label{EqExtra23}
	A_2(q) 
	&=
		2\frac{\aqprod{q^6}{q^6}{\infty}^4}
			{\aqprod{q^3}{q^3}{\infty}^2\aqprod{q^2}{q^2}{\infty}}
.
\end{align}
\end{theorem}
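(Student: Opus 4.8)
The plan is to specialize $z=\zeta_3$, collapse the bracketed factor into a single Lambert-type series, and then prove the resulting identity of $q$-series directly.

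First I would record the two facts about $\zeta_3$ that do all the simplification. Since $1+\zeta_3+\zeta_3^2=0$ we have $(1-\zeta_3)(1-\zeta_3^{-1})=3$, and the polynomial identity $(1-x)(1-\zeta_3 x)(1-\zeta_3^2 x)=1-x^3$ gives $(1-\zeta_3 q^n)(1-\zeta_3^{-1}q^n)=1+q^n+q^{2n}=\frac{1-q^{3n}}{1-q^n}$. Using also $\frac{\aqprod{-q}{q}{\infty}}{\aqprod{q}{q}{\infty}}=\frac{\aqprod{q^2}{q^2}{\infty}}{\aqprod{q}{q}{\infty}^2}$, the left-hand side of the theorem becomes
\begin{align*}
	\frac{\aqprod{q^2}{q^2}{\infty}}{\aqprod{q}{q}{\infty}^2}
	\left(\frac{1}{2}+3\sum_{n=1}^{\infty}\frac{(-1)^n(q^n-q^{2n})}{1-q^{3n}}\right),
\end{align*}
and the theorem asserts this equals $A_0(q^3)+qA_1(q^3)+q^2A_2(q^3)$ with $A_0,A_1,A_2$ as in \eqref{EqExtra03}--\eqref{EqExtra23}.

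Next I would $3$-dissect the bracketed series. Expanding $\frac{1}{1-q^{3n}}=\sum_{j\ge0}q^{3jn}$ and collecting the resulting double sum by the residue of the exponent modulo $3$ shows that its $q^{3m}$-part is the same series with $q$ replaced by $q^3$, while its $q^{3m+1}$- and $q^{3m+2}$-parts are explicit twisted Lambert series in $q^3$; equivalently one can use the partial fraction $\frac{3}{(1-\zeta_3 q^n)(1-\zeta_3^{-1}q^n)}=\frac{1-\zeta_3^{-1}}{1-\zeta_3 q^n}+\frac{1-\zeta_3}{1-\zeta_3^{-1}q^n}$ to reduce to the two Appell-type sums $\sum_{n\ge1}\frac{(-1)^nq^n}{1-\zeta_3^{\pm1}q^n}$. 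One then multiplies this dissection by the standard $3$-dissection of the overpartition generating function $\frac{\aqprod{q^2}{q^2}{\infty}}{\aqprod{q}{q}{\infty}^2}$, collects the product by exponent modulo $3$, and simplifies each of the three resulting combinations of infinite products to the eta-quotients $A_0,A_1,A_2$ by Jacobi's triple product and routine manipulations of infinite products. It is worth noting, as a check and as a way to fix the target of the computation, that $A_0=\tfrac12\overline{N}_{0,3}$, $A_1=-\overline{N}_{1,3}$, and $A_2=-\overline{M}_{2,3}$, so the product forms being aimed at are precisely those recorded in Theorems \ref{ThreeDissectionForDysonRank} and \ref{ThreeDissectionOverpartionCrank}.

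I expect the main obstacle to be this last step: pushing the dissected Lambert series through the product and recognizing the pieces as the stated eta-quotients, together with the bookkeeping needed to see that all $\zeta_3$-dependent terms cancel so that $A_0,A_1,A_2$ have rational coefficients. If one prefers to sidestep the heaviest $q$-series manipulation, the dissection reduces the theorem to three explicit product identities, and each of these can instead be confirmed by standard modular-form techniques, checking that the two sides are modular forms of the same weight and level and agree up to a Sturm-type bound.
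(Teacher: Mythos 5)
Your opening reduction coincides exactly with the paper's: using $(1-\zeta_3)(1-\zeta_3^{-1})=3$ and $(1-\zeta_3q^n)(1-\zeta_3^{-1}q^n)=(1-q^{3n})/(1-q^n)$, both arguments arrive at
$\frac{\aqprod{-q}{q}{\infty}}{\aqprod{q}{q}{\infty}}\left(\tfrac12+3\sum_{n\ge1}\tfrac{(-1)^nq^n(1-q^n)}{1-q^{3n}}\right)$,
and your sanity checks $A_0=\tfrac12\overline{N}_{0,3}$, $A_1=-\overline{N}_{1,3}$, $A_2=-\overline{M}_{2,3}$ are correct. From there the routes diverge. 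The paper never dissects the Lambert series: it first invokes Fine's identity (32.64), namely $\sum_{n\ge1}\frac{(-1)^nq^n(1-q^n)}{1-q^{3n}}=-\frac16\bigl(1-\frac{\phi(-q)^3}{\phi(-q^3)}\bigr)$, which collapses the entire bracket to $\tfrac12\phi(-q)^3/\phi(-q^3)$. Since $\aqprod{-q}{q}{\infty}/\aqprod{q}{q}{\infty}=1/\phi(-q)$, the left-hand side becomes $\phi(-q)^2/(2\phi(-q^3))$, and because $\phi(-q^3)$ already lives in $q^3$, the only dissection required is that of $\phi(-q)^2$, which falls out of squaring the two-term identity $\phi(q)=\phi(q^9)+2qf(q^3,q^{15})$.

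The step at which your primary route stalls is the one you flag yourself. The $q^{3m+1}$- and $q^{3m+2}$-parts of the Lambert series are twisted divisor-sum (weight-one Eisenstein-type) series, not infinite products, so after multiplying by the $3$-dissection of $\aqprod{q^2}{q^2}{\infty}/\aqprod{q}{q}{\infty}^2$ and collecting residues you are left with sums of (Eisenstein series)$\times$(product), and no amount of Jacobi-triple-product manipulation will compress these into the single eta-quotients $A_1$, $A_2$; what is required is a genuine theta-function evaluation of the divisor sum, and Fine's identity is precisely that ingredient, deployed by the paper \emph{before} any dissection rather than after. Your fallback — reduce to three explicit identities and verify them by modular-form arguments up to a Sturm-type bound — is sound, and is in fact exactly what the paper does for the harder $5$-dissection (Theorem \ref{FiveDissectionExtraSeries}, where the check runs to $q^{1840}$ on $\Gamma_1(200)$); to carry it out you would need to multiply through by a suitable eta-quotient to reach weight zero and bound the orders at all cusps of the relevant $\Gamma_1(N)$ before trusting the coefficient comparison. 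So the proposal is completable via its own fallback, but as written the ``routine manipulations'' step is a gap, and the clean proof runs through Fine's evaluation.
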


\begin{theorem}\label{FiveDissectionExtraSeries}
\begin{align}
	\frac{\aqprod{-q}{q}{\infty}}{\aqprod{q}{q}{\infty}}
	\Parans{
		\frac{1}{2}
		+
		\sum_{n=1}^\infty \frac{(1-\zeta_5)(1-\zeta_5^{-1})(-1)^nq^{n}}
			{(1-\zeta_5q^n)(1-\zeta_5^{-1}q^n)}
	}
	&= B_0(q^5) + qB_1(q^5) + q^2B_2(q^5) + q^3B_3(q^5) + q^4B_4(q^5)
\end{align}
where
\begin{align}
	\label{EqExtra05}
	B_0(q)
	&=
		\frac{\aqprod{q^5}{q^5}{\infty}^2\aqprod{q^4,q^6}{q^{10}}{\infty}}
		{2\aqprod{q^{10}}{q^{10}}{\infty}\aqprod{q^2,q^3}{q^5}{\infty}^2}
		+
		(\zeta_5+\zeta_5^{-1})
		\frac{q\aqprod{q^{10}}{q^{10}}{\infty}}
		{\aqprod{q^3,q^4,q^6,q^7}{q^{10}}{\infty}}
,	\\
	\label{EqExtra15}
	B_1(q)
	&=
		(\zeta_5+\zeta_5^{-1}-1)\frac{\aqprod{q^4,q^6,q^{10}}{q^{10}}{\infty}}
		{\aqprod{q^2,q^8}{q^{10}}{\infty}^2 \aqprod{q^3,q^7}{q^{10}}{\infty}}
,	\\
	\label{EqExtra25}
	B_2(q)
	&=
		(1-2\zeta_5-2\zeta_5^{-1})
		\frac{\aqprod{q^{10}}{q^{10}}{\infty}}
			{\aqprod{q,q^9}{q^{10}}{\infty}\aqprod{q^4,q^6}{q^{10}}{\infty}}
,	\\
	\label{EqExtra35}
	B_3(q)
	&=
		-\frac{\aqprod{q^{10}}{q^{10}}{\infty}}
			{\aqprod{q^2,q^3}{q^{5}}{\infty}}
,	\\
	\label{EqExtra45}
	B_4(q)
	&=
		(\zeta_5+\zeta_5^{-1})
		\frac{\aqprod{q^2,q^8,q^{10}}{q^{10}}{\infty}}
			{\aqprod{q,q^9}{q^{10}}{\infty}\aqprod{q^4,q^6}{q^{10}}{\infty}^2}
	.
\end{align}
\end{theorem}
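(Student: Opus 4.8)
The plan is to set $z=\zeta_5$ on the left-hand side, put the Lambert-type sum in closed form by partial fractions, $5$-dissect the resulting weight-one Lambert series together with the prefactor by classical theta identities, and then read off the five blocks; this follows the same template as the proofs of the $5$-dissections in Theorems~\ref{FiveDissectionForDysonRank}, \ref{FiveDissectionForM2Rank}, \ref{FiveDissectionOverpartionCrank}, and \ref{FiveDissectionM2Crank}, and of the companion $3$-dissection in Theorem~\ref{ThreeDissectionExtraSeries}. First I would rewrite the bracket. Applying $\frac{1}{(1-a)(1-b)}=\frac{1}{a-b}\Parans{\frac{a}{1-a}-\frac{b}{1-b}}$ with $a=zq^n$, $b=z^{-1}q^n$, simplifying $\frac{(1-z)(1-z^{-1})}{z-z^{-1}}=\frac{1-z}{1+z}$, and then expanding each geometric series and resumming in the index $n$, one obtains
\begin{align*}
	\frac{1}{2}+\sum_{n=1}^\infty\frac{(1-z)(1-z^{-1})(-1)^nq^n}{(1-zq^n)(1-z^{-1}q^n)}
	&=\frac{1}{2}-\frac{1-z}{1+z}\sum_{k=1}^\infty\frac{(z^k-z^{-k})q^k}{1+q^k}.
\end{align*}

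Now specialize $z=\zeta_5$. The factor $\tfrac{1-\zeta_5}{1+\zeta_5}\in\mathbb{Q}(\zeta_5)$ is a constant, and $\zeta_5^k-\zeta_5^{-k}$ depends only on $k\bmod 5$: the class $k\equiv0$ drops out, while the classes $k\equiv1,4$ and $k\equiv2,3$ pair off with opposite signs and coefficients $\zeta_5-\zeta_5^{-1}$ and $\zeta_5^2-\zeta_5^{-2}$ respectively. Hence the right-hand side above is a $\mathbb{Q}(\zeta_5)$-linear combination of the four Lambert series $L_j(q)=\sum_{k\equiv j\,(5)}\tfrac{q^k}{1+q^k}$, $j\in\{1,2,3,4\}$. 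Writing $\tfrac{q^k}{1+q^k}=\sum_{m\ge1}(-1)^{m-1}q^{mk}$ exhibits each $L_j$ as a series with character-twisted divisor coefficients; these are weight-one Eisenstein-type series, and their $5$-dissections are theta quotients obtainable from Jacobi's triple product and the quintuple product identity. I would combine these with the $5$-dissection of the prefactor $\frac{\aqprod{-q}{q}{\infty}}{\aqprod{q}{q}{\infty}}=\frac{\aqprod{q^2}{q^2}{\infty}}{\aqprod{q}{q}{\infty}^2}$, which comes from the standard $5$-dissections of $\aqprod{q}{q}{\infty}$ and of $1/\aqprod{q}{q}{\infty}$.

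Finally, multiplying the dissected bracket by the dissected prefactor, collecting the coefficient of $q^{5m+r}$ for $r=0,1,2,3,4$, and reducing the arising powers of $\zeta_5$ via $1+\zeta_5+\zeta_5^2+\zeta_5^3+\zeta_5^4=0$, I would match each block against the product $B_r(q^5)$ of \eqref{EqExtra05}--\eqref{EqExtra45}. The main obstacle is exactly this last identification: coaxing the five theta quotients into the normalized shapes displayed, and in particular recovering the coefficients $\zeta_5+\zeta_5^{-1}$, $\zeta_5+\zeta_5^{-1}-1$, and $1-2\zeta_5-2\zeta_5^{-1}$ in $B_0,B_1,B_2,B_4$. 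That step is a sustained chain of theta-function manipulations -- addition theorems for products $\aqprod{q^a,q^b}{q^c}{\infty}$ and quintuple-product rearrangements -- entirely of the kind already used for Theorems~\ref{FiveDissectionForDysonRank}--\ref{FiveDissectionM2Crank}; no new ingredient is needed, but the root-of-unity bookkeeping is delicate, so I would confirm the constants and signs by comparing the $q$-expansions of both sides to sufficiently high order.
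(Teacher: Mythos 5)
Your reduction of the left-hand side is sound and is essentially the same first step the paper takes: after partial fractions and resummation, the bracket at $z=\zeta_5$ becomes a fixed $\mathbb{Q}(\zeta_5)$-linear combination of the two character-twisted Lambert series $\sum_{n\ge1}\frac{(-1)^n(q^{n}-q^{4n})}{1-q^{5n}}$ and $\sum_{n\ge1}\frac{(-1)^n(q^{2n}-q^{3n})}{1-q^{5n}}$ (the paper reaches the same point by clearing $\prod_{j=0}^{4}(1-\zeta_5^jq^n)=1-q^{5n}$ and names the resulting weight-one Eisenstein series $C(\tau)$ and $D(\tau)$). The gap is in what you call the ``main obstacle.'' You assert that the $5$-dissections of these Lambert series, multiplied by $\aqprod{-q}{q}{\infty}/\aqprod{q}{q}{\infty}$, are theta quotients obtainable by Jacobi triple product and quintuple product manipulations ``of the kind already used'' for the rank and crank dissections, with ``no new ingredient.'' That is not substantiated and is not how those earlier dissections work: the crank dissections factor as a product of two separately known $5$-dissections (of $1/\aqprod{\zeta_5 q,\zeta_5^{-1}q}{q}{\infty}$ and of $\aqprod{q^2}{q^2}{\infty}$), and the rank dissections are imported from Lovejoy--Osburn. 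Here no such factorization is available, and in fact the $q^{5m}$-block is \emph{not} a pure theta quotient --- in the paper's Proposition giving the dissection of $C(\tau)\eta(2\tau)/\eta(\tau)^2$, the zeroth component still carries a copy of $C(\tau)$ itself, and two further identities (for $2C-D$ and $3D-C$ times $\eta(2\tau)^2/\eta(\tau)^4$) are needed to convert the particular combination occurring in $B_0$ into the displayed products.

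Your fallback --- ``confirm the constants and signs by comparing the $q$-expansions of both sides to sufficiently high order'' --- does not close this gap, because checking finitely many coefficients proves nothing without an a priori bound on how many suffice. Supplying that bound is precisely the technical heart of the paper's proof: the relevant differences are shown to be modular functions on $\Gamma_1(200)$ (resp.\ $\Gamma_1(40)$), the valence formula is invoked, the orders at all $336$ inequivalent cusps are bounded below (total $\ge -1840$, resp.\ $\ge -48$), and only then does agreement of the expansions past $q^{1840}$ (resp.\ $q^{48}$) constitute a proof. To make your argument complete you would need either to actually carry out the elementary theta derivation you allude to (which would be a genuinely new, and nontrivial, contribution) or to add the modular-function/valence-formula apparatus that turns your numerical check into a proof.
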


With these dissections, we need only match up the appropriate terms for each congruence.
The congruence for $\sptBar{}{3n}$ of Theorem \ref{TheoremSptBarCongruences} follows from
using (\ref{EqN03}) and (\ref{EqM03}) to get
\begin{align}
	\overline{N}_{0,3} - \overline{M}_{0,3} &= 0,
\end{align}
which along with Theorem \ref{SptBarRankCrank} gives that the coefficients
of $q^{3n}$ in $\SB(\zeta_3,q)$ are zero.

The congruences for $\sptBar{1}{3n}$ and $\sptBar{1}{5n}$ follow from using
(\ref{EqN03}), (\ref{EqExtra03}), (\ref{EqN05}), and (\ref{EqExtra05})
to get
\begin{align}
	\frac{\overline{N}_{0,3}}{2} - A_0 &= 0,
	\\
	\frac{\overline{N}_{0,5}}{2} - B_0 &= 0,
\end{align}
and Theorem \ref{SptBar1RankCrank}.

The congruences for $\sptBar{2}{3n}$, $\sptBar{2}{3n+1}$, and $\sptBar{2}{5n+3}$ follow from
using 
(\ref{EqN03}), (\ref{EqExtra03}), (\ref{EqM03}),
(\ref{EqN13}), (\ref{EqExtra13}), (\ref{EqM13}),
(\ref{EqN35}), (\ref{EqExtra35}), and (\ref{EqM35})
to get
\begin{align}
	\frac{\overline{N}_{0,3}}{2} + A_0 - \overline{M}_{0,3} &= 0,
	\\
	\frac{\overline{N}_{1,3}}{2} + A_1 - \overline{M}_{1,3} &= 0,
	\\
	\frac{\overline{N}_{3,5}}{2} + B_3 - \overline{M}_{3,5} &= 0,
\end{align}
and Theorem \ref{SptBar2RankCrank}.

Lastly the congruences for $\Mspt{3n+1}$, $\Mspt{5n+1}$, and $\Mspt{5n+3}$
follow from using
(\ref{EqN213}), (\ref{EqM213}),
(\ref{EqN215}), (\ref{EqM215}),
(\ref{EqN235}), and (\ref{EqM235})
to get
\begin{align}
	N2_{1,3} - M2_{1,3} &= 0,
	\\
	N2_{1,5} - M2_{1,5} &= 0,
	\\
	N2_{3,5} - M2_{3,5} &= 0,	
\end{align}
and applying Theorem \ref{MSptRankCrank}.

For Theorem \ref{TheoremSptBar1CongruenceOdd} we have to do a little better. 
In particular we will prove the following.
\begin{theorem}\label{TheoremTwoDissectionSBars}
\begin{align}
	\label{TwoDissectionSBar}
	\SB(i,q) 
	&=
	\sum_{n=1}^\infty q^{n^2} - \sum_{n=1}^\infty (-1)^nq^{2n^2},
	\\
	\label{TwoDissectionS1Bar}
	\SB_1(i,q)
	&= 
	\sum_{n=1} q^{(2n-1)^2},
	\\
	\label{TwoDissectionS2Bar}
	\SB_2(i,q) 
	&=
	\sum_{n=1}^\infty q^{(2n)^2} - \sum_{n=1}^\infty (-1)^nq^{2n^2}
.
\end{align}
\end{theorem}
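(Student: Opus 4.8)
The plan is to obtain all three identities by specializing the rank--crank decompositions of Theorems \ref{SptBarRankCrank} and \ref{SptBar1RankCrank} at $z=i$, where $(1-i)(1-i^{-1})=2$. Since adding Theorems \ref{SptBar1RankCrank} and \ref{SptBar2RankCrank} and comparing with Theorem \ref{SptBarRankCrank} gives $\SB_1(z,q)+\SB_2(z,q)=\SB(z,q)$, it suffices to prove \eqref{TwoDissectionSBar} and \eqref{TwoDissectionS1Bar} and then subtract for \eqref{TwoDissectionS2Bar} (one can also read \eqref{TwoDissectionS2Bar} directly off Theorem \ref{SptBar2RankCrank}). Throughout write $\varphi(q)=\sum_{n=-\infty}^\infty q^{n^2}$, and I will use the classical facts $\varphi(-q)=\aqprod{q}{q}{\infty}/\aqprod{-q}{q}{\infty}$, $\aqprod{q}{q}{\infty}\aqprod{-q}{q}{\infty}=\aqprod{q^2}{q^2}{\infty}$, $\varphi(q)\varphi(-q)=\varphi(-q^2)^2$, and the Jacobi two-squares series $\varphi(q)^2=1+4\sum_{n\ge1}q^n/(1+q^{2n})$. (Alternatively one could substitute $z=i$ directly into the defining $q$-series for $\SB,\SB_1,\SB_2$, using $\aqprod{iq^k}{q}{\infty}\aqprod{-iq^k}{q}{\infty}=\aqprod{-q^{2k}}{q^2}{\infty}$ and a ${}_2\phi_1$ evaluation, but the rank--crank route is cleaner.)

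First I would handle the crank side. Since $\aqprod{iq^k}{q}{\infty}\aqprod{-iq^k}{q}{\infty}=\aqprod{-q^{2k}}{q^2}{\infty}$, formula \eqref{OverpartitionResidualCrankDef} at $z=i$ collapses to
\[
	\sum_{n=0}^\infty\sum_{m=-\infty}^\infty \overline{M}(m,n)i^m q^n = \frac{\aqprod{-q}{q}{\infty}\aqprod{q}{q}{\infty}}{\aqprod{-q^2}{q^2}{\infty}} = \frac{\aqprod{q^2}{q^2}{\infty}}{\aqprod{-q^2}{q^2}{\infty}} = \varphi(-q^2).
\]
In the ``extra'' Lambert series of Theorems \ref{SptBar2RankCrank} and \ref{SptBar1RankCrank} the denominators become $(1-iq^n)(1-i^{-1}q^n)=1+q^{2n}$; replacing $q$ by $-q$ in the two-squares series gives $\tfrac12+2\sum_{n\ge1}(-1)^nq^n/(1+q^{2n})=\tfrac12\varphi(-q)^2$, so that
\[
	\frac{\aqprod{-q}{q}{\infty}}{\aqprod{q}{q}{\infty}}\Parans{\frac12+\sum_{n=1}^\infty\frac{(1-i)(1-i^{-1})(-1)^nq^n}{(1-iq^n)(1-i^{-1}q^n)}} = \frac{1}{\varphi(-q)}\cdot\frac{\varphi(-q)^2}{2} = \frac{\varphi(-q)}{2}.
\]

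The crux is the rank side, i.e.\ evaluating \eqref{OverpartitionDysonRank} at $z=i$. There $(1-iq^n)(1-i^{-1}q^n)=1+q^{2n}$ and the prefactor is $1/\varphi(-q)$, giving
\[
	\sum_{n=0}^\infty\sum_{m=-\infty}^\infty \overline{N}(m,n)i^m q^n = \frac{1}{\varphi(-q)}\Parans{1+4\sum_{n=1}^\infty\frac{(-1)^nq^{n^2+n}}{1+q^{2n}}}.
\]
The summand $(-1)^nq^{n^2+n}/(1+q^{2n})$ is invariant under $n\mapsto-n$, so $1+4\sum_{n\ge1}(-1)^nq^{n^2+n}/(1+q^{2n})=2\sum_{n\in\mathbb{Z}}(-1)^nq^{n^2+n}/(1+q^{2n})$, and I would evaluate this bilateral series with the classical theta/partial-fraction identity
\[
	\sum_{n=-\infty}^\infty\frac{(-1)^nQ^{n(n+1)/2}}{1-zQ^n} = \frac{\aqprod{Q}{Q}{\infty}^2}{\aqprod{z}{Q}{\infty}\aqprod{Qz^{-1}}{Q}{\infty}}
\]
(a limiting case of Ramanujan's ${}_1\psi_1$ summation) at $Q=q^2$, $z=-1$, which yields $\tfrac12\varphi(-q^2)^2$. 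Hence $1+4\sum_{n\ge1}(-1)^nq^{n^2+n}/(1+q^{2n})=\varphi(-q^2)^2=\varphi(q)\varphi(-q)$, so the rank generating function at $z=i$ equals $\varphi(q)$. This evaluation---equivalently, the statement that the $z=i$ specialization of the overpartition rank generating function is $\varphi(q)$---is the step I expect to be the main obstacle; everything else is routine $q$-product bookkeeping.

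Finally I would assemble the pieces. Theorem \ref{SptBarRankCrank} at $z=i$ gives $2\SB(i,q)=\varphi(q)-\varphi(-q^2)=2\sum_{n\ge1}q^{n^2}-2\sum_{n\ge1}(-1)^nq^{2n^2}$, which is \eqref{TwoDissectionSBar}. Theorem \ref{SptBar1RankCrank} at $z=i$ gives $2\SB_1(i,q)=\tfrac12\varphi(q)-\tfrac12\varphi(-q)=2\sum_{n\ge1}q^{(2n-1)^2}$, which is \eqref{TwoDissectionS1Bar}. Subtracting, $\SB_2(i,q)=\SB(i,q)-\SB_1(i,q)=\sum_{n\ge1}q^{(2n)^2}-\sum_{n\ge1}(-1)^nq^{2n^2}$, which is \eqref{TwoDissectionS2Bar}; this agrees with the direct value $2\SB_2(i,q)=\tfrac12\varphi(q)+\tfrac12\varphi(-q)-\varphi(-q^2)$ coming from Theorem \ref{SptBar2RankCrank}.
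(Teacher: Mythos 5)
Your proposal is correct and follows essentially the same route as the paper: specialize the rank--crank decompositions at $z=i$ and reduce everything to the classical theta identities $\phi(q)\phi(-q)=\phi(-q^2)^2$, the Lambert series for $\phi(-q)^2$ and $\phi(-q^2)^2$, and $\phi(q)-\phi(-q)=4\sum_{n\ge1}q^{(2n-1)^2}$, then obtain $\SB_2(i,q)$ by subtraction. The only cosmetic difference is that you derive the key evaluation $1+4\sum_{n\ge1}(-1)^nq^{n^2+n}/(1+q^{2n})=\phi(-q^2)^2$ from a bilateral ${}_1\psi_1$-type partial-fraction identity, whereas the paper simply cites the corresponding entries from Ramanujan's notebooks.
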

Considering just $\sptBar{1}{n}$, we have
\begin{align}
	\sum_{n=1} q^{(2n-1)^2}
	&=
	\sum_{n=0}^\infty\sum_{m=-\infty}^\infty 
		N_{\SB_1}(m,n)i^mq^n
	\\
	&=
	\sum_{n=0}^\infty
	\Parans{ 
		N_{\SB_1}(0,4,n)  -N_{\SB_1}(2,4,n)
		+i(	N_{\SB_1}(1,4,n)- N_{\SB_1}(3,4,n))
	}q^n
	\\
	&=	
	\sum_{n=0}^\infty
	\Parans{ 
		N_{\SB_1}(0,4,n)  -N_{\SB_1}(2,4,n)
	}q^n.
\end{align}
But
\begin{align}
	\sptBar{1}{n} &=
		N_{\SB_1}(0,4,n) + N_{\SB_1}(1,4,n)
		+N_{\SB_1}(2,4,n) + N_{\SB_1}(3,4,n)
	\\
	&= N_{\SB_1}(0,4,n) + 2N_{\SB_1}(1,4,n)
		+N_{\SB_1}(2,4,n)
\end{align}
and so we see
\begin{align}
	\sptBar{1}{n} &\equiv 1 \pmod{2}
\end{align} 
if and only if $n$ is an odd square. The parity of $\sptBar{}{n}$ and 
$\sptBar{2}{n}$ follow in the same fashion.

In \cite{AGL} Andrews, the first author, and Liang also showed that
$N_{\mbox{\rm S}}(m,n)$, the coefficients in S$(z,q)$, to be nonnegative.
The same phenomenon occurs here.

\begin{theorem}\label{TheoremNonNegative}
For all $m$ and $n$ we have
$N_{\SB}(m,n)$,
$N_{\SB_1}(m,n)$,
and
$N_{\SB_2}(m,n)$
are nonnegative.
\end{theorem}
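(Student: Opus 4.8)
\textbf{Proof proposal for Theorem \ref{TheoremNonNegative}.}

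The plan is to show that each of $\SB(z,q)$, $\SB_1(z,q)$, $\SB_2(z,q)$ has a combinatorial interpretation as a generating function for vector partitions in which $z$ tracks a well-defined integer statistic (a ``crank'') and in which every vector partition is counted with weight $+1$; nonnegativity of the coefficients then follows immediately. First I would rewrite each defining series by expanding the three infinite products in the summand. Writing the typical term of $\SB(z,q)$ as
\begin{equation*}
	\frac{q^n \aqprod{-q^{n+1}}{q}{\infty}\aqprod{q^{n+1}}{q}{\infty}}
		{\aqprod{zq^n}{q}{\infty}\aqprod{z^{-1}q^n}{q}{\infty}},
\end{equation*}
I would interpret $\aqprod{q^{n+1}}{q}{\infty}$ via Euler as a signed generating function for partitions into distinct parts exceeding $n$ (this is the factor that will supply the ``$-1$'' parts of an ordinary spt-style construction and turn out to be harmless), $\aqprod{-q^{n+1}}{q}{\infty}$ as the generating function for partitions into distinct overlined parts exceeding $n$, the factor $q^n$ as a marked smallest part, and the two reciprocal factors $\aqprod{zq^n}{q}{\infty}^{-1}\aqprod{z^{-1}q^n}{q}{\infty}^{-1}$ as two ordinary partitions with all parts $\ge n$, one contributing $+1$ and the other $-1$ to the crank exponent of $z$. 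This realizes $\SB(z,q)$ as a generating function for a species of marked overpartition-vectors, with $z$ tracking the difference of the two part-counts; the only issue is the stray $\aqprod{q^{n+1}}{q}{\infty}$ in the numerator.

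The standard device (as in \cite{AGL} for $\mathrm{S}(z,q)$) is to pair the factor $\aqprod{q^{n+1}}{q}{\infty}$ against one of the reciprocal factors, say $\aqprod{zq^n}{q}{\infty}^{-1}$. Because $z$-independent cancellation between $\aqprod{q^{n+1}}{q}{\infty}$ and $\aqprod{q^n}{q}{\infty}^{-1}$ recovers a single factor $(1-q^n)^{-1}$, one expects after this pairing to obtain an expression of the shape
\begin{equation*}
	\SB(z,q) = \sum_{n=1}^\infty q^n
		\frac{\aqprod{-q^{n+1}}{q}{\infty}}{(1-zq^n)\aqprod{z^{-1}q^n}{q}{\infty}}
		\cdot (\text{a manifestly nonnegative series in } z,z^{-1},q),
\end{equation*}
where the remaining $z$-dependence is a finite or convergent product/sum with coefficients that are themselves nonnegative after collecting. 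I would carry this out carefully for $\SB$, then handle $\SB_1$ and $\SB_2$ by the identical manipulation on the series in \eqref{Section1Spt1Bar} and \eqref{Section1Spt2Bar} (with $q\mapsto q^2$ bookkeeping and the shift $n\mapsto 2n+1$ resp.\ $n\mapsto 2n$), since those summands have exactly the same structure. Alternatively, and perhaps more cleanly, one can use Theorems \ref{SptBarRankCrank}, \ref{SptBar2RankCrank}, \ref{SptBar1RankCrank} to write $(1-z)(1-z^{-1})\SB(z,q)$ etc.\ as differences of rank and crank generating functions and argue nonnegativity of the quotient directly, but the product-expansion route is the one that yields the combinatorial statistic the introduction promises.

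The main obstacle will be controlling the interaction of the ``$-1$'' contributions: both the numerator factor $\aqprod{q^{n+1}}{q}{\infty}$ and the $z^{-1}$-partition carry signs, and one must show that after all cancellation the net coefficient of each monomial $z^m q^n$ is $\ge 0$. Concretely, I would set up an injection (or a weight-preserving sign-reversing involution on the ``bad'' configurations) showing that the signed count of vector partitions with a fixed value of the crank statistic is nonnegative; the cleanest version is to exhibit, for each $n$, a bijection absorbing the distinct non-overlined parts $> n$ from $\aqprod{q^{n+1}}{q}{\infty}$ into the $z^{\pm1}$-partitions so that the surviving objects are genuine (unsigned) marked overpartitions with a crank, exactly as Chen--Ji--Zang \cite{Ch-Ji-Za} and Andrews--Garvan--Liang \cite{AGL} do in the ordinary case. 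One subtlety worth flagging: $\Mspt{n}$ is deliberately excluded from the statement, and indeed the $\STwoB$ analogue is not expected to have nonnegative coefficients, so the argument must genuinely use a feature special to the $\SB_j$ series (the presence of $\aqprod{-q^{n+1}}{q}{\infty}$ with the same modulus as $\aqprod{q^{n+1}}{q}{\infty}$, allowing the overlined parts to neutralize the signs) and should not go through verbatim for $\STwoB$.
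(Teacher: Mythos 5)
Your overall strategy --- rewrite each summand so that every factor manifestly has nonnegative coefficients in $z^{\pm1}$ and $q$ --- is the right one, and your closing observation about what is special to the $\SB_j$ series (the factor $\aqprod{-q^{n+1}}{q}{\infty}$ sharing its modulus with $\aqprod{q^{n+1}}{q}{\infty}$) is exactly the feature the paper exploits. But the core of your argument is missing. The pairing you actually propose --- cancelling $\aqprod{q^{n+1}}{q}{\infty}$ against $\aqprod{zq^n}{q}{\infty}^{-1}$ via ``$z$-independent cancellation'' --- does not exist: those two products share no common factors when $z\neq1$, and the ``manifestly nonnegative series'' you expect to be left over is precisely the object that has to be produced. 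Your fallback (divide the rank-minus-crank identities by $(1-z)(1-z^{-1})$ and ``argue nonnegativity of the quotient directly'') is not easier than the theorem itself; in the ordinary-partition case that inequality is the hard content of the result of Andrews, Garvan, and Liang \cite{AGL}, not a shortcut to it.

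The paper's proof is short and rests on two specific identities, neither of which appears in your sketch. First the two numerator products are merged, $\aqprod{q^{n+1}}{q}{\infty}\aqprod{-q^{n+1}}{q}{\infty}=\aqprod{q^{2n+2}}{q^2}{\infty}$, and after inserting $\aqprod{q^{2n}}{q}{\infty}/\aqprod{q^{2n}}{q}{\infty}$ one uses $\aqprod{q^{2n+2}}{q^2}{\infty}/\aqprod{q^{2n}}{q}{\infty}=1/\bigl((1-q^{2n})\aqprod{q^{2n+1}}{q^2}{\infty}\bigr)$, which is manifestly nonnegative. Second, the leftover $\aqprod{q^{2n}}{q}{\infty}$ is absorbed into \emph{both} $z$-denominators simultaneously via the Berkovich--Garvan identity \cite[Prop.~4.1]{BG},
\begin{equation*}
\frac{\aqprod{q^{2n}}{q}{\infty}}{\aqprod{zq^n}{q}{\infty}\aqprod{z^{-1}q^n}{q}{\infty}}
=\sum_{k=0}^{\infty}\frac{(z^{-1}q^n)^k}{\aqprod{q}{q}{k}\,\aqprod{zq^{n+k}}{q}{\infty}},
\end{equation*}
whose right-hand side is term-by-term nonnegative. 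Without this identity, or an equivalent combinatorial dissection, your plan cannot be completed as written: no sign-reversing involution is ever constructed, and the signed contributions of $\aqprod{q^{n+1}}{q}{\infty}$ are never actually cancelled. The one step of your sketch that does go through verbatim is the reduction of $\SB_1$ and $\SB_2$ to the $\SB$ computation by restricting $n$ to odd or even values.
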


In section $3$ we give combinatorial interpretations of the series 
$\SB(z,q)$, $\SB_1(z,q)$,  $\SB_2(z,q)$, and $\STwoB(z,q)$ in terms of 
weighted vector partitions and then prove Theorem \ref{TheoremNonNegative}. For 
$\SB(z,q)$, $\SB_1(z,q)$ and $\SB_2(z,q)$ 
we define the spt-crank in terms of marked overpartitions, see equation
(\ref{eq:sptcrankbdef}). In Theorem \ref{thm:mainthm} we give a combinatorial
interpretation of each of the spt-overpartition congruences in Theorem 
\ref{TheoremSptBarCongruences} in terms of marked overpartitions. 
In section $4$ we prove the theorems on expressing $\SB(z,q)$, $\SB_1(z,q)$, 
$\SB_2(z,q)$, and $\STwoB(z,q)$
in terms of the difference between a rank and crank. In section $5$
we prove the various dissections. In section 6 we conclude with remarks on the
nonnegativity of the coefficients of $\STwoB(z,q)$; a recent result
by Andrews, Chan, Kim, and Osburn \cite{ACKO} on the first moments 
for the rank and crank of overpartitions; and the remaining spt function of
\cite{BLO1}.

\section{Combinatorial Interpretations} \label{sec:combint}

In this section we provide combinatorial interpretations of the coefficients
in the series $\SB(z,q)$, $\SB_1(z,q)$, 
$\SB_2(z,q)$, and $\STwoB(z,q)$. For all four series we provide
an interpretation in terms of certain vector partitions with four components.
For the three series $\SB(z,q)$, $\SB_1(z,q)$, and 
$\SB_2(z,q)$ we give two additional interpretations --
one in terms of pairs of partitions and finally an interpretations
in terms of marked overpartitions. This final interpretation will give
interpretations of the congruences for overpartitions directly in terms of
the overpartitions themselves.

\subsection{Vector partitions and $\SB$-partitions} \label{subsec:vp}

The coefficients in the series $\SB(z,q)$, $\SB_1(z,q)$, 
$\SB_2(z,q)$, and $\STwoB(z,q)$ can be interpreted in terms of cranks 
of vector partitions. This can be done with vectors with 4 components, 
each a partition with certain restrictions.

We let $\overline{V} = \mathcal{D}\times\mathcal{P}\times\mathcal{P}\times\mathcal{D}$, 
where $\mathcal{P}$ denotes the set of all partitions and $\mathcal{D}$ 
denotes the set of all partitions into distinct parts. For a partition 
$\pi$ we let $s(\pi)$ denote the smallest part of $\pi$ 
(with the convention that the empty partition has smallest part $\infty$),
$\#(\pi)$ the number of parts in $\pi$,
and $|\pi|$ the sum of the parts of $\pi$. 
For $\vec{\pi} = (\pi_1,\pi_2,\pi_3,\pi_4) \in\overline{V}$, we define the weight
$\omega(\vec{\pi}) = (-1)^{\#(\pi_1)-1 }$, the 
$\mbox{crank}(\vec{\pi}) = \#(\pi_2) - \#(\pi_3)$,
and the norm $|\vec{\pi}|=|\pi_1|+|\pi_2|+|\pi_3|+|\pi_4|$. We say $\vec{\pi}$ is 
a vector partition of $n$ if $|\vec{\pi}|=n$.
 
We then let $\SB$ denote the subset of $\overline{V}$ given by
\begin{align}
	\SB = 
	\left\{
		(\pi_1,\pi_2,\pi_3,\pi_4)\in \overline{V}
		:
		1\le s(\pi_1)<\infty, s(\pi_1) \le s(\pi_2), s(\pi_1) \le s(\pi_3), s(\pi_1) < s(\pi_4)
	\right\}.
\end{align}

We let $\SB_1$ and $\SB_2$ denote the subsets of $\SB$ with
$s(\pi_1)$ odd and even, respectively.

We see then that the number of vector partitions of $n$ in $\SB$ 
with crank $m$ counted according to the weight $\omega$ is 
exactly $N_{\SB}(m,n)$. Similarly the number of vector partitions 
of $n$ in $\SB_1$ with crank $m$ counted according to 
the weight $\omega$ is $N_{\SB_1}(m,n)$, and the number of vector partitions 
of $n$ in in $\SB_2(m,n)$ with crank $m$ counted according to the weight $\omega$ 
is $N_{\SB_2}(m,n)$.

We let $n_o(\pi)$ and $n_e(\pi)$ denote the number of odd and even parts, respectively, of $\pi$.
We let $\STwoB$ denote the subset of $\SB$ given by
\begin{align}
	\STwoB
	&=
	\left\{
		(\pi_1,\pi_2,\pi_3,\pi_4)\in \SB
		:
		n_o(\pi_1) = 0,
		n_o(\pi_2) = 0,
      n_o(\pi_3) = 0,		
		n_e(\pi_4) = 0 
	\right\}.
\end{align}
Then $N_{\STwoB}(m,n)$ is the number of vector partitions from $\STwoB$ of $n$ with crank $m$ counted
according to the weight $\omega$.

For each of the four spt functions, we give an example to illustrate a congruence.

\begin{example}[n=3] The $4$ overpartitions $3$ of with smallest part 
not overlined are $3$, $2+1$, $\overline{2}+1$, and $1+1+1$. We have then
$\sptBar{}{3}=6$. There are $8$ vector partitions from $\SB$ of $3$.
These vector partitions along  with their weights and cranks
are given as follows. 
$$
	\begin{array}{cccc}
		\mbox{$\SB$-vector partition } & \mbox{weight} & \mbox{crank} & \mbox{(mod 3)}
		\\
		\mbox{[1, --, --, 2]}	&1		&	0	& 0
		\\
		\mbox{[1, --, 1+1, --]}	&1		& -2	& 1
		\\
		\mbox{[1, --, 2, --]}	&1		& -1	& 2
		\\
		\mbox{[1, 1, 1, --]}	&1		& 0	& 0
		\\
		\mbox{[1, 1+1, --, --]}	&1		& 2	& 2
		\\
		\mbox{[1, 2, --, --]}	&1		& 1	&1
		\\
		\mbox{[1+2, --, --]}	&-1	& 0	&0
		\\
		\mbox{[3, --, --, --]}	&	1	& 0	&0
	\end{array}
$$
Here we have used -- to indicate the empty partition.  
We see that
\begin{align}
	N_{\SB}(0,3,3)&=N_{\SB}(1,3,3)=N_{\SB}(2,3,3)
	=2
	= \frac{1}{3}\sptBar{}{3}.
\end{align}

We note that the overpartitions of $3$ all have smallest part odd, so that
$\sptBar{1}{3}=6$. Also all vector partitions from $\SB$ of $3$ are also 
from $\SB_1$. Thus we also have 
\begin{align}
	N_{\SB_1}(0,3,3)&=N_{\SB_1}(1,3,3)=N_{\SB_1}(2,3,3)=2
	= \frac{1}{3}\sptBar{1}{3}.
\end{align}
\end{example}

\begin{example}[n=4] The $2$ overpartitions of $4$ with smallest part even 
and not overlined are
$4$ and $2+2$, so $\sptBar{2}{4}=3$. There are $3$ vector partitions from 
$\SB_2$ of $4$. These vector partitions are as follows.
$$
	\begin{array}{cccc}
		\mbox{$\SB_2$-vector partition } & \mbox{weight} & \mbox{crank} & \mbox{(mod 3)}
		\\
		\mbox{[2, 2, --, --]}	&1		& 1 & 1
		\\
		\mbox{[2, --, 2, --]}	&1		& -1 & 2
		\\
		\mbox{[4, --, --, --]}	&1		& 0 & 0
	\end{array}
$$
We see that
\begin{align}
	N_{\SB_2}(0,3,4)&=N_{\SB_2}(1,3,4)=N_{\SB_2}(2,3,4)=1
	= \frac{1}{3}\sptBar{2}{4}.
\end{align}
\end{example}

\begin{example}[n=6]
The $3$ partitions of $6$ without repeated odd parts and smallest part even are
$6$, $4+2$, $2+2+2$, so that $\Mspt{6}=5$. There are $7$ vector partitions of $6$
from $\STwoB$. These vector partitions are as follows.
$$
	\begin{array}{cccc}
		\mbox{$\STwoB$-vector partition }	& \mbox{weight} & \mbox{crank} & \mbox{(mod 5)}
		\\
		\mbox{[2, --, 4, --]}		&1		& -1 & 4
		\\
		\mbox{[2, --, 2+2, --]}		&1		& -2 & 3
		\\
		\mbox{[2, 2, 2, --]}		&1		& 0 & 0
		\\
		\mbox{[2, 4, --, --]}		&1		& 1 & 1
		\\
		\mbox{[2, 2+2, --, --]}		&1		& 2 & 2
		\\
		\mbox{[4+2, --, --, --]}		&-1	& 0 & 0
		\\
		\mbox{[6, --, --, --]}		&1		& 0 & 0
	\end{array}
$$
We see that
\begin{align}
	N_{\STwoB}(0,5,6)&=N_{\STwoB}(1,5,6)=N_{\STwoB}(2,5,6)
	=N_{\STwoB}(3,5,6)=N_{\STwoB}(4,5,6) = 1
	= \frac{1}{5}\Mspt{6}.
\end{align}
\end{example}


\subsection{$\SPB$-partition pairs} \label{subsec:spp}

In this section we prove that
\beq
N_{\SB}(m, n) \ge 0,
\mylabel{eq:NSBineq}
\eeq
for all $m$, $n$ and provide a combinatorial interpretation in terms of
partition pairs. 

\subsubsection{Proof of nonnegativity} \label{subsubsec:nonneg}

\begin{align}
{\SB}(z,q) &= \sum_{n=1}^\infty \sum_m N_{\SB}(m,n) z^m q^n 
\mylabel{eq:nonnegeq}\\
         &= \sum_{n=1}^\infty \frac{q^n (q^{n+1};q)_\infty}
                            {(zq^n;q)_\infty (z^{-1}q^n;q)_\infty}
                        (-q^{n+1};q)_\infty \nonumber\\
         &= \sum_{n=1}^\infty \frac{q^n (q^{2n};q)_\infty}
                            {(zq^n;q)_\infty (z^{-1}q^n;q)_\infty}
                        \frac{(q^{2n+2};q^2)_\infty}
                             {(q^{2n};q)_\infty} \nonumber\\
&= \sum_{n=1}^\infty q^n
   \sum_{k=0}^\infty \frac{ (z^{-1}q^n)^k }
                          {(zq^{n+k};q)_\infty (q)_k}
   \times \frac{1}{(1-q^{2n})} \frac{1}{(q^{2n+1};q^2)_\infty}
\nonumber
\end{align}
by \cite[Prop. 4.1]{BG}. The inequality \eqn{NSBineq} clearly follows.
Replacing $n$ by $2n+1$ and $2n$ in the second line of (\ref{eq:nonnegeq}) 
gives $N_{\SB_1}(m,n)\ge 0$ and $N_{\SB_2}(m,n)\ge 0$, respectively.

\subsubsection{The $\sptcrank$ in terms of partition pairs} 
\label{subsubsec:sptcpp}

We define
\begin{align}
\SPB = \{ \vec{\lambda} = (\lambda_1,\lambda_2)\in\mathcal{P}\times\mathcal{P}
\, &: \, 0 < s(\lambda_1) \le s(\lambda_2) \mylabel{eq:SPBdef}\\
   & \mbox{and all parts of $\lambda_2$ that are $\ge 2s(\lambda_1)+1$
           are odd}\}.
\nonumber
\end{align}
First we show that
\beq
\sptb(n) = \sum_{\substack{ \vec{\lambda}\in{\SPB} \\
                 \abs{\vec{\lambda}}=\abs{\lambda_1}+\abs{\lambda_2}=n}} 1.
\mylabel{eq:sptbid}
\eeq
\begin{align}
&\sum_{n=1}^\infty \sptb(n) q^n = 
 \sum_{n=1}^\infty \frac{q^n (-q^{n+1};q)_\infty}{(1-q^n)^2 (q^{n+1};q)_\infty}
 \mylabel{eq:sptbid1}\\
&= \sum_{n=1}^\infty \frac{q^n}{(1-q^n)^2 (q^{n+1};q)_\infty}
                     \times \frac{(q^{2n+2};q^2)_\infty}{(q^{n+1};q)_\infty}
\nonumber\\
&= \sum_{n=1}^\infty \frac{q^n}{(q^{n};q)_\infty} \times
\frac{1}{(1-q^n)(1-q^{n+1}) \cdots (1-q^{2n}) (q^{2n+1};q^2)_\infty}
\nonumber\\
& = \sum_{n=1}^\infty 
\sum_{\substack{\lambda_1\in\mathcal{P} \\ s(\lambda_1) = n}} 
     q^{\abs{\lambda_1}}
\sum_{\substack{\lambda_2\in\mathcal{P} \\ s(\lambda_2) \ge n\\
     \mbox{\scriptsize all parts in $\lambda_2\ge2n+1$ are odd}}} 
     q^{\abs{\lambda_2}}
\nonumber\\
& = \sum_{n=1}^\infty 
\sum_{\substack{\vec{\lambda}=(\lambda_1,\lambda_2)\in\SPB \\
               s(\lambda_1) = n}} 
     q^{\abs{\vec{\lambda}}}
\nonumber\\
& = \sum_{\vec{\lambda}\in\SPB} q^{\abs{\vec{\lambda}}},
\end{align}
and \eqn{sptbid} follows.

We let $\SPB_1$ be the set of $\vec{\lambda}=(\lambda_1,\lambda_2)\in\SPB$
with $s(\lambda_1)$ odd and
let $\SPB_2$ be the set of $\vec{\lambda}=(\lambda_1,\lambda_2)\in\SPB$
with $s(\lambda_1)$ even. Then in the same fashion we have
\beq
\sptb_1(n) = \sum_{\substack{ \vec{\lambda}\in{\SPB_1} \\
                 \abs{\vec{\lambda}}=n}} 1,
\mylabel{eq:sptb1id}
\eeq
and
\beq
\sptb_2(n) = \sum_{\substack{ \vec{\lambda}\in{\SPB_2} \\
                 \abs{\vec{\lambda}}=n}} 1.
\mylabel{eq:sptb2id}
\eeq

Next we define a $\crankb$ of partition pairs 
$\vec{\lambda}=(\lambda_1,\lambda_2)\in\SPB$ by interpreting the coefficient
of $z^mq^n$ in \eqn{nonnegeq}.
For 
$\vec{\lambda}=(\lambda_1,\lambda_2)\in\SPB$ we define 
\beq
k(\vec{\lambda}) = \mbox{$\#$ of parts $j$ in $\lambda_2$ such that
$s(\lambda_1) \le j \le 2s(\lambda_1)-1$},
\mylabel{eq:kdef}
\eeq
and define
\beq
\crankb(\vec{\lambda}) = 
\begin{cases}
\mbox{($\#$ of parts of $\lambda_1\ge s(\lambda_1)+k) - k$} & \mbox{if $k>0$}\\
\mbox{($\#$ of parts of $\lambda_1) - 1$} & \mbox{if $k=0$,}
\end{cases}
\mylabel{eq:crankbdef}
\eeq
where $k=k(\vec{\lambda})$.  We have
\begin{theorem}
\mylabel{thm:SPthm}
\begin{align}
N_{\SB}(m,n) &= \mbox{$\#$ of $\vec{\lambda}=(\lambda_1,\lambda_2)\in\SPB$
                      with 
                 $\abs{\vec{\lambda}}=n$ and $\crankb(\vec{\lambda})=m$},
\mylabel{eq:SPthmi}\\
N_{\SB_1}(m,n) &= \mbox{$\#$ of $\vec{\lambda}=(\lambda_1,\lambda_2)\in\SPB_1$
                      with 
                 $\abs{\vec{\lambda}}=n$ and $\crankb(\vec{\lambda})=m$},
\mylabel{eq:SPthmii}\\
N_{\SB_2}(m,n) &= \mbox{$\#$ of $\vec{\lambda}=(\lambda_1,\lambda_2)\in\SPB_2$
                      with 
                 $\abs{\vec{\lambda}}=n$ and $\crankb(\vec{\lambda})=m$}.
\mylabel{eq:SPthmiii}
\end{align}
\end{theorem}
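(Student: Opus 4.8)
The plan is to prove the single equivalent generating function identity
\[
	\SB(z,q) = \sum_{\vec{\lambda}\in\SPB} z^{\crankb(\vec{\lambda})}\, q^{\abs{\vec{\lambda}}},
\]
from which \eqn{SPthmi} follows by reading off the coefficient of $z^m q^n$. Once this is in hand, \eqn{SPthmii} and \eqn{SPthmiii} are immediate: demanding $s(\lambda_1)$ odd (respectively even) on the right is exactly the restriction of the outer summation index $n$ below to odd (respectively even) values, and the parity of $s(\lambda_1)$ enters nowhere else. Rather than constructing a bijection by hand, I would evaluate the right-hand side by grouping the pairs $\vec{\lambda}=(\lambda_1,\lambda_2)\in\SPB$ according to the two statistics $n=s(\lambda_1)$ and $k=k(\vec{\lambda})$, and then match the result against the last line of the display \eqn{nonnegeq}, i.e. the expansion $\SB(z,q)=\sum_{n\ge1}q^n\bigl(\sum_{k\ge0}\frac{(z^{-1}q^n)^k}{(zq^{n+k};q)_\infty (q)_k}\bigr)\frac{1}{(1-q^{2n})(q^{2n+1};q^2)_\infty}$ that was obtained there from \cite[Prop.~4.1]{BG}.

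Fix $n\ge1$ and $k\ge0$ and collect the contribution to $\sum_{\vec{\lambda}}z^{\crankb(\vec{\lambda})}q^{\abs{\vec{\lambda}}}$ of those $\vec{\lambda}\in\SPB$ with $s(\lambda_1)=n$ and $k(\vec{\lambda})=k$. The parts of $\lambda_2$ split into the $k$ parts lying in $[n,2n-1]$, the parts equal to $2n$, and the odd parts exceeding $2n$; the last two families do not affect $\crankb$ and jointly contribute the factor $\frac{1}{(1-q^{2n})(q^{2n+1};q^2)_\infty}$, while the $k$ parts in $[n,2n-1]$ range over all size-$k$ multisets from an $n$-element set, contributing $q^{nk}\qbin{n-1+k}{k}$. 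Writing $\lambda_1$ as its forced smallest part $n$ (accounting for the leading $q^n$) together with an arbitrary partition $\alpha$ into parts $\ge n$, the definitions \eqn{kdef} and \eqn{crankbdef} give $\crankb(\vec{\lambda})=\#\{i:\alpha_i\ge n+k\}-k$ when $k>0$ (the forced part $n$ is then too small to count) and $\crankb(\vec{\lambda})=\#(\alpha)$ when $k=0$; splitting $\alpha$ at the value $n+k$ shows $\sum_{\alpha}z^{\#\{i:\alpha_i\ge n+k\}}q^{\abs{\alpha}}=\frac{1}{(q^n;q)_k\,(zq^{n+k};q)_\infty}$ for $k>0$, and $\sum_{\alpha}z^{\#(\alpha)}q^{\abs{\alpha}}=\frac{1}{(zq^n;q)_\infty}$ for $k=0$.

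Assembling the factors and applying the elementary identity $\qbin{n-1+k}{k}\frac{1}{(q^n;q)_k}=\frac{1}{(q)_k}$, the total contribution of the pair $(n,k)$ collapses to $q^n\frac{(z^{-1}q^n)^k}{(q)_k\,(zq^{n+k};q)_\infty}\cdot\frac{1}{(1-q^{2n})(q^{2n+1};q^2)_\infty}$, which is precisely the $(n,k)$ summand of the expansion recalled above; summing over all $n$ and $k$ completes the proof. I expect the delicate point to be the crank bookkeeping of the previous paragraph: verifying that the self-referential threshold $n+k$ in \eqn{crankbdef} is exactly where $\alpha$ should be cut, that the forced part $n$ of $\lambda_1$ never contributes to $\crankb$ when $k>0$ but is responsible for the ``$-1$'' when $k=0$, and that the Gaussian binomial counting the parts of $\lambda_2$ in $[n,2n-1]$ combines with the small parts of $\lambda_1$ to produce exactly $1/(q)_k$. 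Granted this, the restricted identities \eqn{SPthmii}--\eqn{SPthmiii} need no further work.
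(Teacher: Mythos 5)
Your proposal is correct and is essentially the paper's own proof run in the opposite direction: the paper expands the $(n,k)$ summand of \eqn{nonnegeq} via $\frac{(z^{-1}q^n)^k}{(q)_k}=\frac{z^{-k}q^{nk}\qbin{n+k-1}{k}}{(q^n;q)_k}$ and reads off the $\crankb$ interpretation, while you build the combinatorial generating function and collapse it to that same summand. The crank bookkeeping you flag as the delicate point checks out exactly as you describe (the forced part $n$ of $\lambda_1$ lies below the threshold $n+k$ when $k>0$ and supplies the $-1$ when $k=0$), so nothing further is needed.
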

\begin{proof}
From \eqn{nonnegeq} we have 
\begin{align}
{\SB}(z,q) &= \sum_{n=1}^\infty \sum_m N_{\SB}(m,n) z^m q^n 
\mylabel{eq:nonnegeqAGAIN}\\
&= \sum_{n=1}^\infty 
   \sum_{k=0}^\infty 
   \frac{q^n}{(1-q^n) \cdots (1-q^{n+k-1}) (zq^{n+k};q)_\infty}
   \times z^{-k} q^{nk} \qbin{n+k-1}{k} 
   \times \frac{1}{(1-q^{2n})} \frac{1}{(q^{2n+1};q^2)_\infty}
\nonumber\\
&= \sum_{n=1}^\infty 
   \frac{q^n}{(zq^{n};q)_\infty}
   \times \frac{1}{(1-q^{2n})} \frac{1}{(q^{2n+1};q^2)_\infty}
\nonumber\\
&\quad +  \sum_{n=1}^\infty 
   \sum_{k=1}^\infty 
   \frac{q^n}{(1-q^n) \cdots (1-q^{n+k-1}) (zq^{n+k};q)_\infty}
   \times z^{-k} q^{nk} \qbin{n+k-1}{k} 
   \times \frac{1}{(1-q^{2n})} \frac{1}{(q^{2n+1};q^2)_\infty}
\nonumber
.
\end{align}
We note that the $q$-binomial coefficient $\qbin{n+k-1}{k}$ is the
generating function for partitions into parts $\le n-1$ with
number of parts $\le k$. Thus we see that 
$q^{nk} \qbin{n+k-1}{k}$ is the generating function for partitions into
exactly $k$ parts $j$, where $n \le j \le 2n-1$. Hence
\begin{align}
{\SB}(z,q) &= 
\sum_{n=1}^\infty 
\sum_{\substack{\vec{\lambda}=(\lambda_1,\lambda_2)\in\SPB \\
               s(\lambda_1) = n,\ k(\vec{\lambda})=0}} 
      z^{\#(\lambda_1)-1} q^{\abs{\vec{\lambda}}}
\mylabel{eq:SBid2}
+ \sum_{n=1}^\infty 
\sum_{k=1}^\infty
\sum_{\substack{\vec{\lambda}=(\lambda_1,\lambda_2)\in\SPB \\
               s(\lambda_1) = n,\ k(\vec{\lambda})=k}} 
      z^{\crankb(\vec{\lambda})} q^{\abs{\vec{\lambda}}}
\\
&= 
\sum_{\vec{\lambda}\in\SPB} 
      z^{\crankb(\vec{\lambda})} q^{\abs{\vec{\lambda}}}.
\end{align}
The result \eqn{SPthmi} follows. The results
\eqn{SPthmii}, \eqn{SPthmiii}, follow in a similar fashion.
\end{proof}

\subsubsection{Examples} 
\label{subsubsec:egs}
We illustrate our combinatorial interpretation of each
$\sptb$, $\sptb_2$, $\sptb_2$ congruence in terms of the $\crankb$
of $\SPB$-partition pairs.

\begin{example}[$n=3$]
The overpartitions of $3$ with smallest parts not overlined are
$3$, $2+1$, $\overline{2}+1$, $1+1+1$ so that $\sptb(3)=6$.
There are $6$ $\SPB$-partition pairs of $3$.
$$
\begin{array}{cccc}
\mbox{$\SPB$-partition pair} &k   &\crankb  & \pmod{3} \\
\mbox{[3, --]}               &0     &0          &0           \\
\mbox{[2+1, --]}               &0     &1          &1           \\
\mbox{[1+1+1, --]}               &0     &2          &2           \\
\mbox{[1+1, 1]}               &1     &-1          &2           \\
\mbox{[1, 1+1]}               &2     &-2          &1           \\
\mbox{[1, 2]}               &0     &0          &0           \\
\end{array}
$$
We see that
$$
N_{\SB}(0,3,3) = N_{\SB}(1,3,3) = N_{\SB}(2,3,3) = 2 = \frac{\sptb(3)}{3}.
$$
\end{example}

\begin{example}[$n=5$]
There are $10$ overpartitions of $5$ with smallest parts odd
and not overlined:
$$
\begin{array}{lllll}
5,      &4+1,  &\overline{4}+1,    &3+1+1,     &\overline{3}+1+1 \\
2+2+1,  &\overline{2}+2+1,  &2+1+1+1,  &\overline{2}+1+1+1, &1+1+1+1+1,
\end{array}
$$
so that $\sptb_1(5)=20$. There are 20 $\SPB_1$-partitions pairs of $5$.

$$
\begin{array}{cccc}
\mbox{$\SPB_1$-partition pair} &k   &\crankb  & \pmod{5} \\
\mbox{[1, 1+1+1+1]} & 4 & -4 & 1 \\
\mbox{[1, 2+1+1]} & 2 & -2 & 3 \\
\mbox{[1, 2+2]} & 0 & 0 & 0 \\
\mbox{[1, 3+1]} & 1 & -1 & 4 \\
\mbox{[1+1, 1+1+1]} & 3 & -3 & 2 \\
\mbox{[1+1, 2+1]} & 1 & -1 & 4 \\
\mbox{[1+1, 3]} & 0 & 1 & 1 \\
\mbox{[1+1+1, 1+1]} & 2 & -2 & 3 \\
\mbox{[1+1+1, 2]} & 0 & 2 & 2 \\
\mbox{[2+1, 1+1]} & 2 & -2 & 3 \\
\mbox{[2+1, 2]} & 0 & 1 & 1 \\
\mbox{[1+1+1+1, 1]} & 1 & -1 & 4 \\
\mbox{[2+1+1, 1]} & 1 & 0 & 0 \\
\mbox{[3+1, 1]} & 1 & 0 & 0 \\
\mbox{[1+1+1+1+1, --]} & 0 & 4 & 4 \\
\mbox{[2+1+1+1, --]} & 0 & 3 & 3 \\
\mbox{[2+2+1, --]} & 0 & 2 & 2 \\
\mbox{[3+1+1, --]} & 0 & 2 & 2 \\
\mbox{[4+1, --]} & 0 & 1 & 1 \\
\mbox{[5, --]} & 0 & 0 & 0 
\end{array}
$$
We see that
$$
N_{\SB_1}(0,5,5) = N_{\SB_1}(1,5,5) = N_{\SB_1}(2,5,5) 
=N_{\SB_1}(3,5,5) = N_{\SB_1}(4,5,5) = 4 = \frac{\sptb_1(5)}{5}.
$$
\end{example}

\begin{example}[$n=8$]
There are $9$ overpartitions of $8$ with smallest parts even
and not overlined:
$$
\begin{array}{lllll}
8,      &6+2,  &\overline{6}+2,    &3+3+2,     &\overline{3}+3+2 \\
4+4,  &4+2+2,  &\overline{4}+2+2,  &2+2+2+2, &                  
\end{array}
$$
so that $\sptb_2(8)=15$. There are 15 $\SPB_2$-partitions pairs of $8$.

$$
\begin{array}{cccc}
\mbox{$\SPB_2$-partition pair} &k   &\crankb  & \pmod{5} \\
\mbox{[2, 2+2+2]} & 3 & -3 & 2 \\
\mbox{[2, 3+3]} & 2 & -2 & 3 \\
\mbox{[2, 4+2]} & 1 & -1 & 4 \\
\mbox{[2+2, 2+2]} & 2 & -2 & 3 \\
\mbox{[2+2, 4]} & 0 & 1 & 1 \\
\mbox{[4, 4]} & 1 & -1 & 4 \\
\mbox{[3+2, 3]} & 1 & 0 & 0 \\
\mbox{[2+2+2, 2]} & 1 & -1 & 4 \\
\mbox{[4+2, 2]} & 1 & 0 & 0 \\
\mbox{[2+2+2+2, --]} & 0 & 3 & 3 \\
\mbox{[3+3+2, --]} & 0 & 2 & 2 \\
\mbox{[4+2+2, --]} & 0 & 2 & 2 \\
\mbox{[4+4, --]} & 0 & 1 & 1 \\
\mbox{[6+2, --]} & 0 & 1 & 1 \\
\mbox{[8, --]} & 0 & 0 & 0 
\end{array}
$$
We see that
$$
N_{\SB_2}(0,5,8) = N_{\SB_2}(1,5,8) = N_{\SB_2}(2,5,8) 
=N_{\SB_2}(3,5,8) = N_{\SB_2}(4,5,8) = 3 = \frac{\sptb_2(8)}{5}.
$$
\end{example}

\subsection{SPT-crank for marked overpartitions} \label{subsec:sptcmop}

Andrews, Dyson and Rhoades \cite{An-Dy-Rh}
defined a \textit{marked partition} as a pair $(\lambda,k)$
where $\lambda$ is a partition and $k$ is an integer identifying
one of its smallest parts; i.e.\ $k=1$, $2$, \dots, $\nu(\lambda)$,
where $\nu(\lambda)$ is the number of smallest parts of $\lambda$.
They asked for a statistic like the crank which would divide
the relevant marked partitions into $t$ equal classes for $t=5$, $7$, $13$
thus explaining the congruences 
(\ref{TheoremSptCongruence5np4}),
(\ref{TheoremSptCongruence7np5}),
(\ref{TheoremSptCongruence13np6}).
This problem was solved
by Chan, Ji and Zang \cite{Ch-Ji-Za} for the cases $t=5$, $7$.
They defined an spt-crank for double marked partitions and found a bijection
between double marked partitions and marked partitions. It is an open
problem to define the spt-crank directly in terms of marked partitions.
In this section we solve the analogous problem for overpartitions by 
defining a statistic on marked overpartitions.

\subsubsection{Definition of $\sptcrank$ for marked overpartitions} 
\label{subsubsec:sptcbdef}

We define a \textit{marked overpartition} of $n$ as a pair
$(\pi,j)$ where $\pi$ is an overpartition of $n$ in which the smallest
part is not overlined and $j$ is an integer $1 \le j \le \nu(\pi)$,
where as above $\nu(\pi)$ is the number of smallest parts of $\pi$. It is
clear that
\beq
\sptb(n) = \mbox{$\#$ of marked overpartitions $(\pi,j)$ of $n$.}
\mylabel{eq:sptbeq}
\eeq
For example, there are $6$ marked overpartitions of $3$:
$$
\begin{array}{lllll}
(\overline{2}+1,1)      &(2+1,1),      &(3,1), \\
(1+1+1,1)      &(1+1+1,2),      &(1+1+1,3), \\
\end{array}
$$
so that $\sptb(3)=6$. 

To define the $\sptcrank$ of a marked overpartition we first need
to define a function $k(m,n)$. For a positive integer $m$ we write
$$
m = b(m) \, 2^{j(m)},
$$
where $b(m)$ is odd and $j(m)\ge0$. For integers $m,n$ such that $m\ge n+1$, 
we define $j_0(m,n)$ to be the smallest nonnegative integer
$j_0$ such that
$$
b(m)2^{j_0} \ge n+1.
$$
We define
\beq
k(m,n) = 
\begin{cases}
0,             & \mbox{if $b(m) \ge 2n$,}\\
2^{j(m)-j_0(m,n)},     & \mbox{if $b(m)\,2^{j_0(m,n)} < 2n$,}\\
0,             & \mbox{if $b(m)2^{j_0(m,n)} = 2n$.}
\end{cases}
\mylabel{eq:kdefint}
\eeq
We note that if $j_0(m,n)\ge 1$ then $b(m)2^{j_0(m,n)} \le  2n$ so that the
function $k(m,n)$ is well-defined. For a partition
$$
\pi\,:\, m_1 + m_2 + \cdots + m_a
$$
into distinct parts
$$
m_1 > m_2 > \cdots > m_a \ge n+1
$$
we define the function
\beq
k(\pi,n) = \sum_{j=1}^a k(m_j,n) = \sum_{m\in\pi} k(m,n).
\mylabel{eq:kdefdptn}
\eeq
For a marked overpartition $(\pi,j)$ we let 
$\pi_1$ be the partition formed by the non-overlined parts of $\pi$, 
$\pi_2$ be the partition (into distinct parts) formed by the overlined parts 
of $\pi$, so that
$$
s(\pi_2) > s(\pi_1).
$$
We define a function
\beq
\overline{k}(\pi,j) = \nu(\pi_1) - j + k(\pi_2,s(\pi_1)).
\mylabel{eq:kbdef}
\eeq
Finally we can now define
\beq
\sptcrank(\pi,j) =                 
\begin{cases}
\mbox{($\#$ of parts of $\pi_1\ge s(\pi_1)+\kb) - \kb$}, & 
\mbox{if $\kb=\kb(\pi,j)>0$},\\
\mbox{($\#$ of parts of $\pi_1) - 1$} & 
\mbox{if $\kb=\kb(\pi,j)=0$.}
\end{cases}
\mylabel{eq:sptcrankbdef}
\eeq
We state our main theorem.
\begin{theorem}
\mylabel{thm:mainthm}
\begin{enumerate}
\item[(i)]
The residue of the $\sptcrank$ mod $3$ divides the marked overpartitions
of $3n$ into $3$ equal classes.
\item[(ii)]
The residue of the $\sptcrank$ mod $3$ divides the marked overpartitions
of $3n$ and of $3n+1$ with smallest part even into $3$ equal classes.
\item[(iii)]
The residue of the $\sptcrank$ mod $5$ divides the marked overpartitions
of $5n+3$ with smallest part even into $5$ equal classes.
\item[(iv)]
The residue of the $\sptcrank$ mod $3$ divides the marked overpartitions
of $3n$ with smallest part odd into $3$ equal classes.
\item[(v)]
The residue of the $\sptcrank$ mod $5$ divides the marked overpartitions
of $5n$ with smallest part odd into $5$ equal classes.
\item[(vi)]
The residue of the $\sptcrank$ mod $4$ divides the marked overpartitions
of $n$ into $4$ classes with $2$ classes of equal size and the remaining $2$ classes
are of equal size unless $n$ is a square or twice a square in which case the remaining
$2$ classes differ in size by exactly $1$.
\item[(vii)]
The residue of the $\sptcrank$ mod $4$ divides the marked overpartitions
of $n$ with smallest part odd into $4$ classes with $2$ classes of equal size and 
the remaining $2$ classes
are of equal size unless $n$ is an odd square in which case the remaining
$2$ classes differ in size by exactly $1$.
\item[(viii)]
The residue of the $\sptcrank$ mod $4$ divides the marked overpartitions
of $n$ with smallest part even into $4$ classes with $2$ classes of equal size and 
the remaining $2$ classes
are of equal size unless $n$ is an even square or twice a square in which case the remaining
$2$ classes differ in size by exactly $1$.
\end{enumerate}
\end{theorem}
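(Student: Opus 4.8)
\emph{Strategy.} The plan is to deduce the whole statement from the $\SPB$-partition-pair interpretation of Theorem~\ref{thm:SPthm}, the dissections already recorded above, and the $z=i$ evaluations of Theorem~\ref{TheoremTwoDissectionSBars}. The one genuinely new ingredient I would supply is an explicit norm- and $\crankb$-preserving bijection between marked overpartitions and $\SPB$-partition pairs.

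\emph{Step 1: the bijection.} Given a marked overpartition $(\pi,j)$ of $n$, I would write $\pi=\pi_1\cup\pi_2$ with $\pi_1$ its non-overlined part and $\pi_2$ its (distinct) overlined part, and set $s=s(\pi_1)$, $\nu=\nu(\pi_1)$. Let $\lambda_1$ be $\pi_1$ with $\nu-j$ of its smallest parts deleted (so $s(\lambda_1)=s$ and $\nu(\lambda_1)=j$), and let $\lambda_2$ consist of $\nu-j$ copies of $s$ together with the partition obtained from $\pi_2$ by replacing each part $m$ with $2^{\,j(m)-j_0(m,s)}$ copies of $b(m)2^{\,j_0(m,s)}$. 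By the three cases of \eqn{kdefint}, this last step --- a Glaisher-type refinement keyed to $s$ --- sends each part of $\pi_2$ to copies of a value lying in $[s+1,2s-1]$, or equal to $2s$, or odd and $\ge 2s+1$; hence $(\lambda_1,\lambda_2)\in\SPB$ and $|\vec{\lambda}|=n$, and $\pi$ having smallest part odd (resp.\ even) corresponds to $\vec{\lambda}\in\SPB_1$ (resp.\ $\SPB_2$). The map is a bijection: the multiplicity of $s$ in $\lambda_2$ recovers $\nu-j$ (the refined $\pi_2$ never produces a part equal to $s$), and the multiplicity of any value $v>s$ in $\lambda_2$, read in binary, recovers exactly which of $v,2v,4v,\dots$ belong to the distinct partition $\pi_2$. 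Finally the refinement produces precisely $k(\pi_2,s)$ parts of $\lambda_2$ in $[s+1,2s-1]$, so $k(\vec{\lambda})=(\nu-j)+k(\pi_2,s)=\overline{k}(\pi,j)$ by \eqn{kbdef}, and then \eqn{crankbdef} and \eqn{sptcrankbdef} agree term by term, so $\crankb(\vec{\lambda})=\sptcrank(\pi,j)$.

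\emph{Step 2: statements (i)--(v).} Combining Step 1 with Theorem~\ref{thm:SPthm}, the number of marked overpartitions of $n$ with $\sptcrank\equiv k\pmod{t}$ is $N_{\SB}(k,t,n)$, and after restricting to smallest part odd or even it is $N_{\SB_1}(k,t,n)$ or $N_{\SB_2}(k,t,n)$. The dissection matchings already obtained --- $\overline{N}_{0,3}-\overline{M}_{0,3}=0$; $\frac{\overline{N}_{0,3}}{2}-A_0=0$ and $\frac{\overline{N}_{0,5}}{2}-B_0=0$; $\frac{\overline{N}_{0,3}}{2}+A_0-\overline{M}_{0,3}=0$, $\frac{\overline{N}_{1,3}}{2}+A_1-\overline{M}_{1,3}=0$, $\frac{\overline{N}_{3,5}}{2}+B_3-\overline{M}_{3,5}=0$ --- together with Theorems~\ref{SptBarRankCrank}, \ref{SptBar1RankCrank}, and~\ref{SptBar2RankCrank} show that the pertinent coefficients of $\SB(\zeta_3,q)$, $\SB_1(\zeta_3,q)$, $\SB_1(\zeta_5,q)$, $\SB_2(\zeta_3,q)$, $\SB_2(\zeta_5,q)$ vanish. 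Since $1+x+x^2$ and $1+x+x^2+x^3+x^4$ are the minimal polynomials of $\zeta_3$ and $\zeta_5$, this forces $N_{\SB_\bullet}(0,t,\cdot)=\dots=N_{\SB_\bullet}(t-1,t,\cdot)$ on the relevant arithmetic progressions, which is exactly (i)--(v).

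\emph{Step 3: statements (vi)--(viii), and the main obstacle.} Writing $i^m$ according to $m$ modulo $4$, Theorem~\ref{TheoremTwoDissectionSBars} gives $N_{\SB}(1,4,n)=N_{\SB}(3,4,n)$ together with $N_{\SB}(0,4,n)-N_{\SB}(2,4,n)=[q^n]\bigl(\sum_{k\ge1}q^{k^2}-\sum_{k\ge1}(-1)^kq^{2k^2}\bigr)$; this coefficient is $0$ unless $n$ is a square or twice a square --- mutually exclusive since $\sqrt{2}$ is irrational --- in which case it is $\pm1$. Passing through Step 2 then yields (vi), and (vii), (viii) follow identically from $\SB_1(i,q)=\sum_{k\ge1}q^{(2k-1)^2}$ and $\SB_2(i,q)=\sum_{k\ge1}q^{(2k)^2}-\sum_{k\ge1}(-1)^kq^{2k^2}$. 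The principal difficulty is Step 1: checking that the $k(m,n)$-governed binary/Glaisher refinement, with the mark $j$ encoded by copies of $s(\pi_1)$, is genuinely a norm-preserving bijection onto $\SPB$ that carries $\sptcrank$ to $\crankb$ --- in particular that the boundary values $2s(\pi_1)$ and the odd values $\ge 2s(\pi_1)+1$ output by the refinement are matched exactly against the three cases of \eqn{kdefint}.
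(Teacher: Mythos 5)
Your proposal is correct and follows essentially the same route as the paper: your Step 1 is precisely the paper's bijection $\Phi$ built from the extended Glaisher map $\Psi_{s(\pi_1)}$ (with the mark $j$ encoded by transferring $\nu-j$ copies of $s(\pi_1)$ into $\lambda_2$), and Steps 2--3 are the paper's reduction via Theorem \ref{thm:SPthm}, the listed dissection identities, and Theorem \ref{TheoremTwoDissectionSBars}. The only detail worth spelling out in a final write-up is that distinct parts of a partition in $\mathcal{P}_s$ have distinct odd parts $b(\cdot)$, which is what makes the binary reconstruction of $\pi_2$ unambiguous.
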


\subsubsection{Examples} 
\label{subsubsec:sptcbegs}

\begin{example}[$n=3$]
There are $6$ marked overpartitions of $3$ so that $\sptb(3)=6$.
$$
\begin{array}{cccccccc}
(\pi,j) &\pi_1 &\pi_2 &\nu(\pi_1) &k(\pi_2,s(\pi_1))&\kb &\sptcrank &\pmod{3}\\
(\ov{2}+1,1)   & 1 & 2 & 1 & 0 & 0 & 0 & 0\\
(1+1+1,1) & 1+1+1 & \dd & 3 & 0 & 2 & -2 & 1\\
(1+1+1,2) & 1+1+1 & \dd & 3 & 0 & 1 & -1 & 2\\
(1+1+1,3) & 1+1+1 & \dd & 3 & 0 & 0 & 2 & 2\\
(2+1,1) & 2+1 & \dd & 1 & 0 & 0 & 1 & 1\\
(3,1)  & 3 & \dd & 1 & 0 & 0 & 0 & 0
\end{array}
$$
We see that the residue of the $\sptcrank\pmod{3}$ divides the marked 
overpartitions of $3$ into $3$ equal classes. This illustrates
Theorem \thm{mainthm}(i).
\end{example}

\begin{example}[$n=5$]
There are $15$ marked overpartitions of $8$ with smallest part even so that $\sptb_2(8)=15$.
$$
\begin{array}{ccccccccc}
(\pi,j) &\pi_1 &\pi_2 &\nu(\pi_1) &k(\pi_2,s(\pi_1))&\kb &\sptcrank & \pmod{4} &\pmod{5}\\
(\ov{6}+2,1) & 2  & 6  & 1 & 2  & 2  & -2  & 2  &3\\
(\ov{4}+2+2,1) & 2+2  & 4  & 2 & 0  & 1  & -1 &3  &4\\
(\ov{4}+2+2,2) & 2+2  & 4  & 2 & 0  & 0  & 1 & 1  &1\\
(\ov{3}+3+2,1) & 3+2  & 3  & 1 & 1  & 1  & 0 &0  &0\\
(2+2+2+2,1) & 2+2+2+2  & \dd  & 4 & 0  & 3  & -3 &1  &2\\
(2+2+2+2,2) & 2+2+2+2  & \dd  & 4 & 0  & 2  & -2 &2  &3\\
(2+2+2+2,3) & 2+2+2+2  & \dd  & 4 & 0  & 1  & -1 &3  &4\\
(2+2+2+2,4) & 2+2+2+2  & \dd  & 4 & 0  & 0  & 3 &3  &3\\
(3+3+2,1) & 3+3+2  & \dd  & 1 & 0  & 0  & 2 &2  &2\\
(4+2+2,1) & 4+2+2  & \dd  & 2 & 0  & 1  & 0 &0  &0\\
(4+2+2,2) & 4+2+2  & \dd  & 2 & 0  & 0  & 2 &2  &2\\
(6+2,1) & 6+2  & \dd  & 1 & 0  & 0  & 1  &1 &1\\
(4+4,1) & 4+4  & \dd  & 2 & 0  & 1  & -1 &3  &4\\
(4+4,2) & 4+4  & \dd  & 2 & 0  & 0  & 1 &1  &1\\
(8,1) & 8  & \dd  & 1 & 0  & 0  & 0 &0  &0\\
\end{array}
$$
We see that the residue of the $\sptcrank\pmod{5}$ divides the marked 
overpartitions of $8$ with even smallest part into $5$ equal classes. This illustrates
Theorem \thm{mainthm}(iii). We see that the residue of the $\sptcrank\pmod{4}$ divides the marked 
overpartitions of $8$ with even smallest part into $4$ classes, $2$ of which are both
of size $4$ and the other two of which are of sizes $3$ and $4$. This illustrates Theorem
\thm{mainthm}(viii), noting that $8$ is twice a square.
\end{example}

\begin{example}[$n=5$]
There are $20$ marked overpartitions of $5$ with smallest part odd so that $\sptb_1(5)=20$.
$$
\begin{array}{ccccccccc}
(\pi,j) &\pi_1 &\pi_2 &\nu(\pi_1) &k(\pi_2,s(\pi_1))&\kb &\sptcrank &\pmod{4} &\pmod{5}\\
(\ov{4}+1,1) & 1  & 4  & 1 & 0  & 0  & 0  &0 &0\\
(\ov{3}+1+1,1) & 1+1  & 3  & 2 & 0  & 1  & -1 &3   &4\\
(\ov{3}+1+1,2) & 1+1  & 3  & 2 & 0  & 0  & 1  &1 &1\\
(\ov{2}+1+1+1,1) & 1+1+1  & 2  & 3 & 0  & 2  & -2  &2 &3\\
(\ov{2}+1+1+1,2) & 1+1+1  & 2  & 3 & 0  & 1  & -1 &3  &4\\
(\ov{2}+1+1+1,3) & 1+1+1  & 2  & 3 & 0  & 0  & 2 &2  &2\\
(\ov{2}+2+1,1) & 2+1  & 2  & 1 & 0  & 0  & 1  &1 &1\\
(1+1+1+1+1,1) & 1+1+1+1+1  & \dd  & 5 & 0  & 4  & -4 &0  &1\\
(1+1+1+1+1,2) & 1+1+1+1+1  & \dd  & 5 & 0  & 3  & -3 &1  &2\\
(1+1+1+1+1,3) & 1+1+1+1+1  & \dd  & 5 & 0  & 2  & -2 &2  &3\\
(1+1+1+1+1,4) & 1+1+1+1+1  & \dd  & 5 & 0  & 1  & -1 &3  &4\\
(1+1+1+1+1,5) & 1+1+1+1+1  & \dd  & 5 & 0  & 0  & 4  &0 &4\\
(2+1+1+1,1) & 2+1+1+1  & \dd  & 3 & 0  & 2  & -2 &2  &3\\
(2+1+1+1,2) & 2+1+1+1  & \dd  & 3 & 0  & 1  & 0 &0  &0\\
(2+1+1+1,3) & 2+1+1+1  & \dd  & 3 & 0  & 0  & 3 &3  &3\\
(2+2+1,1) & 2+2+1  & \dd  & 1 & 0  & 0  & 2 &2  &2\\
(3+1+1,1) & 3+1+1  & \dd  & 2 & 0  & 1  & 0 &0  &0\\
(3+1+1,2) & 3+1+1  & \dd  & 2 & 0  & 0  & 2 &2  &2\\
(4+1,1) & 4+1  & \dd  & 1 & 0  & 0  & 1 &1  &1\\
(5,1) & 5  & \dd  & 1 & 0  & 0  & 0  &0 &0
\end{array}
$$
We see that the residue of the $\sptcrank\pmod{5}$ divides the marked 
overpartitions of $5$ with odd smallest part into $5$ equal classes. This illustrates
Theorem \thm{mainthm}(v). We see that the residue of the $\sptcrank\pmod{4}$ divides the marked
overpartitions of $5$ with odd smallest part into $4$ classes, $2$ of which are both
of size $4$ and the other two of which are both of size $6$. This illustrates Theorem
\thm{mainthm}(vii), noting that $5$ is not an odd square.
\end{example}

\subsubsection{Proof of the main result} 
\label{subsubsec:proofmain}

\begin{bij}
\mylabel{bij:PHI}
Let $\mathcal{M}$ denote the set of marked overpartitions.
There is 
a weight-preserving bijection 
$$
\Phi\,:\, \mathcal{M} \longrightarrow \SPB
$$
such that,
\beq
\kb(\pi,j) = k(\vec{\lambda}),
\mylabel{eq:kid}
\eeq
and
\beq
\sptcrank(\pi,j) = \crankb(\vec{\lambda}),
\mylabel{eq:sptccid}
\eeq
where
$$
\vec{\lambda} = (\lambda_1,\lambda_2) = \Phi(\pi,j).
$$
\end{bij}
Once this theorem is proved, the main result Theorem \thm{mainthm} 
will then follow from Theorems
\ref{SptBarRankCrank}, \ref{SptBar2RankCrank}, \ref{SptBar1RankCrank}, and
\ref{thm:SPthm} and the appropriate
dissections listed in Section \ref{sec:statementOfResults}.

Before we can construct the bijection $\Phi$,
we need to extend Euler's Theorem that the number of partitions of $n$ into
distinct parts equals the number of partitions of $n$ into odd parts.
Let $n$ be a nonnegative integer. Let $\mathcal{D}_n$ denote the set of
partitions into distinct parts $\ge n+1$. Let $\mathcal{P}_n$ denote the
set of partitions into parts $\ge n+1$ in which all parts $> 2n$ are odd.
Then we have

\begin{theorem}
\mylabel{thm:ExtendEuler}
Let $n\ge0$ and $\ell\ge1$. Then
the number of partitions of $\ell$ from $\mathcal{D}_n$ equals the number of
partitions of $\ell$ from $\mathcal{P}_n$.
\begin{remark}
Euler's Theorem is the case $n=0$.
\end{remark}
\begin{proof}
\begin{align}
& 1 + \sum_{\pi\in\mathcal{D}_n} q^{\abs{\pi}} = \prod_{j=n+1}^\infty (1 + q^j)
\mylabel{eq:proofEE}\\
&= \prod_{j=n+1}^\infty \frac{(1+q^j)(1-q^j)}{(1-q^j)}
= \prod_{j=n+1}^\infty \frac{(1-q^{2j})}{(1-q^j)}
\nonumber\\
&= \frac{1}{(1-q^{n+1})(1-q^{n+2}) \cdots (1-q^{2n}) (q^{2n+1};q^2)_\infty}
\nonumber\\
&= 1 + \sum_{\pi\in\mathcal{P}_n} q^{\abs{\pi}}.                                           
\end{align}
The result follows by considering the coefficient of $q^{\ell}$ on both 
sides of this identity.
\end{proof}
\end{theorem}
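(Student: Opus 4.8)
The plan is to prove the equivalent generating-function identity
\[
1 + \sum_{\pi\in\mathcal{D}_n} q^{\abs{\pi}} = 1 + \sum_{\pi\in\mathcal{P}_n} q^{\abs{\pi}},
\]
where the extra $1$ on each side records the empty partition, and then to read off equality of the coefficients of $q^\ell$ for every $\ell\ge1$. First I would compute the left-hand side. A partition in $\mathcal{D}_n$ is nothing more than a finite subset of $\{n+1,n+2,\dots\}$, each element included at most once, so
\[
1 + \sum_{\pi\in\mathcal{D}_n} q^{\abs{\pi}} = \prod_{j=n+1}^\infty \Parans{1+q^j}.
\]
The key algebraic step is the factorization $1+q^j = \frac{1-q^{2j}}{1-q^j}$, which converts the product into $\prod_{j=n+1}^\infty \frac{1-q^{2j}}{1-q^j}$.

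Next I would carry out the cancellation carefully, since this is exactly where the threshold $2n$ in the definition of $\mathcal{P}_n$ is produced. The numerator $\prod_{j\ge n+1}(1-q^{2j})$ is the product of the factors $(1-q^m)$ over even $m\ge 2n+2$, whereas the denominator runs over all $m\ge n+1$. The smallest factor killed is therefore $(1-q^{2n+2})$, so every factor $(1-q^m)$ with $n+1\le m\le 2n$ survives in the denominator, and among the factors with $m\ge 2n+1$ precisely the odd ones survive. This leaves
\[
\prod_{j=n+1}^\infty \frac{1-q^{2j}}{1-q^j} = \frac{1}{(1-q^{n+1})(1-q^{n+2})\cdots(1-q^{2n})\,\aqprod{q^{2n+1}}{q^2}{\infty}}.
\]
The one point demanding attention here is the boundary: that the cut occurs at $2n$ and not at $2n\pm1$, so that the surviving set of allowed parts is exactly $S=\{n+1,\dots,2n\}\cup\{\text{odd integers}\ge 2n+1\}$.

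Finally I would interpret this last expression combinatorially. The product $\prod_{j\in S}(1-q^j)^{-1}$ is the generating function for partitions with parts drawn from $S$ with arbitrary multiplicities; a partition has all of its parts in $S$ precisely when every part is $\ge n+1$ and every part exceeding $2n$ is odd, which is the defining condition of $\mathcal{P}_n$. Hence the derived product equals $1+\sum_{\pi\in\mathcal{P}_n} q^{\abs{\pi}}$, and comparing coefficients of $q^\ell$ finishes the proof. I expect no serious obstacle along this route; the only delicate bookkeeping is the cancellation at the threshold $2n$. A purely bijective proof extending Glaisher's map is also available, in which one repeatedly halves each even part that exceeds $2n$ until it either drops into the range $[n+1,2n]$ or becomes odd; there the subtlety would be to check that this halting rule yields a genuine bijection with a well-defined inverse, which is exactly the delicate point the generating-function argument sidesteps.
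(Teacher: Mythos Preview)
Your proof is correct and follows exactly the same route as the paper: write the generating function for $\mathcal{D}_n$ as $\prod_{j\ge n+1}(1+q^j)$, apply $1+q^j=(1-q^{2j})/(1-q^j)$, cancel to obtain $\frac{1}{(1-q^{n+1})\cdots(1-q^{2n})\aqprod{q^{2n+1}}{q^2}{\infty}}$, and interpret this as the generating function for $\mathcal{P}_n$. Your remark about the Glaisher-type bijection also anticipates what the paper does next, where exactly that map $\Psi_n$ is constructed and shown to be a bijection.
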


We require a bijective proof of this theorem. Glaisher \cite[p.23]{Pak}
has a well-known straightforward bijective proof of Euler's Theorem. We extend
this in a natural way to obtain a bijective proof of our theorem. 

\begin{bij}
Let $n\ge1$. There is a weight-preserving bijection
$$
\Psi_n \,:\, \mathcal{D}_n \longrightarrow \mathcal{P}_n,
$$
$$
\Psi_n(\pi) = \lambda,
$$ 
such that
\beq
k(\pi,n) = \mbox{$\#$ of parts of $\lambda\le 2n-1$}.
\mylabel{eq:Psinprop}
\eeq
\end{bij}

We define the weight-preserving bijection $\Psi_n$ as follows.
Let $\pi\in\mathcal{D}_n$. We describe the image of each part $m$ of $\pi$.
We note that $m\ge n+1$ and as before we write
$$
m = b(m) \, 2^{j(m)},
$$
where $b(m)$ is odd and $j(m)\ge 0$, we note $j(m)\ge j_0(m,n)$. We map
\beq
m \longmapsto \overbrace{2^{j_0(m,n)}b(m), 2^{j_0(m,n)}b(m), \dots, 2^{j_0(m,n)}b(m)}^{\mbox{$2^{j(m)-j_0(m,n)}$ times}},
\mylabel{eq:imm}
\eeq
which preserves the weight since
$$
m = \left(2^{j(m)-j_0(m,n)}\right) \left( 2^{j_0(m,n)}\,b(m)\right).
$$
We recall that $j_0(m,n)$ is the smallest nonnegative integer $j_0$
such that
$$
b(m) 2^{j_0} \ge n+1,
$$
and we see that each image part is $\ge n+1$. If an image part $2^{j_0(m,n)}b(m)$
is even then $j_0(m,n)\ge1$ and
$$
2^{j_0(m,n)}b(m) \le 2n,
$$
as noted before so that each even image part is $\le 2n$. Also any
odd image part $2^{j_0(m,n)}b(m)=b(m)\ge n+1$. This induces a well-defined
map
$$
\Psi_n \,:\, \mathcal{D}_n \longrightarrow \mathcal{P}_n.
$$
We show this map is onto. Let $\lambda$ be a partition in $\mathcal{P}_n$. Let
$p$ be a part of $\lambda$ and let $\mu_p$ denote its multiplicity.
Then we write
$$
p = b(p) 2^{j_0(p,n)} \ge n+1,
$$
we note $j(p)=j_0(p,n)$ since $p$ is a part of $\lambda$ and $\lambda\in\mathcal{P}_n$. 
Now we write $\mu_p$ in
binary
$$
\mu_p = \sum_a 2^{\mu_p(a)}. 
$$
This part $p$ with multiplicity $\mu_p$
arises from a partition in $\mathcal{D}_n$ with parts $b(p) 2^{j_0(p,n) +\mu_p(a)}$
under the action of $\Psi_n$. We see that $\Psi_n$ is onto and 
Theorem \thm{ExtendEuler}
implies that it is a weight-preserving bijection.

Next we prove \eqn{Psinprop}. We let
$$
\ktwid = \mbox{$\#$ of parts $p$ of $\lambda$ where $p\le 2n-1$}.
$$
We note that if $m$ is a part of $\pi$ then as before
$$
m = b(m) 2^{j(m)} \ge n+1,
$$
and $j(m)\ge j_0(m,n)$. Under the map $\Psi_n$ the image of
$m$ is given by \eqn{imm}. This contributes $2^{j(m)-j_0(m,n)}$ to $\ktwid$
provided $b(m) 2^{j_0(m,n)} < 2n$, and \eqn{Psinprop} follows.

\begin{example}[$n=3$]
We illustrate the bijection $\Psi_n$ when $n=3$. There are $6$ partitions of 
$16$ in $\mathcal{D}_3$, the set of partitions into distinct parts $\ge 4$:
$$
\begin{array}{llll}
7+5+4  & \rightarrow\qquad  7\cdot2^0, 5\cdot2^0, 1\cdot2^2 
       &\rightarrow\qquad  7\cdot2^0, 5\cdot2^0, 1\cdot2^2  &\rightarrow 7+5+4 \\
9+7    & \rightarrow\qquad  9\cdot2^0, 7\cdot2^0 
       &\rightarrow\qquad  9\cdot2^0, 7\cdot2^0  &\rightarrow 9+7 \\
10+6    & \rightarrow\qquad  5\cdot2^1, 3\cdot2^1 
       &\rightarrow\qquad  5\cdot2^0, 5\cdot2^0, 3\cdot2^1  &\rightarrow 6+5+5 \\
11+5    & \rightarrow\qquad  11\cdot2^0, 5\cdot2^0 
       &\rightarrow\qquad   11\cdot2^0, 5\cdot2^0 &\rightarrow 11+5 \\
12+4    & \rightarrow\qquad  3\cdot2^2, 1\cdot2^2 
       &\rightarrow\qquad   3\cdot2^1, 3\cdot2^1, 1\cdot2^2 &\rightarrow 6+6+4 \\
16    & \rightarrow\qquad  1\cdot2^4 
       &\rightarrow\qquad   1\cdot2^2, 1\cdot2^2, 1\cdot2^2, 1\cdot2^2 &\rightarrow 
      4+4+4+4 
\end{array}
$$
Each partition has been mapped into $\mathcal{P}_3$, the set of partitions
with smallest part $\ge4$ and all parts $> 6$ are odd.
\end{example}

We are now ready to construct our  weight-preserving bijection 
$\Phi\,:\, \mathcal{M} \longrightarrow \SPB$.  Suppose $(\pi,j)$ is a
marked overpartition with $1\le j \le \nu(\pi)$.
As described before 
we let 
$\pi_1$ be the partition formed by the non-overlined parts of $\pi$, 
$\pi_2$ be the partition (into distinct parts) formed by the overlined parts 
of $\pi$, so that
$$
s(\pi_2) > s(\pi_1) = s(\pi) = n.
$$
We let
\begin{align*}
\pi_1 &= (\overbrace{n,n,\dots,n}^\nu,n_2,n_3,\dots,n_a), \\
\pi_2 &= (m_1, m_2, \dots, m_b),                 
\end{align*}
where
\begin{align*}
 &n<n_2\le n_3\le \cdots \le n_a, \\
 &n<m_1< m_2< \cdots < m_b.     
\end{align*}
Define
\beq
\Phi(\pi,j) = \vec{\lambda} = (\lambda_1,\lambda_2),
\eeq
where
\beq
\begin{array}{ll}
\lambda_1 &= (\overbrace{n,n,\dots,n}^j,n_2,n_3,\dots,n_a), \\
\lambda_2 &= (\overbrace{n,n,\dots,n}^{\nu-j},\Psi_n(\pi_2)).
\end{array}  
\mylabel{eq:lam12def}
\eeq
The map $\Phi$ is clearly weight-preserving. We see that $s(\lambda_1)=n$
and $\lambda_1\in\mathcal{P}$,
In addition, $\Psi_n(\pi_2)$ is a partition into parts $\ge n+1$ with
all parts $\ge 2n+1$ being odd so that $\lambda_2\in\SPB$ and the map
$\Phi$ is well-defined. By \eqn{sptbid} and \eqn{sptbeq} we need only show
that $\Phi$ is onto. 

Let
$$
\vec{\lambda} = (\lambda_1,\lambda_2) \in \SPB.
$$
Let $n=s(\lambda_1)$ so that $\lambda_1$, $\lambda_2\in\mathcal{P}$,
$s(\lambda_2)\ge s(\lambda_1)=n$ and all parts of $\lambda_2$ $\ge 2n+1$
are odd. Let $j=\nu(\lambda_1)$, and let $\ell$ denote the number of
parts of $\lambda_2$ that are equal to $n$, so that $j\ge1$ and $\ell\ge0$.
Remove any parts of $\lambda_2$ equal to $n$ to form the partition
$\lambdatwid_2$ and add the parts removed from $\lambda_2$ to $\lambda_1$
to form the partition $\pi_1$. Now let $\pi_2 = \Psi_n^{-1}(\lambdatwid_2)$
so that $\pi_2$ is a partition into distinct parts $\ge n+1$. Form the partition
$\pi$ by overlining the parts of $\pi_2$ and adding them to $\pi_1$.
We see that $(\pi,j)\in\mathcal{M}$, $1 \le j \le \nu(\pi)=j+\ell$ and
\beq
\Phi(\pi,j) = \vec{\lambda} = (\lambda_1,\lambda_2).
\eeq
The map $\Phi$ is onto and hence a bijection.

    Now we prove \eqn{kid}, \eqn{sptccid}. As before we let
$$
\Phi(\pi,j) = \vec{\lambda} = (\lambda_1,\lambda_2),
$$
where $\lambda_1$, $\lambda_2$ are given in \eqn{lam12def}, so that
$s(\lambda_1)=n$, and $1 \le j \le \nu(\pi)=\nu(\pi_1)$. Then
\begin{align*}
k(\vec{\lambda}) &= \nu - j + 
(\mbox{$\#$ of parts of $\Psi_n(\pi_2) \le 2n-1$}) \\
& = \nu(\pi_1) - j + k(\pi_2,n) \qquad\mbox{(by \eqn{Psinprop})}\\
& = \nu(\pi_1) - j + k(\pi_2,s(\pi_1)) \\
& = \kb(\pi,j),
\end{align*}
which proves \eqn{kid}.
Finally, from \eqn{crankbdef} we have
\beq
\crankb(\vec{\lambda}) = 
\begin{cases}
\mbox{($\#$ of parts of $\pi_1\ge s(\pi_1)+\kb) - \kb$} & \mbox{if $\kb>0$}\\
\mbox{($\#$ of parts of $\pi_1) - 1$} & \mbox{if $\kb=0$,}
\end{cases}
\eeq
where $\kb=\kb(\pi,j)$, since $\kb(\pi,j)=k(\vec{\lambda})$ and if
$\kb=\kb(\pi,j)=0$, then $\nu(\pi_1)=j$ and $k(\pi_2,s(\pi_1))=0$
in which case, the number of parts of $\pi_1$ equals the
number of parts of $\lambda_1$. Hence we have
$$
\crankb(\vec{\lambda}) = \sptcrank(\pi,j),
$$
which is \eqn{sptccid}. This completes the proof of our main result.


\section{Proofs of Theorems \ref{SptBarRankCrank},
\ref{MSptRankCrank},
\ref{SptBar2RankCrank}, \ref{SptBar1RankCrank}
}

These four proofs all follow the same method. The generating function
for the rank series is rewritten using Watson's transformation
and then the two variable series matches the difference of a rank 
and crank by Bailey's Lemma. 

We recall a pair of sequences of functions, $(\alpha_n,\beta_n)$,
forms a Bailey pair for $(a,q)$ if
\begin{align}
	\beta_n &= \sum_{r=0}^n \frac{\alpha_r}
		{\aqprod{q}{q}{n-r}\aqprod{aq}{q}{n+r}}
.
\end{align}
The limiting case of Bailey's Lemma gives for a Bailey pair
$(\alpha_n,\beta_n)$ that
\begin{align}
	\sum_{n=0}^\infty
	\aqprod{\rho_1,\rho_2}{q}{n} \Parans{\frac{aq}{\rho_1\rho_2}}^n  \beta_n
	&=
	\frac{\aqprod{aq/\rho_1,aq/\rho_2}{q}{\infty}}
		{\aqprod{aq,aq/\rho_1\rho_2}{q}{\infty}}
	\sum_{n=0}^\infty
	\frac{\aqprod{\rho_1,\rho_2}{q}{n} (\frac{aq}{\rho_1\rho_2})^n \alpha_n}
		{\aqprod{aq/\rho_1,aq/\rho_2}{q}{n}}
.
\end{align}

\begin{proof}[Proof of Theorem \ref{SptBarRankCrank}]
We use the Bailey pair E(1) of \cite[page 469]{Slater}
for $(1,q)$ given by
\begin{align*}
	\alpha_n &= \PieceTwo{1}{(-1)^n2q^{n^2}}{n=0}{n\ge 1}
	\\
	\beta_n &= \frac{1}{\aqprod{q^2}{q^2}{n}}. 
\end{align*}
Then by Bailey's Lemma we have that
\begin{align*}
	\sum_{n=0}^\infty \frac{\aqprod{z,z^{-1}}{q}{n}q^n}
		{\aqprod{-q,q}{q}{n}}
	&= \frac{\aqprod{zq,z^{-1}q}{q}{\infty}}{\aqprod{q,q}{q}{\infty}}
		\Parans{1+
			\sum_{n=1}^\infty \frac{\aqprod{z,z^{-1}}{q}{n}(-1)^n2q^{n^2+n}}
				{\aqprod{zq,z^{-1}q}{q}{n}}
		}
	\\
	&= \frac{\aqprod{zq,z^{-1}q}{q}{\infty}}{\aqprod{q,q}{q}{\infty}}
		\Parans{1+
			2\sum_{n=1}^\infty \frac{(1-z)(1-z^{-1})(-1)^nq^{n^2+n}}
				{(1-zq^n)(1-z^{-1}q^n)}
		}	
.
\end{align*}

But then
\begin{align*}
	\SB(z,q)
	 =& \sum_{n=1}^\infty \frac{q^n \aqprod{-q^{n+1},q^{n+1}}{q}{\infty}}
		{\aqprod{zq^n,z^{-1}q^n}{q}{\infty}} 
	\\
	=& \frac{\aqprod{-q,q}{q}{\infty}}{\aqprod{z,z^{-1}}{q}{\infty}}
		\cdot \sum_{n=0}^\infty \frac{\aqprod{z,z^{-1}}{q}{n}q^n}
				{\aqprod{-q,q}{q}{n}}
		- \frac{\aqprod{-q,q}{q}{\infty}}{\aqprod{z,z^{-1}}{q}{\infty}}
	\\
	=&
		\frac{\aqprod{-q,q,zq,z^{-1}q}{q}{\infty}}{\aqprod{z,z^{-1},q,q}{q}{\infty}}
			\Parans{1+
				2\sum_{n=1}^\infty \frac{(1-z)(1-z^{-1})(-1)^nq^{n^2+n}}
					{(1-zq^n)(1-z^{-1}q^n)}
			}	 
		- \frac{\aqprod{-q,q}{q}{\infty}}{\aqprod{z,z^{-1}}{q}{\infty}}
	\\
	=&
		\frac{\aqprod{-q}{q}{\infty}}{(1-z)(1-z^{-1})\aqprod{q}{q}{\infty}}
			\Parans{1+
				2\sum_{n=1}^\infty \frac{(1-z)(1-z^{-1})(-1)^nq^{n^2+n}}
					{(1-zq^n)(1-z^{-1}q^n)}
			}	 
		- \frac{\aqprod{q^2}{q^2}{\infty}}{\aqprod{z,z^{-1}}{q}{\infty}}
	\\
	=& \frac{1}{(1-z)(1-z^{-1})}\Parans{
			\sum_{n=0}^\infty\sum_{m=-\infty}^\infty \overline{N}(m,n)z^mq^n
			- \sum_{n=0}^\infty\sum_{m=-\infty}^\infty \overline{M}(m,n)z^mq^n
		}
.
\end{align*}
This proves the theorem.
\end{proof}

\begin{proof}[Proof of Theorem \ref{MSptRankCrank}]
Before using a Bailey pair, we will apply a limiting case of Watson's
transformation to the generating function of $N2(m,n)$. We recall Watson's 
transformation gives
\begin{align}
	&\sum_{n=0}^\infty \frac{\aqprod{aq/bc,d,e}{q}{n}(\frac{aq}{de})^n}
			{\aqprod{q,aq/b,aq/c}{q}{n}}
	\\	
	=&
	\frac{\aqprod{aq/d,aq/e}{q}{\infty}}{\aqprod{aq,aq/de}{q}{\infty}}
	\sum_{n=0}^\infty 
	\frac{\aqprod{a,\sqrt{a}q,-\sqrt{a}q,b,c,d,e}{q}{n}(aq)^{2n}(-1)^nq^{n(n-1)/2}}
		{\aqprod{q,\sqrt{a},-\sqrt{a},aq/b,aq/c,aq/d,aq/e}{q}{n}(bcde)^n}
.
\end{align}
Applying this with $q\mapsto q^2,a=1,b=z,c=z^{-1},d=-q$ and  
$e\rightarrow\infty$ we get the following.
\begin{align}
	& \sum_{n=0}^\infty q^{n^2}
		\frac{\aqprod{-q}{q^2}{n}}{\aqprod{zq^2}{q^2}{n}\aqprod{z^{-1}q^2}{q^2}{n}}
	\\
	&=	\lim_{e\rightarrow\infty}
		\sum_{n=0}^\infty \frac{\aqprod{q^2,-q,e}{q^2}{n}(-1)^ne^{-n}q^{n} }
			{\aqprod{q^2,z^{-1}q^2,zq^2}{q^2}{n}}
	\\
	&= \frac{\aqprod{-q}{q^2}{\infty}}{\aqprod{q^2}{q^2}{\infty}}
		\Parans{1+
			\lim_{a\rightarrow 1, e\rightarrow\infty}
			\sum_{n=1}^\infty
			\frac{(1-a)\aqprod{-q^2,z,z^{-1},e}{q^2}{n} q^{n^2+2n}}			
				{(1-\sqrt{a})\aqprod{-1,z^{-1}q^2,zq^2}{q^2}{n} e^n}	
			}
	\\
	&=
		\frac{\aqprod{-q}{q^2}{\infty}}{\aqprod{q^2}{q^2}{\infty}}
		\Parans{1+
			\sum_{n=1}^\infty
			\frac{(1+q^{2n})(1-z)(1-z^{-1})(-1)^nq^{2n^2+n}}			
				{(1-zq^{2n})(1-z^{-1}q^{2n})}	
			}
.
\end{align}

Using one of the unlabeled Bailey pairs in \cite[page 468]{Slater} 
we have, after replacing $q$ by $q^2$, a Bailey pair for $(1,q^2)$ given by
\begin{align*}
	\alpha_n &= \PieceTwo{1}{(-1)^nq^{2n^2}(q^n+q^{-n})}{n=0}{n\ge 1}
	\\
	\beta_n &= \frac{1}{\aqprod{-q,q^2}{q^2}{n}}. 
\end{align*}
Then by Bailey's Lemma we have that
\begin{align*}
	&\sum_{n=0}^\infty \frac{\aqprod{z,z^{-1}}{q^2}{n}q^{2n}}
		{\aqprod{-q,q^2}{q^2}{n}}
	\\
	&= 
		\frac{\aqprod{zq^2,z^{-1}q^2}{q^2}{\infty}}
			{\aqprod{q^2,q^2}{q^2}{\infty}}
		\Parans{1+
			\sum_{n=1}^\infty \frac{\aqprod{z,z^{-1}}{q^2}{n}(-1)^nq^{2n^2+2n}(q^n+q^{-n})}
				{\aqprod{zq^2,z^{-1}q^2}{q^2}{n}}
		}
	\\
	&= 
		\frac{\aqprod{zq^2,z^{-1}q^2}{q^2}{\infty}}
			{\aqprod{q^2,q^2}{q^2}{\infty}}
		\Parans{1+
			\sum_{n=1}^\infty \frac{(1-z)(1-z^{-1})(-1)^nq^{2n^2+2n}(q^n+q^{-n})}
				{(1-zq^{2n})(1-z^{-1}q^{2n})}
		}	
.
\end{align*}

But then
\begin{align*}
	&\STwoB(z,q)
	\\
	 =& 
		\sum_{n=1}^\infty 
		\frac{q^{2n}\aqprod{q^{2n+2},-q^{2n+1}}{q^2}{\infty}}
			{\aqprod{zq^{2n},z^{-1}q^{2n}}{q^2}{\infty}} 
	\\
	=& 
		\frac{\aqprod{-q,q^2}{q^2}{\infty}}
			{\aqprod{z,z^{-1}}{q^2}{\infty}}
		\cdot 
		\sum_{n=0}^\infty \frac{\aqprod{z,z^{-1}}{q^2}{n}q^{2n}}
			{\aqprod{-q,q^2}{q^2}{n}}
		- \frac{\aqprod{-q,q^2}{q^2}{\infty}}
				{\aqprod{z,z^{-1}}{q^2}{\infty}}
	\\
	=&
		\frac{\aqprod{-q,q^2,zq^2,z^{-1}q^2}{q^2}{\infty}}
			{\aqprod{z,z^{-1},q^2,q^2}{q^2}{\infty}}
		\cdot\Parans{1+
					\sum_{n=1}^\infty \frac{(1-z)(1-z^{-1})(-1)^nq^{2n^2+2n}(q^n+q^{-n})}
						{(1-zq^{2n})(1-z^{-1}q^{2n})}
				}
		- \frac{\aqprod{-q,q^2}{q^2}{\infty}}
				{\aqprod{z,z^{-1}}{q^2}{\infty}}
	\\
	=& \frac{\aqprod{-q}{q^2}{\infty}}{(1-z)(1-z^{-1})\aqprod{q^2}{q^2}{\infty}}
		\Parans{1+
				\sum_{n=1}^\infty \frac{(1-z)(1-z^{-1})(-1)^nq^{2n^2+n}(q^{2n}+1)}
					{(1-zq^{2n})(1-z^{-1}q^{2n})}
		}	 
		\\
		&-\frac{\aqprod{-q,q^2}{q^2}{\infty}}
			{(1-z)(1-z^{-1})\aqprod{zq^2,z^{-1}q^2}{q^2}{\infty}}
	\\
	=& \frac{1}{(1-z)(1-z^{-1})}\Parans{
			\sum_{n=0}^\infty\sum_{m=-\infty}^\infty N2(m,n)z^mq^n
			- \sum_{n=0}^\infty\sum_{m=-\infty}^\infty M2(m,n)z^mq^n
		}
.
\end{align*}
This proves the theorem.

\end{proof}

\begin{proof}[Proof of Theorem \ref{SptBar2RankCrank} ]
We have
\begin{align}
	&\SB_2(z,q)
	\\
	=&
	\sum_{n=1}^\infty
		\frac{q^{2n}\aqprod{-q^{2n+1},q^{2n+1}}{q}{\infty}}
			{\aqprod{zq^{2n},z^{-1}q^{2n}}{q}{\infty}}
	\\
	=&
	\frac{\aqprod{-q,q}{q}{\infty}}{\aqprod{z,z^{-1}}{q}{\infty}}
	\sum_{n=1}^\infty \frac{q^{2n}\aqprod{z,z^{-1}}{q}{2n}}
		{\aqprod{-q,q}{q}{2n}}
	\\
	=&
		\frac{\aqprod{-q,q}{q}{\infty}}{\aqprod{z,z^{-1}}{q}{\infty}}
		\sum_{n=0}^\infty \frac{q^{2n}\aqprod{z,z^{-1}}{q}{2n}}
			{\aqprod{-q,q}{q}{2n}}
		 -\frac{\aqprod{-q,q}{q}{\infty}}{\aqprod{z,z^{-1}}{q}{\infty}} 
.
\end{align}

Using the Bailey pair in proof of Theorem \ref{SptBarRankCrank} 
along with the Bailey pair for $(1,q)$
\begin{align}
	\alpha_n &= \PieceTwo{1}{2(-1)^n}{n=0}{n\ge 1}
	\\
	\beta_n &= \frac{(-1)^n}{\aqprod{-q,q}{q}{n}}
\end{align}
of \cite[page 468]{Slater}, we have the Bailey pair
\begin{align*}
	\alpha_n &= \PieceTwo{1}{(-1)^n(1+q^{n^2})}{n=0}{n\ge 1}
	\\
	\beta_n &= \frac{1}{2\aqprod{q^2}{q^2}{n}}
		+ \frac{(-1)^n}{2\aqprod{q^2}{q^2}{n}}
	\\
	&= \PieceTwo{\frac{1}{\aqprod{q^2}{q^2}{n}}}{0}{n\equiv 0 \pmod{2}}{n\equiv 1 \pmod{2}}
.
\end{align*}
Thus
\begin{align*}
	&\sum_{n=0}^\infty \frac{q^{2n}\aqprod{z,z^{-1}}{q}{2n}}
		{\aqprod{-q,q}{q}{2n}}
	\\	
	=&
 	\sum_{n=0}^\infty \aqprod{z,z^{-1}}{q}{n} q^n \beta_n
	\\
	=&
		\frac{\aqprod{zq,z^{-1}q}{q}{\infty}}{\aqprod{q,q}{q}{\infty}}
 		\sum_{n=0}^\infty \frac{\aqprod{z,z^{-1}}{q}{n} q^n \alpha_n}
			{\aqprod{zq,z^{-1}q}{q}{n}}
	\\
	=&
	\frac{\aqprod{zq,z^{-1}q}{q}{\infty}}{\aqprod{q,q}{q}{\infty}}
	\Parans{1+
		\sum_{n=1}^\infty \frac{(1-z)(1-z^{-1})(-1)^nq^n(1+q^{n^2})}
			{(1-zq^n)(1-z^{-1}q^n)}
	}
.
\end{align*}

And so 
\begin{align}
	&\SB_2(z,q)
	\\
	=& 
		\frac{\aqprod{-q}{q}{\infty}}{(1-z)(1-z^{-1})\aqprod{q}{q}{\infty}}
		\Parans{1+
		\sum_{n=1}^\infty \frac{(1-z)(1-z^{-1})(-1)^nq^n(1+q^{n^2})}
			{(1-zq^n)(1-z^{-1}q^n)}
		}
	\nonumber\\
	&-\frac{\aqprod{q^2}{q^2}{\infty}}{(1-z)(1-z^{-1})\aqprod{zq,z^{-1}q}{q}{\infty}}
.
\end{align}
This proves the theorem.

\end{proof}

\begin{proof}[Proof of Theorem \ref{SptBar1RankCrank}]
With $\SB(z,q)$and $\SB_2(z,q)$ known, we also know $\SB_1(z,q)$. 
However we can also derive the result from a Bailey pair as we have for the other series.

We have
\begin{align*}
	\SB_1(z,q)
	&= \sum_{n=0}^\infty \frac{q^{2n+1}\aqprod{-q^{2n+2},q^{2n+2}}{q}{\infty}}
			{\aqprod{zq^{2n+1},z^{-1}q^{2n+1}}{q}{\infty}}
	\\
	&=
		\frac{\aqprod{-q,q}{q}{\infty}}{\aqprod{z,z^{-1}}{q}{\infty}}
		\sum_{n=0}^\infty \frac{\aqprod{z,z^{-1}}{q}{2n+1}q^{2n+1}}{\aqprod{-q,q}{q}{2n+1}}.	
\end{align*}

By combining Bailey pairs as we did for $\SB_2(z,q)$, we have a Bailey pair for $(1,q)$ given by
\begin{align*}
	\alpha_n &= \PieceTwo{0}{(-1)^n(q^{n^2}-1)}{n=0}{n\ge 1}
	\\
	\beta_n &= \frac{1}{2\aqprod{q^2}{q^2}{n}}
		- \frac{(-1)^n}{2\aqprod{q^2}{q^2}{n}}
	\\
	&= \PieceTwo{0}{\frac{1}{\aqprod{q^2}{q^2}{n}}}{n\equiv 0 \pmod{2}}{n\equiv 1 \pmod{2}}.
\end{align*}
By Bailey's Lemma we have then
\begin{align*}
	&\sum_{n=0}^\infty \frac{\aqprod{z,z^{-1}}{q}{2n+1}q^{2n+1}}{\aqprod{-q,q}{q}{2n+1}}
	\\
	&= \frac{\aqprod{zq,z^{-1}}{q}{\infty}}{\aqprod{q,q}{q}{\infty}}	
		\sum_{n=0}^\infty\frac{\aqprod{z,z^{-1}}{q}{n}q^n\alpha_n}{\aqprod{zq,z^{-1}q}{q}{n}}
	\\
	&= \frac{\aqprod{zq,z^{-1}}{q}{\infty}}{\aqprod{q,q}{q}{\infty}}	
		\sum_{n=1}^\infty\frac{(1-z)(1-z^{-1})q^n(-1)^n(q^{n^2}-1)}
			{(1-zq^n)(1-z^{-1}q^n)}.
\end{align*}

This gives
\begin{align}\label{SeriesForS1Bar}
	\SB_1(z,q)
	&= \frac{\aqprod{-q}{q}{\infty}}{(1-z)(1-z^{-1})\aqprod{q}{q}{\infty}}
		\sum_{n=1}^\infty \frac{(1-z)(1-z^{-1})q^n(-1)^n(q^{n^2}-1)}
			{(1-zq^n)(1-z^{-1}q^n)}
\end{align}
and completes the proof.
\end{proof}

As pointed out by the referee, it is also possible to deduce these identities
from Watson's transformation, rather than from Bailey pairs and Bailey's
Lemma.

\section{Dissections}

\begin{proof}[Proofs of Theorems \ref{ThreeDissectionForDysonRank} and \ref{ThreeDissectionForM2Rank} ]
We are to show
\begin{align}
	\label{ProofOf3ForRankEq1}
	\sum_{n=0}^\infty\sum_{r=0}^2 \overline{N}(r,3,3n)\zeta_3^r q^{n}
	&=
		\frac{\aqprod{q^3}{q^3}{\infty}^4\aqprod{q^2}{q^2}{\infty}}
			{\aqprod{q}{q}{\infty}^2\aqprod{q^6}{q^6}{\infty}^2}
	,
	\\
	\label{ProofOf3ForRankEq2}
	\sum_{n=0}^\infty\sum_{r=0}^2 \overline{N}(r,3,3n+1)\zeta_3^r q^{n}
	&=
		2\frac{\aqprod{q^3}{q^3}{\infty}\aqprod{q^6}{q^6}{\infty}}
			{\aqprod{q}{q}{\infty}}
	,
	\\
	\label{ProofOf3ForM2RankEq1}
	\sum_{n=0}^\infty\sum_{r=0}^2 N2(r,3,3n+1)\zeta_3^r q^{n}
	&=
		\frac{\aqprod{q^6}{q^6}{\infty}^4}
			{\aqprod{q^2}{q^2}{\infty}\aqprod{q^3}{q^3}{\infty}\aqprod{q^{12}}{q^{12}}{\infty}}
.
\end{align}

For (\ref{ProofOf3ForRankEq1}) we have 
\begin{align}\label{prop2Eq1}
	&\sum_{n=0}^\infty
		\Parans{\overline{N}(0,3,3n) + \overline{N}(1,3,3n)\zeta_3
			+\overline{N}(2,3,3n)\zeta_3^2  
		}q^{n}
	\nonumber\\
	&= 
	\sum_{n=0}^\infty
		\Parans{\overline{N}(0,3,3n) - \overline{N}(1,3,3n)  
		}q^{n}
	\nonumber\\
	&= 
	\frac{\aqprod{q^3}{q^3}{\infty}^2\aqprod{-q}{q}{\infty}}
			{\aqprod{q}{q}{\infty}\aqprod{-q^3}{q^3}{\infty}^2}	
	\nonumber\\
	&= 
	\frac{\aqprod{q^3}{q^3}{\infty}^4\aqprod{q^2}{q^2}{\infty}}
			{\aqprod{q}{q}{\infty}^2\aqprod{q^6}{q^6}{\infty}^2}
\end{align}
The penultimate equality in (\ref{prop2Eq1}) is the first part of Theorem 1.1 of \cite{LO1}, although we've omitted their $-1$ term. The $-1$ is due to how one interprets the empty overpartition and its rank. We use the convention that the empty overpartition has rank 0 and don't adjust the $q^0$ term of the generating function.

Equations (\ref{ProofOf3ForRankEq2}) and (\ref{ProofOf3ForM2RankEq1}) are also just restatements of results 
in \cite{LO1} and \cite{LO2}, respectively.
\end{proof}

\begin{proof}[Proofs of Theorems \ref{FiveDissectionForDysonRank} and \ref{FiveDissectionForM2Rank}]
We see we are to prove
\begin{align}
	\label{ProofFiveDissectionForRankEq1}
	\sum_{n=0}^\infty\sum_{k=0}^4\Parans{\overline{N}(k,5,5n)\zeta_5^k}q^n
	&=
		\frac{\aqprod{q^4,q^6}{q^{10}}{\infty}\aqprod{q^5}{q^5}{\infty}^2}
			{\aqprod{q^2,q^3}{q^5}{\infty}^2\aqprod{q^{10}}{q^{10}}{\infty}}
		+
		2(\zeta_5+\zeta_5^{-1})q
			\frac{\aqprod{q^{10}}{q^{10}}{\infty}}
			{\aqprod{q^3,q^4,q^6,q^7}{q^{10}}{\infty}}	
	,
	\\
	\label{ProofFiveDissectionForRankEq2}
	\sum_{n=0}^\infty\sum_{k=0}^4\Parans{\overline{N}(k,5,5n+3)\zeta_5^k}q^n
	&= 
		\frac{2(1-\zeta_5-\zeta_5^{-1})\aqprod{q^{10}}{q^{10}}{\infty}}
			{\aqprod{q^2,q^3}{q^{5}}{\infty}}
	,
	\\
	\label{ProofFiveDissectionForM2RankEq1}
	\sum_{n=0}^\infty\sum_{k=0}^4\Parans{N2(k,5,5n+1)\zeta_5^k}q^n
	&= 
		\frac{\aqprod{-q^5,q^{10}}{q^{10}}{\infty}}
			{\aqprod{q^2,q^8}{q^{10}}{\infty}}
	,
	\\
	\label{ProofFiveDissectionForM2RankEq2}
	\sum_{n=0}^\infty\sum_{k=0}^4\Parans{N2(k,5,5n+3)\zeta_5^k}q^n
	&= 
		(\zeta_5+\zeta_5^4)
		\frac{\aqprod{-q^5,q^{10}}{q^{10}}{\infty}}
			{\aqprod{q^4,q^6}{q^{10}}{\infty}}
.
\end{align}

But we see that
\begin{align}
	& \overline{N}(0,5,5n) 
		+ \overline{N}(1,5,5n)\zeta_5
		+ \overline{N}(2,5,5n)\zeta_5^2
		+ \overline{N}(3,5,5n)\zeta_5^3 
		+ \overline{N}(4,5,5n)\zeta_5^4
	\\
	&=
		\overline{N}(0,5,5n) 
		+ \overline{N}(1,5,5n)(\zeta_5+\zeta_5^4)
		+ \overline{N}(2,5,5n)(\zeta_5^2+\zeta_5^3)
	\\
	&= 
		\overline{N}(0,5,5n) 
		- \overline{N}(2,5,5n) 
		+ (\zeta_5+\zeta_5^4)(\overline{N}(1,5,5n)-\overline{N}(2,5,5n)). 
\end{align}
By the difference formulas in \cite{LO1} we have then
\begin{align}
	\sum_{n=0}^\infty\sum_{k=0}^4\Parans{\overline{N}(k,5,5n)\zeta_5^k}q^n
	&= 
		\frac{\aqprod{-q^2,-q^3}{q^5}{\infty}\aqprod{q^5}{q^5}{\infty}}
			{\aqprod{q^2,q^3}{q^{5}}{\infty}\aqprod{-q^5}{q^5}{\infty}}
		+ \frac{2(\zeta_5+\zeta_5^4)q\aqprod{q^{10}}{q^{10}}{\infty}}
			{\aqprod{q^3,q^4,q^6,q^7}{q^{10}}{\infty}}
	\nonumber\\
	&=
		\frac{\aqprod{q^4,q^6}{q^{10}}{\infty}\aqprod{q^5}{q^5}{\infty}^2}
			{\aqprod{q^2,q^3}{q^5}{\infty}^2\aqprod{q^{10}}{q^{10}}{\infty}}
		+
		2(\zeta_5+\zeta_5^{-1})q
			\frac{\aqprod{q^{10}}{q^{10}}{\infty}}
			{\aqprod{q^3,q^4,q^6,q^7}{q^{10}}{\infty}}	
. 
\end{align}

Equations (\ref{ProofFiveDissectionForRankEq2}), 
(\ref{ProofFiveDissectionForM2RankEq1}), and 
(\ref{ProofFiveDissectionForM2RankEq2}) are also just restatements of the results
in \cite{LO1} and \cite{LO2}.
\end{proof}

\begin{proof}[Proof of Theorem \ref{ThreeDissectionOverpartionCrank} ]
By definition we have
\begin{align} 
	\sum_{n=0}^\infty\sum_{m=-\infty}^\infty \overline{M}(m,n)\zeta_3^m q^{n}	
	&= \frac{\aqprod{q^2}{q^2}{\infty}}
			{\aqprod{\zeta_3q}{q}{\infty}\aqprod{\zeta_3^{-1}q}{q}{\infty}}
	\nonumber\\
	&= 
	\frac{\aqprod{q^2}{q^2}{\infty}\aqprod{q}{q}{\infty}}
			{\aqprod{q^3}{q^3}{\infty}}.
\end{align}
We see we are to show
\begin{align}\label{crankDissectionEq1}
	\frac{\aqprod{q^2}{q^2}{\infty}\aqprod{q}{q}{\infty}}
			{\aqprod{q^3}{q^3}{\infty}}
	&= 
		\frac{\aqprod{q^9}{q^9}{\infty}^4\aqprod{q^6}{q^6}{\infty}}
			{\aqprod{q^3}{q^3}{\infty}^2\aqprod{q^{18}}{q^{18}}{\infty}^2}
		-q\frac{\aqprod{q^{18}}{q^{18}}{\infty}\aqprod{q^9}{q^9}{\infty}}{\aqprod{q^3}{q^3}{\infty}}
		-2q^2\frac{\aqprod{q^{18}}{q^{18}}{\infty}^4}{\aqprod{q^9}{q^9}{\infty}^2\aqprod{q^6}{q^6}{\infty}}
.
\end{align}

Replacing $q$ by $q^{1/3}$ and multiplying by 
$\frac{\aqprod{q}{q}{\infty}}{\aqprod{q^3}{q^3}{\infty}\aqprod{q^6}{q^6}{\infty}}$, 
the proposition is equivalent to 
\begin{align}
	\label{prop3Eq1}
	\frac{\aqprod{q^{1/3}}{q^{1/3}}{\infty}\aqprod{q^{2/3}}{q^{2/3}}{\infty}}
		{\aqprod{q^3}{q^3}{\infty}\aqprod{q^6}{q^6}{\infty}}
	&=
		\frac{\aqprod{q^3}{q^3}{\infty}^3\aqprod{q^2}{q^2}{\infty}}
			{\aqprod{q}{q}{\infty}\aqprod{q^6}{q^6}{\infty}^3}
		-q^{1/3}
		-2q^{2/3}\frac{\aqprod{q}{q}{\infty}\aqprod{q^6}{q^6}{\infty}^3}
			{\aqprod{q^3}{q^3}{\infty}^3\aqprod{q^2}{q^2}{\infty}}
.
\end{align}

If we let $v$ be the infinite continued fraction
\begin{align}
	v = \cfrac{q^{1/3}}
			{
				1+\cfrac{q+q^2}
					{
						1+\cfrac{q^2+q^4}
							{
								1+\cfrac{q^3+q^6}
								{1+\dots}
							}
					}	 
			}
\end{align}
then by Entry 3.3.1(a) of Ramanujan's Lost notebook part I \cite{AB} we have
\begin{align}
	v &= q^{1/3}\frac{\aqprod{q}{q^2}{\infty}}{\aqprod{q^3}{q^6}{\infty}^3}.
\end{align}

Thus with $x(q)=q^{-1/3}v$ we have
\begin{align}
	x(q) &= \frac{\aqprod{q}{q^2}{\infty}}{\aqprod{q^3}{q^6}{\infty}^3}
	\\
	&= \frac{\aqprod{q}{q}{\infty}\aqprod{q^6}{q^6}{\infty}}{\aqprod{q^3}{q^3}{\infty}^3\aqprod{q^2}{q^2}{\infty}}.
\end{align}

But now (\ref{prop3Eq1}) is exactly Theorem 2 of \cite{Chan}.
\end{proof}

\begin{proof}[Proof of Theorem \ref{FiveDissectionOverpartionCrank} ]
We have
\begin{align}
	\sum_{n=0}^\infty\sum_{m=-\infty}^\infty \overline{M}(m,n)\zeta_5^mq^n
	&=
		\frac{\aqprod{q^2}{q^2}{\infty}}
			{\aqprod{\zeta_5q,\zeta_5^{-1}q}{q}{\infty}}
\end{align}
and so we find a dissection for this product.

By Lemma 3.9 of \cite{Garvan1} we have
\begin{align}
	\frac{1}{\aqprod{\zeta_5q,\zeta_5^{-1}q}{q}{\infty}}
	&= \frac{1}{\aqprod{q^5,q^{20}}{q^{25}}{\infty}}
		+ \frac{(\zeta_5+\zeta_5^{-1})q}{\aqprod{q^{10},q^{15}}{q^{25}}{\infty}}.
\end{align}

Replacing $q$ by $q^2$ in Lemma 3.18 in \cite{Garvan1} we have
\begin{align}
	\aqprod{q^2}{q^2}{\infty}
	&=
	\aqprod{q^{50}}{q^{50}}{\infty}
	\Parans{
		\frac{\aqprod{q^{20},q^{30}}{q^{50}}{\infty}}{\aqprod{q^{10},q^{40}}{q^{50}}{\infty}}
		-q^2
		-q^4\frac{\aqprod{q^{10},q^{40}}{q^{50}}{\infty}}{\aqprod{q^{20},q^{30}}{q^{50}}{\infty}}
	}.
\end{align}

Expanding the product of these two expressions then gives the result.
\end{proof}

\begin{proof}[Proof of Theorem \ref{ThreeDissectionM2Crank} ]
We see
\begin{align}
	\sum_{n=0}^\infty\sum_{m=-\infty}^\infty M2(m,n)\zeta_3^mq^n
	&=
	\frac{\aqprod{-q}{q^2}{\infty}\aqprod{q^2}{q^2}{\infty}^2}
		{\aqprod{q^6}{q^6}{\infty}}
.
\end{align}

We are then to show
\begin{align*}
	&\frac{\aqprod{-q}{q^2}{\infty}\aqprod{q^2}{q^2}{\infty}^2}
		{\aqprod{q^6}{q^6}{\infty}}
	\\
	=& 
		\frac{\aqprod{q^{18}}{q^{18}}{\infty}^{10}\aqprod{q^{12}}{q^{12}}{\infty}\aqprod{q^3}{q^3}{\infty}}
				{\aqprod{q^{36}}{q^{36}}{\infty}^4\aqprod{q^9}{q^9}{\infty}^4\aqprod{q^6}{q^6}{\infty}^3}
		+q\frac{\aqprod{q^{18}}{q^{18}}{\infty}^4}
			{\aqprod{q^{36}}{q^{36}}{\infty}\aqprod{q^9}{q^9}{\infty}\aqprod{q^6}{q^6}{\infty}}
		-2q^2\frac{\aqprod{q^{36}}{q^{36}}{\infty}^2\aqprod{q^9}{q^9}{\infty}^2\aqprod{q^6}{q^6}{\infty}}
			{\aqprod{q^{18}}{q^{18}}{\infty}^2\aqprod{q^{12}}{q^{12}}{\infty}\aqprod{q^3}{q^3}{\infty}}
.
\end{align*}

Noting $\aqprod{-q}{q^2}{\infty}\aqprod{q^2}{q^2}{\infty}^2 = \aqprod{-q}{-q}{\infty}\aqprod{q^2}{q^2}{\infty}$, 
in equation (\ref{crankDissectionEq1}) of the proof of Theorem \ref{ThreeDissectionOverpartionCrank} 
we replace $q$ by $-q$ and multiply by 
$\frac{\aqprod{-q^3}{-q^3}{\infty}}{\aqprod{q^6}{q^6}{\infty}}$ to get
\begin{align}
	&\frac{\aqprod{-q}{q^2}{\infty}\aqprod{q^2}{q^2}{\infty}^2}{\aqprod{q^6}{q^6}{\infty}}
	\nonumber
	\\
	\label{ThreeDissectionM2CrankEq1}
	=& 
		\frac{\aqprod{-q^9}{-q^9}{\infty}^4}
			{\aqprod{-q^3}{-q^3}{\infty}\aqprod{q^{18}}{q^{18}}{\infty}^2}
		+q\frac{\aqprod{q^{18}}{q^{18}}{\infty}\aqprod{-q^9}{-q^9}{\infty}}{\aqprod{q^6}{q^6}{\infty}}
		-2q^2\frac{\aqprod{q^{18}}{q^{18}}{\infty}^4\aqprod{-q^3}{-q^3}{\infty}}
			{\aqprod{-q^9}{-q^9}{\infty}^2\aqprod{q^6}{q^6}{\infty}^2}.
\end{align}

But we have
\begin{align}
	\label{ThreeDissectionM2CrankEq2}
	\aqprod{-q^3}{-q^3}{\infty}
	&= \frac{\aqprod{q^6}{q^6}{\infty}^3}{\aqprod{q^{12}}{q^{12}}{\infty}\aqprod{q^3}{q^3}{\infty}}
,
	\\
	\label{ThreeDissectionM2CrankEq3}
	\aqprod{-q^9}{-q^9}{\infty}
	&= \frac{\aqprod{q^{18}}{q^{18}}{\infty}^3}{\aqprod{q^{36}}{q^{36}}{\infty}\aqprod{q^9}{q^9}{\infty}}.
\end{align}

Equations (\ref{ThreeDissectionM2CrankEq2}) and 
(\ref{ThreeDissectionM2CrankEq3}) with 
(\ref{ThreeDissectionM2CrankEq1}) then give the theorem.
\end{proof}

\begin{proof}[Proof of Theorem \ref{FiveDissectionM2Crank}]
We have
\begin{align}
	\sum_{n=0}^\infty\sum_{m=-\infty}^\infty M2(m,n)\zeta_5^mq^n
	&=
		\frac{\aqprod{-q}{q^2}{\infty}\aqprod{q^2}{q^2}{\infty}}
	{\aqprod{\zeta_5q^2,\zeta_5^{-1}q^2}{q^2}{\infty}}
\end{align}
and so we find a dissection for this product.

Replacing $q$ by $q^2$ in Lemma 3.9 of \cite{Garvan1} we have
\begin{align}
	\frac{1}{\aqprod{\zeta_5q^2,\zeta_5^{-1}q^2}{q^2}{\infty}}
	&= \frac{1}{\aqprod{q^{10},q^{40}}{q^{50}}{\infty}}
		+ \frac{(\zeta_5+\zeta_5^{-1})q^2}{\aqprod{q^{20},q^{30}}{q^{50}}{\infty}}.
\end{align}

Next we note that $\aqprod{-q}{q^2}{\infty}\aqprod{q^2}{q^2}{\infty} = \aqprod{-q}{-q}{\infty}$ and so 
replacing $q$ by $-q$ in Lemma 3.18 in \cite{Garvan1} we have
\begin{align}
	&\aqprod{-q}{q^2}{\infty}\aqprod{q^2}{q^2}{\infty}
	\nonumber\\
	=&
	\aqprod{-q^{25}}{-q^{25}}{\infty}
	\Parans{
		\frac{\aqprod{q^{10},-q^{15}}{-q^{25}}{\infty}}{\aqprod{-q^{5},q^{20}}{-q^{25}}{\infty}}
		+q
		-q^2\frac{\aqprod{-q^{5},q^{20}}{-q^{25}}{\infty}}{\aqprod{q^{10},-q^{15}}{-q^{25}}{\infty}}
	}
	\\
	=&\aqprod{-q^{25},q^{50}}{q^{50}}{\infty}
	\Parans{
		\frac{\aqprod{q^{10},-q^{15},-q^{35},q^{40}}{q^{50}}{\infty}}
			{\aqprod{-q^{5},q^{20},q^{30},-q^{45}}{q^{50}}{\infty}}
		+q
		-q^2\frac{\aqprod{-q^{5},q^{20},q^{30},-q^{45}}{q^{50}}{\infty}}
			{\aqprod{q^{10},-q^{15},-q^{35},q^{40}}{q^{50}}{\infty}}
	}.
\end{align}

Multiplying out these two 5-dissections then gives
\begin{align*}
	&\frac{\aqprod{-q}{q^2}{\infty}\aqprod{q^2}{q^2}{\infty}}{\aqprod{\zeta_5q,\zeta_5^{-1}q}{q}{\infty}}
	\\
	=&
		\frac{\aqprod{-q^{15},-q^{25},-q^{35},q^{50}}{q^{50}}{\infty}}
		{\aqprod{-q^{5},q^{20},q^{30},-q^{45}}{q^{50}}{\infty}} 
	\\
	& + q\frac{\aqprod{-q^{25},q^{50}}{q^{50}}{\infty}}
			{\aqprod{q^{10},q^{40}}{q^{50}}{\infty}}
	\\
	&+q^2\Parans{
			(\zeta_5+\zeta_5^4)\frac{\aqprod{q^{10},-q^{15},-q^{25},-q^{35},q^{40},q^{50}}{q^{50}}{\infty}}
				{\aqprod{-q^{5},q^{20},q^{20},q^{30},q^{30},-q^{45}}{q^{50}}{\infty}}
			-\frac{\aqprod{-q^{5},q^{20},-q^{25},q^{30},-q^{45},q^{50}}{q^{50}}{\infty}}
				{\aqprod{q^{10},q^{10},-q^{15},-q^{35},q^{40},q^{40}}{q^{50}}{\infty}}
		}
	\\
	&+q^3(\zeta_5+\zeta_5^4)\frac{\aqprod{-q^{25},q^{50}}{q^{50}}{\infty}}
			{\aqprod{q^{20},q^{30}}{q^{50}}{\infty}}
	\\
	&-q^4(\zeta_5+\zeta_5^4)\frac{\aqprod{-q^{5},-q^{25},-q^{45},q^{50}}{q^{50}}{\infty}}
					{\aqprod{q^{10},-q^{15},-q^{35},q^{40}}{q^{50}}{\infty}}.
\end{align*}
This proves the proposition.
\end{proof}

\begin{proof}[Proof of Theorem \ref{ThreeDissectionExtraSeries} ]

We will use Ramanujan's functions 
\begin{align}
	f(a,b) &= \sum_{k=-\infty}^\infty a^{k(k+1)/2}b^{k(k-1)/2},
	\\
	\phi(q) &= f(q,q) = \sum_{k=-\infty}^\infty q^{k^2}.
\end{align}
By Entry 19 of \cite{Berndt} we have
\begin{align}
	f(a,b) = \aqprod{-a}{ab}{\infty}\aqprod{-b}{ab}{\infty}\aqprod{ab}{ab}{\infty}.
\end{align}

Also we have
\begin{align}\label{productForPhi}
	\phi(-q) &= \frac{\aqprod{q}{q}{\infty}}{\aqprod{-q}{q}{\infty}}
	= \frac{\aqprod{q}{q}{\infty}^2}{\aqprod{q^2}{q^2}{\infty}}.
\end{align}

\begin{proposition}
\begin{align}
	\label{prop1Eq1}
	\sum_{n=1}^\infty \frac{(-1)^nq^n(1-q^n)}{1-q^{3n}}
	&=
	\frac{-1}{6}\Parans{1- \frac{\aqprod{q}{q}{\infty}^6\aqprod{q^6}{q^6}{\infty}}
			{\aqprod{q^2}{q^2}{\infty}^3\aqprod{q^3}{q^3}{\infty}^2} }
	\\
	\label{prop1Eq2}
	&=
	\frac{-1}{6}\Parans{1-\frac{\phi(-q)^3}{\phi(-q^3)}} 
.
\end{align}
\end{proposition}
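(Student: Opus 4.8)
The plan is to peel off a single classical theta--product identity and reduce everything else to routine $q$--series manipulation. The equivalence of \eqref{prop1Eq1} and \eqref{prop1Eq2} is free: by \eqref{productForPhi}, $\phi(-q)=\aqprod{q}{q}{\infty}^2/\aqprod{q^2}{q^2}{\infty}$ and, replacing $q$ by $q^3$, $\phi(-q^3)=\aqprod{q^3}{q^3}{\infty}^2/\aqprod{q^6}{q^6}{\infty}$, so $\phi(-q)^3/\phi(-q^3)$ is exactly the eta--quotient in \eqref{prop1Eq1}. (It is also $b(q)^2/b(q^2)$, where $b(q)=\aqprod{q}{q}{\infty}^3/\aqprod{q^3}{q^3}{\infty}$ is the Borwein cubic theta function, which already points to the route below.) So it suffices to prove \eqref{prop1Eq1}.

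To rewrite the left side, note that $\frac{1-q^n}{1-q^{3n}}=\frac{1}{1+q^n+q^{2n}}$ and $\frac{q^n-q^{2n}}{1-q^{3n}}=\sum_{j\ge0}\Parans{q^{(3j+1)n}-q^{(3j+2)n}}$; interchanging the two summations (legitimate coefficientwise) and using $\sum_{n\ge1}(-1)^nx^n=-x/(1+x)$ gives
\begin{align*}
\sum_{n=1}^\infty\frac{(-1)^nq^n(1-q^n)}{1-q^{3n}}
&=-\sum_{m\equiv1\pmod{3}}\frac{q^m}{1+q^m}+\sum_{m\equiv2\pmod{3}}\frac{q^m}{1+q^m}\\
&=-\sum_{m=1}^\infty\Jac{m}{3}\frac{q^m}{1+q^m},
\end{align*}
where $\Jac{m}{3}=\chi_{-3}(m)$ is the nonprincipal character modulo $3$. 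The elementary identity $\frac{q^m}{1+q^m}=\frac{q^m}{1-q^m}-\frac{2q^{2m}}{1-q^{2m}}$ then converts this to $-L(q)+2L(q^2)$, where $L(q):=\sum_{m\ge1}\Jac{m}{3}\frac{q^m}{1-q^m}$.

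Now recall the classical Lambert expansion of the Borwein function $a(q):=\sum_{m,n\in\mathbb Z}q^{m^2+mn+n^2}$, namely $a(q)=1+6L(q)$ (equivalently, the number of representations of $N\ge1$ by $m^2+mn+n^2$ equals $6\sum_{d\mid N}\Jac{d}{3}$). Combining with the previous paragraph,
\[
\sum_{n=1}^\infty\frac{(-1)^nq^n(1-q^n)}{1-q^{3n}}=-L(q)+2L(q^2)=\frac{2a(q^2)-a(q)-1}{6},
\]
so \eqref{prop1Eq1} is equivalent to the product evaluation $2a(q^2)-a(q)=\phi(-q)^3/\phi(-q^3)$.

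This last identity is the only genuine step, and I expect it to be the main obstacle. I would establish it in one of two ways. Route one, elementary: start from Ramanujan's representation $a(q)=\phi(q)\phi(q^3)+4q\psi(q^2)\psi(q^6)$ and from $\phi(-q)=\phi(q^4)-2q\psi(q^8)$; expanding the cube of the latter and combining with the standard $2$--dissections of $\phi(q)\phi(q^3)$ and of $\psi(q^2)\psi(q^6)$ reduces the claim to an identity between two explicitly $2$--dissected eta--products, which one checks by matching the even and the odd parts. Route two, modular forms: both $2a(q^2)-a(q)$ and $\phi(-q)^3/\phi(-q^3)$ lie in $M_1\Parans{\Gamma_0(12),\chi_{-3}}$, a space spanned by $a(q),a(q^2),a(q^4)$, so the identity follows by comparing a few initial $q$--coefficients (together with the standard holomorphy--at--cusps check for the eta--quotient). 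Everything up to this point is purely formal; only the $2$--dissection bookkeeping in Route one (or the cusp/character verifications in Route two) requires any real care.
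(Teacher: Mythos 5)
Your reductions are all correct, and the route is genuinely different from the paper's. The paper's own proof is essentially a citation: after introducing the divisor-counting functions $E_r(N;m)$ and writing the left side as $\sum_{N}q^N\Parans{E_2(N;3)-2E_2(N/2;3)}$, it quotes equation (32.64) of Fine for the product evaluation, and then obtains \eqref{prop1Eq2} from \eqref{productForPhi} exactly as you do. You instead unwind the Lambert series into $-L(q)+2L(q^2)$ with $L(q)=\sum_{m\ge1}\Jac{m}{3}\frac{q^m}{1-q^m}$, invoke the Lorenz--Jacobi identity $a(q)=1+6L(q)$ for the Borwein function, and thereby reduce the proposition to the single weight-one identity $2a(q^2)-a(q)=\phi(-q)^3/\phi(-q^3)=b(q)^2/b(q^2)$; this identity is true (it is precisely the content of Fine's (32.64) repackaged through $a(q)$), and I have checked your series manipulations leading to it, including the step $\frac{q^m}{1+q^m}=\frac{q^m}{1-q^m}-\frac{2q^{2m}}{1-q^{2m}}$. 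What your approach buys is self-containedness: rather than outsourcing the analytic content to Fine, you isolate it as one classical eta-quotient identity with two standard proofs. The one caveat is that this final identity is only sketched, not executed; of your two routes, the modular one closes most quickly, since $M_1(\Gamma_0(12),\chi_{-3})$ has no cusp forms and its Eisenstein subspace is spanned by $a(q)$, $a(q^2)$, $a(q^4)$, so after the (routine but necessary) verification that the eta quotient $\aqprod{q}{q}{\infty}^6\aqprod{q^6}{q^6}{\infty}\aqprod{q^2}{q^2}{\infty}^{-3}\aqprod{q^3}{q^3}{\infty}^{-2}$ is holomorphic at all cusps of $\Gamma_0(12)$ with the right character, the Sturm bound for weight $1$ and index $24$ means matching coefficients through $q^2$ already suffices. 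Either route should be written out before this counts as a complete proof, but there is no gap in the ideas.
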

\begin{proof} As in \cite{Fine} we let 
\begin{align}
	E_r(N;m) &= \sum_{\substack{d\mid N\\d\equiv r \pmod{m}}} 1 
					-\sum_{\substack{d\mid N\\d\equiv -r \pmod{m}}} 1.
\end{align}
Thus 
\begin{align}
	\sum_{N=1}^\infty q^N E_r(N;m)
	&= \sum_{n=1}^\infty\sum_{k=0}^\infty q^{kmn+rn}-q^{kmn+(m-r)n}
	\\
	&= \sum_{n=1}^\infty \frac{q^{rn}-q^{(m-r)n}}{1-q^{mn}}.
\end{align}
Similarly we have
\begin{align}
	\sum_{N=1}^\infty q^N\Parans{E_r(N;m) - 2E_r(N/2;m)}
	&= \sum_{n=1}^\infty \frac{(-1)^n(q^{rn}-q^{(m-r)n})}
		{1-q^{mn}}.
\end{align}

Then equation (\ref{prop1Eq1}) is given by equation (32.64) of \cite{Fine}
and (\ref{prop1Eq2}) follows from (\ref{productForPhi}).
\end{proof}

With this we then have
\begin{align}
	\frac{\aqprod{-q}{q}{\infty}}{\aqprod{q}{q}{\infty}}
	\Parans{
		\frac{1}{2}
		+
		\sum_{n=1}^\infty
			\frac{(1-\zeta_3)(1-\zeta_3^{-1})(-1)^nq^n}
			{(1-\zeta_3q^n)(1-\zeta_3^{-1}q^n)}
	}	
	&= 
		\frac{\aqprod{-q}{q}{\infty}}{\aqprod{q}{q}{\infty}}
		\Parans{
			\frac{1}{2}
			+
			3\sum_{n=1}^\infty 
				\frac{(-1)^nq^n(1-q^n)}{(1-q^{3n})}
		}	
	\\
	&= 
		\frac{\aqprod{-q}{q}{\infty}\phi(-q)^3}
		{2\aqprod{q}{q}{\infty}\phi(-q^3)}
	\\
	&= 
		\frac{\aqprod{q}{q}{\infty}^2\aqprod{-q^3}{q^3}{\infty}}
		{2\aqprod{-q}{q}{\infty}^2\aqprod{q^3}{q^3}{\infty}}		
	\\\label{ProofExtraSeries3DissectionEq1}
	&= 
		\frac{\aqprod{q}{q}{\infty}^2\aqprod{q^6}{q^6}{\infty}}
		{2\aqprod{-q}{q}{\infty}^2\aqprod{q^3}{q^3}{\infty}^2}
.
\end{align}

\begin{proposition}\label{ExtraSeries3DisectionProposition1}
\begin{align}
	\frac{\aqprod{q}{q}{\infty}^2}{\aqprod{-q}{q}{\infty}^2}
	&= 	
		\frac{\aqprod{q^9}{q^9}{\infty}^4}
			{\aqprod{q^{18}}{q^{18}}{\infty}^2}
		-4q\frac{\aqprod{q^{18}}{q^{18}}{\infty}\aqprod{q^9}{q^9}{\infty}\aqprod{q^3}{q^3}{\infty}}
					{\aqprod{q^6}{q^6}{\infty}}
		+4q^2\frac{\aqprod{q^{18}}{q^{18}}{\infty}^4\aqprod{q^3}{q^3}{\infty}^2}
					{\aqprod{q^9}{q^9}{\infty}^2\aqprod{q^6}{q^6}{\infty}^2}.
\end{align}
\end{proposition}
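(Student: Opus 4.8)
The plan is to derive the claimed $3$-dissection by first dissecting $\phi(-q)$ modulo $3$ and then squaring. Note that, by \eqref{productForPhi}, $\phi(-q)=\aqprod{q}{q}{\infty}/\aqprod{-q}{q}{\infty}$, so the left-hand side of the proposition is precisely $\phi(-q)^2$.

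First I would split $\phi(-q)=\sum_{n=-\infty}^{\infty}(-1)^nq^{n^2}$ over the residue classes $n\equiv 0,1,2\pmod 3$. The class $n\equiv 0$ contributes $\phi(-q^9)$. Writing $n=3m+1$ for one of the remaining classes and $n=3m+2$ for the other, and then substituting $m\mapsto -m-1$ in the latter, both sums become $\sum_m -(-1)^mq^{9m^2+6m+1}$; here the relation $(-1)^{-m-1}=-(-1)^m$ is what makes the two contributions reinforce rather than cancel. Hence they combine to $-2q\sum_m(-1)^mq^{9m^2+6m}$, and by the definition of $f$ this last sum is $f(-q^3,-q^{15})$. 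This gives
\[
\phi(-q)=\phi(-q^9)-2q\,f(-q^3,-q^{15}).
\]

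Squaring yields $\phi(-q)^2=\phi(-q^9)^2-4q\,\phi(-q^9)f(-q^3,-q^{15})+4q^2f(-q^3,-q^{15})^2$, and a glance at exponents shows this is already the $3$-dissection: $\phi(-q^9)^2$ is a series in $q^9$, while the exponents $9m^2-6m$ appearing in $f(-q^3,-q^{15})$ are all divisible by $3$, so the middle term contributes only powers $\equiv 1$ and the last term only powers $\equiv 2$ modulo $3$. It then remains to put each piece in product form. From \eqref{productForPhi}, $\phi(-q^9)=\aqprod{q^9}{q^9}{\infty}^2/\aqprod{q^{18}}{q^{18}}{\infty}$, so $\phi(-q^9)^2$ is exactly the first summand on the right-hand side. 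For $f(-q^3,-q^{15})$ I would use the Jacobi triple product form $f(a,b)=\aqprod{-a}{ab}{\infty}\aqprod{-b}{ab}{\infty}\aqprod{ab}{ab}{\infty}$ (Entry 19 of \cite{Berndt}) with $a=-q^3$, $b=-q^{15}$, getting $f(-q^3,-q^{15})=\aqprod{q^3,q^{15}}{q^{18}}{\infty}\aqprod{q^{18}}{q^{18}}{\infty}$; then the identities $\aqprod{q^3,q^9,q^{15}}{q^{18}}{\infty}=\aqprod{q^3}{q^6}{\infty}=\aqprod{q^3}{q^3}{\infty}/\aqprod{q^6}{q^6}{\infty}$ and $\aqprod{q^9}{q^{18}}{\infty}=\aqprod{q^9}{q^9}{\infty}/\aqprod{q^{18}}{q^{18}}{\infty}$ simplify it to $f(-q^3,-q^{15})=\dfrac{\aqprod{q^3}{q^3}{\infty}\aqprod{q^{18}}{q^{18}}{\infty}^2}{\aqprod{q^6}{q^6}{\infty}\aqprod{q^9}{q^9}{\infty}}$. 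Substituting this into the middle and last terms of the squared identity and cancelling produces exactly the two remaining summands of the proposition.

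The computation is entirely routine; the only step that needs care is the sign bookkeeping when merging the $n\equiv 1$ and $n\equiv 2$ classes in the dissection of $\phi(-q)$, and after that only standard manipulations of $q$-products are required.
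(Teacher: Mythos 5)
Your proof is correct and follows essentially the same route as the paper: both rest on the $3$-dissection $\phi(-q)=\phi(-q^9)-2q\,f(-q^3,-q^{15})$, conversion to eta-products, and squaring. The only difference is that you derive the dissection of $\phi(-q)$ directly from the theta series by splitting over residues of $n$ modulo $3$, whereas the paper simply quotes $\phi(q)=\phi(q^9)+2qf(q^3,q^{15})$ from the Corollary to Entry 31 of Berndt and replaces $q$ by $-q$; your sign bookkeeping and product simplifications all check out.
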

\begin{proof}
By the Corollary (page 49) to Entry 31 of \cite{Berndt}, 
we have
\begin{align}
	\phi(q) &= \phi(q^9) + 2qf(q^3,q^{15}).
\end{align}
Replacing $q$ by $-q$ we find that
\begin{align}
	\phi(-q) 
	&= 
		\frac{\aqprod{q^9}{q^9}{\infty}^2}{\aqprod{q^{18}}{q^{18}}{\infty}}
		-2q\aqprod{q^3,q^{15},q^{18}}{q^{18}}{\infty}
	\nonumber\\
	&= 
		\frac{\aqprod{q^9}{q^9}{\infty}^2}{\aqprod{q^{18}}{q^{18}}{\infty}}
		-2q\frac{\aqprod{q^{18}}{q^{18}}{\infty}^2\aqprod{q^3}{q^{3}}{\infty}}
			{\aqprod{q^9}{q^9}{\infty}\aqprod{q^6}{q^6}{\infty}}
.
\end{align}

Thus
\begin{align}
	\frac{\aqprod{q}{q}{\infty}^2}{\aqprod{-q}{q}{\infty}^2}
	=& \phi(-q)^2
	\nonumber\\
	=&
		\frac{\aqprod{q^9}{q^9}{\infty}^4}{\aqprod{q^{18}}{q^{18}}{\infty}^2}
		-4q\frac{\aqprod{q^{18}}{q^{18}}{\infty}\aqprod{q^9}{q^9}{\infty}\aqprod{q^3}{q^3}{\infty}}
				{\aqprod{q^6}{q^6}{\infty}}
		+4q^2\frac{\aqprod{q^{18}}{q^{18}}{\infty}^4 \aqprod{q^3}{q^3}{\infty}^2}
				{\aqprod{q^9}{q^9}{\infty}^2\aqprod{q^6}{q^6}{\infty}^2}.
\end{align}
This proves the proposition.

\end{proof}

With equation (\ref{ProofExtraSeries3DissectionEq1}) and Proposition 
\ref{ExtraSeries3DisectionProposition1},  we have finished the proof of
Theorem \ref{ThreeDissectionExtraSeries}. 

\end{proof}


\begin{proof}[Proof of Theorem \ref{TheoremTwoDissectionSBars}]
We can determine $\SB(i,q)$, $\SB_1(i,q)$, and $\SB_2(i,q)$ from formulas about $\phi$:
\begin{align}
	\label{REntry25.ii}
	\phi(q)-\phi(-q) &= 4\sum_{n=1}^\infty q^{(2n-1)^2}
,
	\\
	\label{REntry25.iii}
	\phi(q)\phi(-q) &= \phi(-q^2)^2
,
	\\
	\label{REntry8.v}
	\phi(-q^2)^2 &= 1 + 4\sum_{n=1}^\infty \frac{(-1)^nq^{n^2+n}}{1+q^{2n}}
,	
	\\
	\label{REntry8.v.p}
	\phi(-q)^2 &= 1 + 4\sum_{n=1}^\infty \frac{(-1)^nq^{n}}{1+q^{2n}}
.		
\end{align}
These equalities can all be found in Ramanujan's Notebooks part III by Berndt \cite{Berndt}.
Equation (\ref{REntry25.ii}) is by Entry 22.i on page 36,
(\ref{REntry25.iii})  is Entry 25.iii on page 40,
(\ref{REntry8.v}) is Entry 8.v on page 114 with $q$ replaced by $q^2$, and
(\ref{REntry8.v.p}) is on page 116 as part of the proof of Entry 8.v.

As in the proof of Theorem \ref{SptBarRankCrank} we have
\begin{align*}
	\SB(z,q) 
	&=
	\frac{\aqprod{-q}{q}{\infty}}{(1-z)(1-z^{-1})\aqprod{q}{q}{\infty}}
	\Parans{1+
		2\sum_{n=1}^\infty \frac{(1-z)(1-z^{-1})(-1)^nq^{n^2+n}}
			{(1-zq^n)(1-z^{-1}q^n)}
	}	 
	- \frac{\aqprod{q^2}{q^2}{\infty}}{\aqprod{z,z^{-1}}{q}{\infty}}
,
\end{align*}
thus
\begin{align*}
	\SB(i,q) 
	&=
	\frac{\aqprod{-q}{q}{\infty}}{2\aqprod{q}{q}{\infty}}
	\Parans{1+
		4\sum_{n=1}^\infty \frac{(-1)^nq^{n^2+n}}
			{1+q^{2n}}
	}	 
	- \frac{\aqprod{q^2}{q^2}{\infty}}{2\aqprod{-q^2}{q^2}{\infty}}
	\\
	&=
	\frac{\phi(-q^2)^2}{2\phi(-q)} - \frac{\phi(-q^2)}{2}
	\\
	&=
	\frac{\phi(q)}{2} - \frac{\phi(-q^2)}{2}
	\\
	&= \sum_{n=1}^\infty q^{n^2} - \sum_{n=1}^\infty (-1)^n q^{2n^2}
.
\end{align*}

By (\ref{SeriesForS1Bar}) we have
\begin{align}
	\SB_1(i,q)
	=& \frac{1}{\phi(-q)}\sum_{n=1}^\infty\frac{(-1)^nq^n(q^{n^2}-1)}{1+q^{2n}}
	\\
	=& \frac{1}{\phi(-q)}
			\Parans{
				\frac{\phi(-q^2)^2-\phi(-q)^2}{4}
			}
	\\
	=&\frac{\phi(-q)}{\phi(-q)}
			\Parans{
				\frac{\phi(q)-\phi(-q)}{4}
			}
	\\
	=& \sum_{n=1}^\infty q^{(2n-1)^2}
.
\end{align}

Lastly, since $\SB_2(z,q) = \SB(z,q) - \SB_1(z,q)$, we have
\begin{align*}
	\SB_2(i,q) 
	&= 
	\sum_{n=1}^\infty q^{(2n)^2} - \sum_{n=1}^\infty (-1)^n q^{2n^2}
.
\end{align*}
\end{proof}

\begin{proof}[Proof of Theorem \ref{FiveDissectionExtraSeries} ]

To start we set
\begin{align}
	C(\tau) 
	&=
	3 + 10\sum_{n=1}^\infty \frac{(-1)^n(q^n-q^{4n})}{1-q^{5n}}
,
	\\
	D(\tau)
	&=
	1 + 10\sum_{n=1}^\infty \frac{(-1)^n(q^{2n}-q^{3n})}{(1-q^{5n})}.
\end{align}
We claim $C(\tau)$ and $D(\tau)$ are elements of $M_1(\Gamma_1(10))$. 

First we define a primitive Dirichlet character modulo $5$ by
\begin{align}
	\chi_{5}(n) &= 	
	\left\{
   	\begin{array}{lll}
      	1 & \hspace{15pt}\mbox{ if } n\equiv 1 \pmod{5}\\
			i & \hspace{15pt}\mbox{ if } n\equiv 2 \pmod{5}\\
			-i & \hspace{15pt}\mbox{ if } n\equiv 3 \pmod{5}\\
       	-1 & \hspace{15pt}\mbox{ if } n\equiv 4 \pmod{5}\\
			0 & \hspace{15pt}\mbox{ otherwise}.
     	\end{array}
	\right.
\end{align}
We then also have a primitive Dirichlet character given by the conjugate $\overline{\chi_5}$.

As in \cite{Kohlberg} and \cite{Garvan2} we set
\begin{align}
	V_{\chi_5,1}(\tau)
	=& 
	\frac{3+i}{10}
	+\sum_{m=1}^\infty \sum_{n=1}^\infty \chi_5(n)q^{mn}
	\\
	&= \frac{3+i}{10} +\sum_{m=1}^\infty\frac{q^m - q^{4m}}{1-q^{5m}}
			+i\sum_{m=1}^\infty\frac{q^{2m} - q^{3m}}{1-q^{5m}}
\end{align}
and
\begin{align}
	V_{\overline{\chi_5},1}(\tau) 
	=& 
	\frac{3-i}{10}
	+\sum_{m=1}^\infty\sum_{n=1}^\infty \overline{\chi_5}(n)q^{mn}
	\\
	&= \frac{3-i}{10} +\sum_{m=1}^\infty\frac{q^m - q^{4m}}{1-q^{5m}}
			-i\sum_{m=1}^\infty\frac{q^{2m} - q^{3m}}{1-q^{5m}}.
\end{align}
Then $V_{\chi_5,1}(\tau)\in M_1(\Gamma_0(5),\chi_5)$ 
and $V_{\overline{\chi_5},1}(\tau)\in M_1(\Gamma_0(5),\overline{\chi_5})$. 
Thus we have $V_{\chi_5,1}(2\tau)\in M_1(\Gamma_0(10),\chi_5)$ 
and $V_{\overline{\chi_5},1}(2\tau)\in M_1(\Gamma_0(10),\overline{\chi_5})$.
Here $M_k(\Gamma,\chi)$ is the vector space of holomorphic modular forms
of weight $k$ with respect to the subgroup $\Gamma$ of $\Gamma_0$ and with
character $\chi$.

We see
\begin{align}
	C(\tau) &= 5\Parans{
						2V_{\chi_5,1}(2\tau) - V_{\chi_5,1}(\tau) 
						+ 2V_{\overline{\chi_5},1}(2\tau) - V_{\overline{\chi_5},1}(\tau)
					}
,
	\\
	D(\tau) &= -i5\Parans{
						2V_{\chi_5,1}(2\tau) - V_{\chi_5,1}(\tau) 
						- 2V_{\overline{\chi_5},1}(2\tau) + V_{\overline{\chi_5},1}(\tau)
					}
,    
\end{align}
but since the characters are different, we must move from $\Gamma_0$ to $\Gamma_1$. 
That is to say we have
 $C(\tau),D(\tau)\in M_1(\Gamma_1(10))$. Noting $\frac{\eta(2\tau)^2}{\eta(\tau)^4}$ 
is a modular form of weight $-1$ for $\Gamma_1(8)$, we have then 
that $C(\tau)\frac{\eta(2\tau)^2}{\eta(\tau)^4}$ 
and $D(\tau)\frac{\eta(2\tau)^2}{\eta(\tau)^4}$ are
 modular functions with respect to $\Gamma_1(40)$.
By modular function, we mean a modular form of weight zero.

We use the following generalized eta notation as in \cite{Robins},
\begin{align}
	\eta_{\delta,g}(\tau)
	&= e^{\pi i P_2(\frac{g}{\delta})\delta\tau}
		\prod_{\substack{m>0\\ m\equiv g\pmod{\delta} }}(1-q^m)
		\prod_{\substack{m>0\\ m\equiv -g\pmod{\delta} }}(1-q^m)
\end{align}
where
\begin{align}
	P_2(t) &= \CBrackets{t}^2 - \CBrackets{t} + \frac{1}{6}.
\end{align}

So for $g=0$ we have
\begin{align}
	\eta_{\delta,0}(\tau)
	&= q^{\frac{\delta}{12}}\aqprod{q^\delta}{q^\delta}{\infty}^2
	= \eta(\delta\tau)^2
\end{align}
and for $0<g<\delta$ we have
\begin{align}
	\eta_{\delta,g}(\tau)
	&= q^{\frac{P_2(\frac{g}{\delta})\delta}{2}} \aqprod{q^g,q^{\delta-g}}{q^\delta}{\infty}.
\end{align}

\begin{proposition}\label{ExtraSeries5DissectionProp1}
\begin{align}\label{propSomethingEq1}
	C(\tau)\frac{\eta(2\tau)}{\eta(\tau)^2}
	&= C_0(q^5) + qC_1(q^5) + q^2C_2(q^5) + q^3C_3(q^5) + q^4C_4(q^5)
\end{align}
where
\begin{align}
	C_0(q) &= \frac{\aqprod{q^5}{q^5}{\infty}^2\aqprod{q^2}{q^2}{\infty}^2}
					{\aqprod{q^{10}}{q^{10}}{\infty}\aqprod{q}{q}{\infty}^4}
				C(\tau)
,
	\\
	C_1(q) &= -4\frac{\aqprod{q^4,q^6,q^{10}}{q^{10}}{\infty}}
				{\aqprod{q^2,q^8}{q^{10}}{\infty}^2\aqprod{q^3,q^7}{q^{10}}{\infty}}
,
	\\
	C_2(q) &= 2\frac{\aqprod{q^{10}}{q^{10}}{\infty}}
				{\aqprod{q^1,q^9}{q^{10}}{\infty}\aqprod{q^4,q^6}{q^{10}}{\infty}}
,	
	\\
	C_3(q) &= -6\frac{\aqprod{q^{10}}{q^{10}}{\infty}}
				{\aqprod{q^2,q^8}{q^{10}}{\infty}\aqprod{q^3,q^7}{q^{10}}{\infty}}
,
	\\
	C_4(q) &= 2\frac{\aqprod{q^2,q^8,q^{10}}{q^{10}}{\infty}}
				{\aqprod{q,q^9}{q^{10}}{\infty}\aqprod{q^4,q^6}{q^{10}}{\infty}^2} 
.
\end{align}
\end{proposition}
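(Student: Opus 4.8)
The plan is to prove \eqref{propSomethingEq1} by realizing both sides as (meromorphic) modular forms of the same weight and multiplier on a single congruence subgroup, and then reducing the identity to a finite check of Fourier coefficients. From the discussion preceding the proposition, $C(\tau)\in M_1(\Gamma_1(10))$, while $\eta(2\tau)/\eta(\tau)^2$ has weight $-1/2$ and small level; in fact $\eta(2\tau)/\eta(\tau)^2=\aqprod{q^2}{q^2}{\infty}/\aqprod{q}{q}{\infty}^2=1/\phi(-q)$, which is holomorphic and nonvanishing on $\mathbb H$, so $G(\tau):=C(\tau)\,\eta(2\tau)/\eta(\tau)^2$ is a weight-$1/2$ modular form on $\Gamma_1(40)$ (with a theta-multiplier) whose only possible poles lie at the cusps. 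The right-hand side of \eqref{propSomethingEq1} is built from the generalized eta functions $\eta_{\delta,g}$ in the paper's notation: after $q\mapsto q^5$, each $q^r C_r(q^5)$ is such a generalized-eta quotient of weight $1/2$ on a group of level $200$, except $C_0$, which is a generalized-eta quotient times $C(5\tau)\in M_1(\Gamma_1(50))$. Using the Ligozat/Robins transformation formulas one checks that each piece transforms correctly on $\Gamma_1(200)$, so both sides of \eqref{propSomethingEq1}, viewed at level $200$, are modular forms of weight $1/2$ with the same multiplier.

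Next I would dissect the left-hand side. Since $5\mid 200$, the multisection operator $f\mapsto \tfrac{1}{5}\sum_{j=0}^{4}\zeta_5^{-rj}f(\tau+j/5)$ (equivalently a composition of $U_5$ and $V_5$) sends $G$ to its part supported on exponents $\equiv r\pmod 5$, and this part is again a weight-$1/2$ form on $\Gamma_1(200)$ after the usual conjugation; thus \eqref{propSomethingEq1} splits into five equalities of the form ``$r$-th dissection component of $G=q^rC_r(q^5)$''. To settle each one I would clear the cusp poles by multiplying through by a sufficiently high power $\eta(\tau)^N$, landing in a genuine holomorphic space $M_k(\Gamma_1(200))$ of known finite dimension; the difference of the two sides then vanishes identically as soon as it vanishes to order exceeding the Sturm/valence bound, so finitely many Fourier coefficients suffice. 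A more hands-on alternative, parallel to the proofs of Theorems \ref{FiveDissectionOverpartionCrank}--\ref{FiveDissectionM2Crank}, is to write $\eta(2\tau)/\eta(\tau)^2=1/\phi(-q)$, obtain the $5$-dissection of $1/\phi(-q)$ from Lemma 3.9 and Lemma 3.18 of \cite{Garvan1} (with $q\mapsto -q$, exactly as in the proof of Theorem \ref{FiveDissectionM2Crank}), multiply this against the evident $5$-dissection of $C(\tau)$, and collect like powers of $q$, simplifying the resulting infinite products.

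The main obstacle is making the first step rigorous: verifying that each generalized-eta quotient $C_r$ is indeed modular of the claimed weight and multiplier on $\Gamma_1(200)$, i.e.\ checking the order conditions at every cusp, and then computing the Sturm bound, which is not small at level $200$, and checking the required coefficients. The half-integral weight and the $C(5\tau)$ factor inside $C_0$ (which prevents a reduction purely to $\eta$-quotients) add bookkeeping; if one instead follows the elementary route, the corresponding difficulty is producing a clean $5$-dissection of $1/\phi(-q)$ and then simplifying the resulting fivefold product of infinite products into the compact forms stated for $C_1,\dots,C_4$.
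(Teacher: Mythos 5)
Your first route is, in substance, the approach the paper takes: reduce (\ref{propSomethingEq1}) to an identity among modular objects of level $200$ and settle it by checking finitely many coefficients. The one structural difference is the normalization. Rather than working in weight $1/2$ with a theta multiplier, the paper multiplies both sides of (\ref{propSomethingEq1}) by a second factor of $\eta(2\tau)/\eta(\tau)^2$, so that every term becomes a weight-zero modular function on $\Gamma_1(200)$: the left side because $C(\tau)\,\eta(2\tau)^2/\eta(\tau)^4$ is such, the $C_0(q^5)$ term because it is $C(5\tau)$ times an explicit weight $-1$ eta quotient (Ono's Theorem 1.64), and the remaining terms as generalized eta quotients via Robins' Theorem 3. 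It then applies the valence formula directly to the difference $f$ of the two sides --- no prior dissection into five pieces and no clearing of poles by a power of $\eta$ is needed, since for a weight-zero function one only needs $\sum_{\zeta}Ord_\Gamma(f;\zeta)=0$: a computed lower bound of $-1840$ on the total cusp order away from $\infty$ (via Robins' Theorem 4 over the $336$ inequivalent cusps of $\Gamma_1(200)$), together with a Maple verification that $f$ vanishes past $q^{1840}$, forces $f\equiv0$. Your version would also work in principle, but it buys you the multiplier bookkeeping of half-integral weight, a conjugation argument for each multisection component, and a Sturm bound in a holomorphic space after multiplying by $\eta(\tau)^N$, all of which the paper's weight-zero normalization avoids.

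Your ``more hands-on alternative'' has a genuine gap: it presumes an ``evident $5$-dissection of $C(\tau)$,'' but no such thing is available in product form. $C(\tau)$ is a weight-one Eisenstein-type Lambert series, $3+10\sum_{n\ge1}(-1)^n(q^n-q^{4n})/(1-q^{5n})$, and the residue class modulo $5$ of an exponent $nj$ (with $j\equiv\pm1\pmod 5$) depends jointly on $n$ and $j$; its dissection components are not eta quotients --- indeed the proposition itself leaves $C_0$ expressed in terms of $C(\tau)$ precisely because the $0$-component admits no clean product evaluation. So this route is not parallel to the proofs of Theorems \ref{FiveDissectionOverpartionCrank}--\ref{FiveDissectionM2Crank}, where both factors of the relevant product had explicit product dissections; it would reduce the problem to further nontrivial Lambert-series identities rather than to a mechanical multiplication of two known dissections.
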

\begin{proof}

Multiplying both sides of (\ref{propSomethingEq1}) by $\frac{\eta(2\tau)}{\eta(\tau)^2}$ and noting the powers of $q$ from $\eta_{\delta,g}$ really do match, we see this proposition is equivalent to
\begin{align}\label{propSomethingEq2}
	C(\tau)\frac{\eta(2\tau)^2}{\eta(\tau)^4}
	=&  
		\frac{\eta(2\tau)\eta(25\tau)^2\eta(10\tau)^2}{\eta(\tau)^2\eta(50\tau)\eta(5\tau)^4}
		C(5\tau)
		\nonumber\\
		&-
		4\frac{\GEta{2}{0}{\tau}^{1/2}\GEta{50}{0}{\tau}^{1/2}\GEta{50}{20}{\tau}}
		{\GEta{1}{0}{\tau}\GEta{50}{10}{\tau}^2\GEta{50}{15}{\tau}}
		\nonumber\\
		&+
		2\frac{\GEta{2}{0}{\tau}^{1/2}\GEta{50}{0}{\tau}^{1/2}}
		{\GEta{1}{0}{\tau}\GEta{50}{5}{\tau}\GEta{50}{20}{\tau}}
		\nonumber\\
		&-	
		6\frac{\GEta{2}{0}{\tau}^{1/2}\GEta{50}{0}{\tau}^{1/2}}
		{\GEta{1}{0}{\tau}\GEta{50}{10}{\tau}\GEta{50}{15}{\tau}}
		\nonumber\\
		&+
		2\frac{\GEta{2}{0}{\tau}^{1/2}\GEta{50}{0}{\tau}^{1/2}\GEta{50}{10}{\tau}}
		{\GEta{1}{0}{\tau}\GEta{50}{5}{\tau}\GEta{50}{20}{\tau}^2}
		\nonumber
.
\end{align}

However we have that 
$\frac{\eta(2\tau)\eta(25\tau)^2\eta(10\tau)^2}{\eta(\tau)^2\eta(50\tau)\eta(5\tau)^4}$ 
is a weight $-1$ modular form for $\Gamma_1(200)$ by Theorem 1.64 of \cite{Ono1} and so 
\begin{align*}
	\frac{\eta(2\tau)\eta(25\tau)^2\eta(10\tau)^2}{\eta(\tau)^2\eta(50\tau)\eta(5\tau)^4}
	C(5\tau)
\end{align*}
is a modular function for $\Gamma_1(200)$. By Theorem 3 of \cite{Robins}, 
the four other generalized eta quotients on the right hand side
of (\ref{propSomethingEq2}) are also modular functions on $\Gamma_1(200)$.

We recall some facts about modular functions as in \cite{Rankin} and use 
the notation in section 20 of \cite{Berndt}. 
Suppose $f$ is a modular function with respect to the congruence subgroup $\Gamma$ 
of $\Gamma_0(1)$. For $A\in\Gamma_0(1)$ we have a cusp given 
by $\zeta=A^{-1}\infty$. The width of the cusp $N=N(\Gamma,\zeta)$ is
given by
\begin{align}
	N(\Gamma,\zeta) &= \min\{k>0:\pm A^{-1}T^kA\in\Gamma\},
\end{align}
where $T$ is the translation matrix 
\begin{align*}
	T &= {\Parans{\begin{array}{cc}
				1&1\\
				0&1
			\end{array}}}
.
\end{align*}

If
\begin{align}
	f(A^{-1}\tau) &= \sum_{m=m_0}^\infty b_mq^{m/N}
\end{align}
and $b_{m_0}\not=0$, then we say $m_0$ is the order of $f$ at
$\zeta$ with respect to $\Gamma$ and we denote this value 
by $Ord_\Gamma(f;\zeta)$. By
$ord(f;\zeta)$ we mean the invariant order of $f$ at $\zeta$
given by
\begin{align}
	ord(f;\zeta) = \frac{Ord_\Gamma(f;\zeta)}{N}.
\end{align}

For $z$ in the upper half plane $\mathcal{H}$, we write 
$ord(f;z)$ for the order of $f$ at $z$ as an analytic function
in $z$. We define the order of $f$ at $z$ with respect to
$\Gamma$ by
\begin{align}
	Ord_\Gamma(f;z) = \frac{ord(f;z)}{m},
\end{align}
where $m$ is the order of $z$ as a fixed point of $\Gamma$.

We then have the well known valence formula for modular functions
as the weight zero case of the valence formula for modular forms, 
which is Theorem 4.1.4 of \cite{Rankin}.
Suppose a subset $\mathcal{F}$ of 
$\mathcal{H}\cup\{\infty\}\cup\mathbb{Q}$ is a fundamental
region for the action of $\Gamma$ along with a complete set of
inequivalent cusps, if $f$ is not the zero 
function then
\begin{align}
	\sum_{z\in\mathcal{F}}Ord_\Gamma(f;z) &=0.
\end{align}

To prove (\ref{propSomethingEq2}), we use the valence formula with $f$ being the difference of the two sides of (\ref{propSomethingEq2}). We note the only poles of $f$ can be at the cusps corresponding to $\Gamma_1(200)$ and 
so
\begin{align}
	\sum_{z\in\mathcal{F}}Ord_\Gamma(f;z)
	&\ge \sum_{\zeta\in C}Ord_\Gamma(f;\zeta)
\end{align}
where $C$ is a set of inequivalent cusps.

But if we have a lower bound on the cusps not equivalent to $\infty$, say
\begin{align}
	\sum_{\substack{\zeta\in C\\ \zeta\not\equiv \infty}}
		Ord_\Gamma(f;\zeta)
	\ge -M,	
\end{align}
and we knew $Ord_\Gamma(f;\infty)>M$, then by the valence formula $f$ must be
identically zero. That is to say to prove (\ref{propSomethingEq2}) we would need only verify the 
$q$-series expansions agree past $q^M$.

Noting $C(\tau)$ is a holomorphic modular form, in terms of getting a lower bound on the sum of the orders, we may ignore it. Using Theorem 4 of \cite{Robins}, we can compute the order of the generalized eta quotients at the cusps. Including $\infty$, there are $336$ inequivalent cusps for $\Gamma_1(200)$. To get a lower bound on the sum of orders at cusps not equivalent to $\infty$, at each cusp we take the minimum order of the six generalized eta quotients in
(\ref{propSomethingEq2}). Using Maple for the calculations, we find
\begin{align}
	\sum_{\substack{\zeta\in C\\ \zeta\not\equiv \infty}}
		Ord_\Gamma(f;\zeta)
	\ge -1840.	
\end{align} 
However we also verify in Maple that $f$ vanishes past $q^{1840}$ and so the equality holds.

\end{proof}

\begin{proposition}
\begin{align}
	D(\tau)\frac{\eta(2\tau)}{\eta(\tau)^2}
	&= D_0(q^5) + qD_1(q^5) + q^2D_2(q^5) + q^3D_3(q^5) + q^4D_4(q^5)
\end{align}
where
\begin{align}
	D_0(q) &= \frac{\aqprod{q^5}{q^5}{\infty}^2\aqprod{q^2}{q^2}{\infty}^2}
					{\aqprod{q^{10}}{q^{10}}{\infty}\aqprod{q}{q}{\infty}^4}
				D(\tau)
,
	\\
	D_1(q) &= 2\frac{\aqprod{q^4,q^6,q^{10}}{q^{10}}{\infty}}
				{\aqprod{q^2,q^8}{q^{10}}{\infty}^2\aqprod{q^3,q^7}{q^{10}}{\infty}}
,
	\\
	D_2(q) &= -6\frac{\aqprod{q^{10}}{q^{10}}{\infty}}
				{\aqprod{q,q^9}{q^{10}}{\infty}\aqprod{q^4,q^6}{q^{10}}{\infty}}
,
	\\
	D_3(q) &= -2\frac{\aqprod{q^{10}}{q^{10}}{\infty}}
				{\aqprod{q^2,q^8}{q^{10}}{\infty}\aqprod{q^3,q^7}{q^{10}}{\infty}}
,
	\\
	D_4(q) &= 4\frac{\aqprod{q^2,q^8,q^{10}}{q^{10}}{\infty}}
				{\aqprod{q,q^9}{q^{10}}{\infty}\aqprod{q^4,q^6}{q^{10}}{\infty}^2} 
.
\end{align}
\end{proposition}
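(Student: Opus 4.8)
The plan is to follow the proof of Proposition~\ref{ExtraSeries5DissectionProp1} almost verbatim, with $C$ and the $C_j$ replaced throughout by $D$ and the $D_j$. First I would clear denominators: multiplying the claimed identity through by $\frac{\eta(2\tau)}{\eta(\tau)^2}$ and rewriting each $q^j D_j(q^5)\,\frac{\eta(2\tau)}{\eta(\tau)^2}$ as a generalized eta quotient for the modulus $200$ — exactly the passage from (\ref{propSomethingEq1}) to (\ref{propSomethingEq2}) in the $C$-case — turns the assertion into an identity among the term $D(\tau)\frac{\eta(2\tau)^2}{\eta(\tau)^4}$, the term $\frac{\eta(2\tau)\eta(25\tau)^2\eta(10\tau)^2}{\eta(\tau)^2\eta(50\tau)\eta(5\tau)^4}\,D(5\tau)$, and four explicit generalized eta quotients. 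In fact these are the very same four eta quotients that appear in Proposition~\ref{ExtraSeries5DissectionProp1}, only with the coefficients $2,-6,-2,4$ in place of $-4,2,-6,2$, so only the numerology changes.

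Next I would verify that every term is a modular function on $\Gamma_1(200)$. We already know $D(\tau)\in M_1(\Gamma_1(10))$, hence $D(\tau)\frac{\eta(2\tau)^2}{\eta(\tau)^4}$ is a modular function on $\Gamma_1(40)\supseteq\Gamma_1(200)$; since $\frac{\eta(2\tau)\eta(25\tau)^2\eta(10\tau)^2}{\eta(\tau)^2\eta(50\tau)\eta(5\tau)^4}$ is a weight $-1$ form on $\Gamma_1(200)$ by Theorem~1.64 of \cite{Ono1} and $D(5\tau)$ is a weight $1$ holomorphic form on $\Gamma_1(50)\supseteq\Gamma_1(200)$, their product is a modular function on $\Gamma_1(200)$; and the four generalized eta quotients are modular functions on $\Gamma_1(200)$ by Theorem~3 of \cite{Robins}.

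Then I would apply the valence formula to $g$, the difference of the two sides of the cleared identity. Because $D(\tau)$ is holomorphic it contributes no poles and can be ignored when bounding orders at the cusps; using Theorem~4 of \cite{Robins} to compute the orders of the relevant generalized eta quotients at the $336$ inequivalent cusps of $\Gamma_1(200)$ and taking the term-by-term minimum, I would obtain an explicit lower bound $\sum_{\zeta\not\equiv\infty}Ord_{\Gamma_1(200)}(g;\zeta)\ge -M$ via a Maple computation (in the $C$-case this was $-1840$). If $g\not\equiv 0$ then the valence formula forces $\sum_{\zeta}Ord_{\Gamma_1(200)}(g;\zeta)=0$, so it then suffices to check that the $q$-expansion of $g$ vanishes past $q^{M}$, which is a finite Maple verification. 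The hard part, exactly as in Proposition~\ref{ExtraSeries5DissectionProp1}, is the bookkeeping of all $336$ cusps together with their widths and the large-order $q$-series verification once $M$ is known; there is no new mathematical difficulty, only a repetition of that machinery with different constants.

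Finally I would add a remark giving a unified route: since $C(\tau)+iD(\tau)=10\bigl(2V_{\chi_5,1}(2\tau)-V_{\chi_5,1}(\tau)\bigr)$ and $C(\tau)-iD(\tau)$ is obtained by conjugating the coefficients of its $q$-expansion, one may instead prove a single $5$-dissection for $\bigl(2V_{\chi_5,1}(2\tau)-V_{\chi_5,1}(\tau)\bigr)\frac{\eta(2\tau)}{\eta(\tau)^2}$, whose components are complex scalar multiples of real eta quotients, and then recover both Proposition~\ref{ExtraSeries5DissectionProp1} and the present proposition at once by separating real and imaginary parts. This also explains the observed pattern that corresponding components of $C$ and $D$ are proportional to the same eta quotient.
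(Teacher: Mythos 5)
Your proposal is correct and follows essentially the same route as the paper: the paper's own proof simply observes that $D(\tau)$ is also a weight-one form on $\Gamma_1(10)$ and that the eta quotients are identical to those in the $C$-proposition, so the cusp-order bound $-1840$ carries over and only the Maple verification of the $q$-expansion past $q^{1840}$ needs to be redone. Your closing remark about deriving both dissections at once from $C(\tau)+iD(\tau)=10\bigl(2V_{\chi_5,1}(2\tau)-V_{\chi_5,1}(\tau)\bigr)$ is a nice observation but goes beyond what the paper does.
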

\begin{proof}

Since $D$ is also a weight 1 form for $\Gamma_1(10)$ and these are the same products as in the previous proposition, we also need only verify the corresponding equality between modular functions holds past $q^{1840}$.
This verification is done in Maple.

\end{proof}

\begin{proposition}
\begin{align}
	\Parans{2C(\tau) - D(\tau)}
	\frac{\aqprod{q^2}{q^2}{\infty}^2}{\aqprod{q}{q}{\infty}^4}
	&= 5\frac{\aqprod{q^4,q^6}{q^{10}}{\infty}}{\aqprod{q^2,q^3}{q^5}{\infty}^2}
.
\end{align}
\end{proposition}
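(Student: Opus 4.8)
The plan is to follow the method of Proposition \ref{ExtraSeries5DissectionProp1}. We have already shown $C(\tau), D(\tau) \in M_1(\Gamma_1(10))$, so $2C(\tau) - D(\tau) \in M_1(\Gamma_1(10))$. Since $\frac{\aqprod{q^2}{q^2}{\infty}^2}{\aqprod{q}{q}{\infty}^4} = \frac{\eta(2\tau)^2}{\eta(\tau)^4}$ is a weight $-1$ modular form for $\Gamma_1(8)$, the left-hand side of the asserted identity is a modular function (a weight zero modular form) on $\Gamma_1(40)$. For the right-hand side, I would write it in generalized eta notation as $5\,\frac{\GEta{10}{4}{\tau}}{\GEta{5}{2}{\tau}^2}$; the fractional powers of $q$ cancel exactly because $P_2(4/10) = P_2(2/5)$, and by Theorem 3 of \cite{Robins} this too is a modular function on $\Gamma_1(40)$ (after checking the relevant conditions).

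First I would set $f(\tau)$ to be the difference of the two sides. Because $\eta$ is non-vanishing on $\mathcal{H}$, $f$ is holomorphic on $\mathcal{H}$, so its only possible poles lie at the cusps of $\Gamma_1(40)$. Ignoring the holomorphic factors $C(\tau)$ and $D(\tau)$ for the purpose of a lower bound, I would use Theorem 4 of \cite{Robins} to compute the orders of $\frac{\GEta{10}{4}{\tau}}{\GEta{5}{2}{\tau}^2}$ at each cusp, together with the ordinary eta transformation for $\frac{\eta(2\tau)^2}{\eta(\tau)^4}$, to obtain an explicit bound $\sum_{\zeta\not\equiv\infty} Ord_\Gamma(f;\zeta) \ge -M$ over a set of inequivalent cusps. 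The valence formula (the weight zero case of Theorem 4.1.4 of \cite{Rankin}, exactly as used in Proposition \ref{ExtraSeries5DissectionProp1}) then forces $f \equiv 0$ provided $Ord_\Gamma(f;\infty) > M$, i.e.\ provided the $q$-expansion of $f$ vanishes past $q^M$; this final step is a routine computer check.

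Alternatively, and more cheaply, one may note that, after multiplying through by $\phi(-q)^2$, the claim is an equality of two holomorphic modular forms of weight $1$ on a congruence subgroup $\Gamma_1(N)$ of small level, so that by the Sturm bound it suffices to match only a modest number of Fourier coefficients. Either way the substantive content is purely organizational: the main obstacle is pinning down a single level $N$ on which all three of $2C(\tau) - D(\tau)$, $\frac{\eta(2\tau)^2}{\eta(\tau)^4}$, and the generalized eta quotient $\frac{\GEta{10}{4}{\tau}}{\GEta{5}{2}{\tau}^2}$ are simultaneously modular, and then carrying out the cusp-order computation that produces $M$ (equivalently, the Sturm bound); once that number is known the verification of the leading $q$-coefficients is immediate.
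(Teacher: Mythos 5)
Your proposal is correct and follows essentially the same route as the paper: rewrite the identity as an equality of modular functions on $\Gamma_1(40)$ (the right side as $5\,\GEta{10}{4}{\tau}/\GEta{5}{2}{\tau}^2$ via Theorem 3 of \cite{Robins}), bound the total order at the cusps away from $\infty$, and finish with the valence formula plus a finite $q$-expansion check (the paper's bound is $-48$, so agreement past $q^{48}$ suffices).
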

\begin{proof}
We see this proposition is equivalent to
\begin{align}\label{someOtherPropEq1}
	\Parans{2C(\tau) - D(\tau)}
	\frac{\eta(2\tau)^2}{\eta(\tau)^4}
	&=
	5\frac{\GEta{10}{4}{\tau}}{\GEta{5}{2}{\tau}^2}.
\end{align}

However we know the left hand side of (\ref{someOtherPropEq1}) to be a modular function for $\Gamma_1(40)$. Using Theorem 3 of \cite{Robins} we find that the right hand side is as well. Comparing the orders at cusps
as we did in the proof of Proposition \ref{ExtraSeries5DissectionProp1}, we find a lower bound 
for the sum of orders at the cusps other than $\infty$ to be $-48$. However we verify in Maple that (\ref{someOtherPropEq1}) holds past $q^{48}$ and so the equality must hold.
\end{proof}

\begin{proposition}
\begin{align}
	\Parans{3D(\tau) - C(\tau)}
	\frac{\aqprod{q^2}{q^2}{\infty}^2}{\aqprod{q}{q}{\infty}^4}
	&= 10\frac{q}
		{\aqprod{q^3,q^4,q^6,q^7}{q^{10}}{\infty}\aqprod{q^5}{q^{10}}{\infty}^2}
.
\end{align}
\end{proposition}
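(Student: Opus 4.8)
The plan is to proceed exactly as in the three propositions above: recast the claimed identity as an equality of modular functions for $\Gamma_1(40)$ and then finish with the valence formula together with a finite $q$-series comparison. First I would note $\frac{\aqprod{q^2}{q^2}{\infty}^2}{\aqprod{q}{q}{\infty}^4}=\frac{\eta(2\tau)^2}{\eta(\tau)^4}$ and rewrite the right-hand side in generalized eta notation. A short computation with $P_2(3/10)$, $P_2(2/5)$, $P_2(1/2)$ shows that the $q$-exponents contributed by $\GEta{10}{3}{\tau}$, $\GEta{10}{4}{\tau}$, $\GEta{10}{5}{\tau}$ sum to $-1$, so that
\begin{align}
	10\frac{q}{\aqprod{q^3,q^4,q^6,q^7}{q^{10}}{\infty}\aqprod{q^5}{q^{10}}{\infty}^2}
	&= \frac{10}{\GEta{10}{3}{\tau}\GEta{10}{4}{\tau}\GEta{10}{5}{\tau}},
\end{align}
and hence the proposition is equivalent to
\begin{align}\label{FinalExtra5Eq}
	\Parans{3D(\tau)-C(\tau)}\frac{\eta(2\tau)^2}{\eta(\tau)^4}
	&= \frac{10}{\GEta{10}{3}{\tau}\GEta{10}{4}{\tau}\GEta{10}{5}{\tau}}.
\end{align}

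Next, since $C(\tau),D(\tau)\in M_1(\Gamma_1(10))$ and $\frac{\eta(2\tau)^2}{\eta(\tau)^4}$ is a weight $-1$ modular form for $\Gamma_1(8)$, the left-hand side of (\ref{FinalExtra5Eq}) is a modular function for $\Gamma=\Gamma_1(40)$, and by Theorem 3 of \cite{Robins} so is the right-hand side. I would then apply the valence formula to the difference $f$ of the two sides of (\ref{FinalExtra5Eq}): its only possible poles lie at the cusps of $\Gamma_1(40)$, so it suffices to exhibit an $M$ with $\sum_{\zeta\in C,\ \zeta\not\equiv\infty}Ord_\Gamma(f;\zeta)\ge -M$ and then to check that $Ord_\Gamma(f;\infty)>M$. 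Such an $M$ is obtained, as in Proposition \ref{ExtraSeries5DissectionProp1}, by using Theorem 4 of \cite{Robins} to compute the order at each cusp of $\Gamma_1(40)$ of the two generalized eta quotients making up $f$ and taking the minimum at each cusp not equivalent to $\infty$ (the holomorphic factor $3D(\tau)-C(\tau)$ may be ignored when bounding poles); this is a finite Maple calculation. One then checks in Maple that the $q$-expansions of the two sides of (\ref{FinalExtra5Eq}) agree past $q^{M}$, which by the valence formula forces $f\equiv 0$.

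The part requiring genuine work is the cusp bookkeeping for $\Gamma_1(40)$ — producing a complete set of inequivalent cusps and evaluating the generalized eta quotients there via \cite{Robins} with the order conventions of \cite{Rankin} — after which the remaining truncated-series check is entirely analogous to those already carried out in the preceding propositions.
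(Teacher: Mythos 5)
Your proposal is correct and follows the paper's own argument exactly: rewrite the identity in generalized eta notation (the exponent check giving the factor $q$ is right), observe both sides are modular functions for $\Gamma_1(40)$, and conclude via the valence formula after bounding the orders at the cusps away from $\infty$ and verifying the truncated $q$-expansions agree (the paper's bound is $q^{48}$, as in the preceding proposition). No substantive differences.
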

\begin{proof}
We see this proposition is equivalent to
\begin{align}\label{someOther2PropEq1}
	\Parans{3D(\tau) - C(\tau)}
	\frac{\eta(2\tau)^2}{\eta(\tau)^4}
	&=
	10\frac{1}{\GEta{10}{3}{\tau}\GEta{10}{4}{\tau}\GEta{10}{5}{\tau}}.
\end{align}

Again both sides are modular functions for $\Gamma_1(40)$ and taking the minimum of orders gives that we need only verify the equality in (\ref{someOther2PropEq1}) holds past $q^{48}$.
\end{proof}

With these propositions we can complete the proof of 
Theorem \ref{FiveDissectionExtraSeries}. We have
\begin{align}
	& 	
	\frac{\aqprod{-q}{q}{\infty}}{\aqprod{q}{q}{\infty}}
	\Parans{
		\frac{1}{2}
		+
		\sum_{n=1}^\infty \frac{(1-\zeta_5)(1-\zeta_5^{-1})(-1)^nq^{n}}
			{(1-\zeta_5q^n)(1-\zeta_5^{-1}q^n)}
	}
	\nonumber\\
	&=
	\frac{\aqprod{-q}{q}{\infty}}{\aqprod{q}{q}{\infty}}
	\Parans{
		\frac{1}{2}
		+
		\sum_{n=1}^\infty 
			\frac{(1-\zeta_5)(1-\zeta_5^{-1})(-1)^nq^{n}(1-q^n)(1-\zeta_5^2q^n)(1-\zeta_5^3q^n)}
			{(1-q^{5n})}
	}
	\nonumber\\
	&=
	\frac{\aqprod{-q}{q}{\infty}}{\aqprod{q}{q}{\infty}}
	\left( 	
			\frac{1}{2}
			+(3+\zeta_5^2 + \zeta_5^3)
			\sum_{n=1}^\infty \frac{(-1)^nq^{n}}{(1-q^{5n})}
			-(4+3\zeta_5^2 + 3\zeta_5^3)
			\sum_{n=1}^\infty \frac{(-1)^nq^{2n}}{(1-q^{5n})}
			\right.
			\nonumber\\
			&\hspace{60pt}\left.
			+(4+3\zeta_5^2 + 3\zeta_5^3)
			\sum_{n=1}^\infty \frac{(-1)^nq^{3n}}{(1-q^{5n})}
			-(3+\zeta_5^2 + \zeta_5^3)
			\sum_{n=1}^\infty \frac{(-1)^nq^{4n}}{(1-q^{5n})}
			\right)
	\nonumber\\
	&=
	\frac{\aqprod{-q}{q}{\infty}}{\aqprod{q}{q}{\infty}}
	\Parans{
			\frac{1}{2}
			+(3+\zeta_5^2 + \zeta_5^3)
			\sum_{n=1}^\infty \frac{(-1)^n(q^{n}-q^{4n})}{(1-q^{5n})}
			-(4+3\zeta_5^2 + 3\zeta_5^3)
			\sum_{n=1}^\infty \frac{(-1)^n(q^{2n}-q^{3n})}{(1-q^{5n})}
	}
	\nonumber\\
	&=
	\frac{\aqprod{-q}{q}{\infty}}{\aqprod{q}{q}{\infty}}
	\Parans{
		\frac{1}{2}
		- \frac{3(3+\zeta_5^2 + \zeta_5^3)}{10}
		+ \frac{(4+3\zeta_5^2 + 3\zeta_5^3)}{10}
		+ \frac{(3+\zeta_5^2 + \zeta_5^3)}{10}C(\tau)
		- \frac{(4+3\zeta_5^2 + 3\zeta_5^3)}{10}D(\tau)
	}
	\nonumber\\
	&=
	\frac{\aqprod{-q}{q}{\infty}}{10\aqprod{q}{q}{\infty}}
	\Parans{
		(3+\zeta_5^2 + \zeta_5^3)C(\tau)
		-(4+3\zeta_5^2 + 3\zeta_5^3)D(\tau)
	}
	\nonumber\\
	&=
	B_0(q^5) + qB_1(q^5) + q^2B_2(q^5) +q^3B_3(q^5) + q^4B_4(q^5)
\end{align}
where
\begin{align}
	B_0(q)
	&=
		\frac{\aqprod{q^5}{q^5}{\infty}^2\aqprod{q^2}{q^2}{\infty}^2}
		{10\aqprod{q^{10}}{q^{10}}{\infty}\aqprod{q}{q}{\infty}^4}
		\Parans{
			(3+\zeta_5^2 + \zeta_5^3)C(\tau)
			-(4+3\zeta_5^2 + 3\zeta_5^3)D(\tau)
		}
	\nonumber\\
	&=
		\frac{\aqprod{q^5}{q^5}{\infty}^2\aqprod{q^2}{q^2}{\infty}^2}
		{10\aqprod{q^{10}}{q^{10}}{\infty}\aqprod{q}{q}{\infty}^4}
		\Parans{
			(2-\zeta_5 - \zeta_5^{-1})C(\tau)
			-(1-3\zeta_5 - 3\zeta_5^{-1})D(\tau)
		} 
	\nonumber\\
	&=
		\frac{\aqprod{q^5}{q^5}{\infty}^2\aqprod{q^4,q^6}{q^{10}}{\infty}}
		{2\aqprod{q^{10}}{q^{10}}{\infty}\aqprod{q^2,q^3}{q^5}{\infty}^2}
		+
		(\zeta_5+\zeta_5^{-1})
		\frac{q\aqprod{q^5}{q^5}{\infty}^2}
		{\aqprod{q^{10}}{q^{10}}{\infty}\aqprod{q^3,q^4,q^6,q^7}{q^{10}}{\infty}\aqprod{q^5}{q^{10}}{\infty}^2}
,
	\\
	B_1(q)
	&=
		(\zeta_5+\zeta_5^{-1}-1)\frac{\aqprod{q^4,q^6,q^{10}}{q^{10}}{\infty}}
		{\aqprod{q^2,q^8}{q^{10}}{\infty}^2 \aqprod{q^3,q^7}{q^{10}}{\infty}}
,
	\\
	B_2(q)
	&=
		(1-2\zeta_5-2\zeta_5^{-1})
		\frac{\aqprod{q^{10}}{q^{10}}{\infty}}
			{\aqprod{q,q^9}{q^{10}}{\infty}\aqprod{q^4,q^6}{q^{10}}{\infty}}
,
	\\
	B_3(q)
	&=
		-\frac{\aqprod{q^{10}}{q^{10}}{\infty}}
			{\aqprod{q^2,q^8}{q^{10}}{\infty}\aqprod{q^3,q^7}{q^{10}}{\infty}}
,
	\\
	B_4(q)
	&=
		(\zeta_5+\zeta_5^{-1})
		\frac{\aqprod{q^2,q^8,q^{10}}{q^{10}}{\infty}}
			{\aqprod{q,q^9}{q^{10}}{\infty}\aqprod{q^4,q^6}{q^{10}}{\infty}^2}
.
\end{align}
This finished the proof of Theorem \ref{FiveDissectionExtraSeries}.

\end{proof}

\section{Remarks}

In section 3 we proved the coefficients of $\SB(z,q)$, $\SB_1(z,q)$,
and $\SB_2(z,q)$ are nonnegative by showing each summand 
$\frac{q^n\aqprod{-q^{n+1},q^{n+1}}{q}{\infty}}{\aqprod{zq^n,z^{-1}q^n}{q}{\infty}}$
has nonnegative coefficients. Numerical evidence suggests $\STwoB(z,q)$
also has nonnegative coefficients. However, the corresponding individual summands 
for $\STwoB(z,q)$ do not have nonnegative
coefficients themselves. In particular we find the coefficient of $q^{10}$
in
$q^4\aqprod{-q^5,q^6}{q^2}{\infty}/\aqprod{zq^4,z^{-1}q^4}{q^2}{\infty}$
to be $z^{-1}+z-1$. Thus for $S2(z,q)$ a more complicated argument is
required.

\begin{conjecture}
For all $m$ and $n$ we have $N_{\STwoB}(m,n)$ is nonnegative.
\end{conjecture}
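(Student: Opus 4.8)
The plan is to reduce the conjecture to an inequality between the $M_2$-rank and the $M_2$-crank and, if that does not settle it, to fall back on a sign-reversing involution on $\STwoB$-vector partitions. Fix $n$ and set $f_n(z)=\sum_m N_{\STwoB}(m,n)z^m$ and $g_n(z)=\sum_m\bigl(N2(m,n)-M2(m,n)\bigr)z^m$; both are Laurent polynomials invariant under $z\mapsto z^{-1}$, the symmetry of $f_n$ coming from interchanging $\pi_2$ and $\pi_3$ in a $\STwoB$-vector partition. By Theorem \ref{MSptRankCrank}, $g_n(z)=(2-z-z^{-1})f_n(z)$, and since $g_n(1)=0$ (both $N2$ and $M2$ total to the number of partitions of $n$ with distinct odd parts) the recursion $a_{k+1}=2a_k-a_{k-1}-[z^k]g_n(z)$ solves term by term to give, for every integer $k$,
\begin{equation*}
N_{\STwoB}(k,n)=\sum_{i>k}(i-k)\bigl(M2(i,n)-N2(i,n)\bigr).
\end{equation*}
Thus the conjecture is \emph{equivalent} to the family of tail inequalities $\sum_{i>k}(i-k)M2(i,n)\ge\sum_{i>k}(i-k)N2(i,n)$ for all $k\ge0$ and $n\ge1$; the case $k=0$ says that the first absolute moment of the $M_2$-crank dominates that of the $M_2$-rank on partitions of $n$ with distinct odd parts. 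I would attempt these inequalities by constructing, for each $k$ and $n$, an injection from the partitions of $n$ with distinct odd parts and $M_2$-rank $i>k$, each counted with multiplicity $i-k$, into the corresponding set for the $M_2$-crank, in analogy with known comparisons between the rank and crank of ordinary partitions; alternatively one could look for generating-function inequalities or closed forms for these truncated moments in the spirit of the first-moment results of \cite{ACKO}.

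If the analytic route stalls, the combinatorial fallback is a sign-reversing involution. Here $N_{\STwoB}(m,n)$ is the weighted count of $\vec\pi=(\pi_1,\pi_2,\pi_3,\pi_4)\in\STwoB$ of norm $n$ with crank $m=\#(\pi_2)-\#(\pi_3)$ and weight $(-1)^{\#(\pi_1)-1}$, so the negative contributions are precisely those with $\#(\pi_1)$ even. It would suffice to define an involution on the $\vec\pi\in\STwoB$ with $\#(\pi_1)\ge2$ that flips the parity of $\#(\pi_1)$ while preserving the norm and the crank. The natural moves — transferring the smallest part of $\pi_1$ into $\pi_2$, $\pi_3$ or $\pi_4$, or splitting and merging parts of $\pi_1$ — all change $\#(\pi_1)$ by one but also shift $s(\pi_1)$, so they must be organized to respect the admissibility conditions $s(\pi_1)\le s(\pi_2)$, $s(\pi_1)\le s(\pi_3)$, $s(\pi_1)<s(\pi_4)$ together with the parity restrictions defining $\STwoB$ (recall $\pi_4$ consists of distinct \emph{odd} parts while $\pi_1,\pi_2,\pi_3$ have only even parts). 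A third, related possibility is to adapt the doubly-marked-partition bijection of \cite{Ch-Ji-Za} to the $M_2$-setting.

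It is also worth recording why the direct series argument of Section \ref{subsec:spp} cannot simply be copied, and what one might salvage: expanding each summand $q^{2n}\aqprod{-q^{2n+1},q^{2n+2}}{q^2}{\infty}/\aqprod{zq^{2n},z^{-1}q^{2n}}{q^2}{\infty}$ into a manifestly nonnegative $q$-series by a Durfee-type identity in the style of \cite[Prop.~4.1]{BG} fails, since (as the Remark notes) the $n=2$ summand already has $z^{-1}+z-1$ as its coefficient of $q^{10}$, the offending $-1$ coming from the numerator factor $\aqprod{q^{2n+2}}{q^2}{\infty}$. The remedy I would pursue is to telescope — combine the $n$-th summand with a correction drawn from its neighbours, i.e.\ regroup $\STwoB(z,q)$ along a second, outer Durfee square — so that the numerator products reassemble into shifted crank generating functions $\aqprod{q^2}{q^2}{\infty}/\aqprod{zq^{2j},z^{-1}q^{2j}}{q^2}{\infty}$, which have nonnegative coefficients. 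The main obstacle, common to every route, is that $\pi_1$ is the only sign-carrying component and its size is entangled with the constraints $s(\pi_1)$ places on $\pi_2,\pi_3,\pi_4$; on the analytic side this entanglement is exactly the sign cancellation in $\aqprod{q^{2n+2}}{q^2}{\infty}$, which has no counterpart in the $\SB$, $\SB_1$, $\SB_2$ cases where the analogous factor combines cleanly with the base of the Pochhammers. I expect the decisive step to be the global telescoping of $\STwoB(z,q)$ or, equivalently, a crank-preserving and $s(\pi_1)$-aware sign-reversing involution; the moment reformulation above is the cleanest target, but the inequalities $\sum_{i>k}(i-k)M2(i,n)\ge\sum_{i>k}(i-k)N2(i,n)$ may themselves turn out to be the bottleneck.
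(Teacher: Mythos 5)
The statement you are addressing is left in the paper as a \emph{conjecture}: the authors prove nothing here beyond the negative observation that the termwise argument of Section \ref{subsec:spp} cannot be transplanted, because the individual summands of $\STwoB(z,q)$ already fail to have nonnegative coefficients (the $z^{-1}+z-1$ phenomenon you cite). So there is no proof in the paper to compare against, and your proposal does not supply one either: it is a research plan listing three candidate strategies (tail-moment inequalities, a sign-reversing involution, a telescoping of the series), none of which is carried to completion.

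Your first reduction is nevertheless correct and is the most concrete part of the write-up. Since $(1-z)(1-z^{-1})=2-z-z^{-1}$, Theorem \ref{MSptRankCrank} does give $g_n(z)=(2-z-z^{-1})f_n(z)$ in your notation, and because $f_n$ is a Laurent polynomial with finite support the second-order recurrence has the unique decaying solution
\begin{equation*}
N_{\STwoB}(k,n)=\sum_{i>k}(i-k)\bigl(M2(i,n)-N2(i,n)\bigr),
\end{equation*}
as one verifies by checking that the right side satisfies the recurrence and that the homogeneous solutions $A+Bk$ are excluded by vanishing at infinity (the needed normalization $g_n(1)=0$ holds since $N2(\cdot,n)$ and $M2(\cdot,n)$ both total the number of partitions of $n$ without repeated odd parts). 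This is the all-$k$ analogue of the paper's Section 6 identity $\overline{M}_1(q)-\overline{N}_1(q)=\sum_n N_{\SB}(0,n)q^n$, and the $k=0$ case is indeed a first-moment inequality in the spirit of \cite{ACKO}. But the equivalence only trades one open family of positivity statements for another: you construct no injection, no involution, and no telescoped expansion, and you yourself concede the tail inequalities may be the bottleneck. As it stands, nothing in the proposal establishes $N_{\STwoB}(m,n)\ge 0$ in any case not already checkable by hand, so the conjecture remains exactly as open as the paper leaves it.
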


Related to the nonnegativity of these coefficients is the difference 
between the first rank and crank moment. If we let $N(m,n)$ denote the 
number of partitions of $n$ with rank $m$ and $M(m,n)$ denote the 
number of partitions of $n$ with crank $m$, then for $k\ge 1$ the
$k$th rank moment $N_k(n)$ and $kth$ crank moment $M_k(n)$ are given by
\begin{align}
	N_k(n) &= \sum_{n\in\mathbb{Z}} m^kN(m,n),
	\\
	M_k(n) &= \sum_{n\in\mathbb{Z}} m^kM(m,n)
.
\end{align}
These rank and crank moments were introduced by Atkin and the first author
in \cite{AG}. To allow for non-trivial odd moments Andrews, Chan, 
and Kim in \cite{ACK} defined the modified rank and crank moments by
\begin{align}
	N_k^+(n) &= \sum_{n=1}^\infty m^kN(m,n),
	\\
	M_k^+(n) &= \sum_{n=1}^\infty m^kM(m,n)
.
\end{align}
In the same paper they proved for all positive integers $n$ that
$M_1^+(n) > N_1^+(n)$. This was done by manipulating the generating function
for $M_1^+(n)- N_1^+(n)$ and carefully grouping the terms in such a way that
it is clear the coefficients are positive. However it turns out that 
$M_1^+(n) - N_1^+(n) = N_{\mbox{\rm S}}(0,n)$, the latter was proved to be nonnegative
in \cite{AGL} and so it is immediate that $M_1^+(n) \ge N_1^+(n)$.

Recently Andrews, Chan, Kim, and Osburn in \cite{ACKO} considered the moments for the rank
and crank of overpartitions,
\begin{align}
	\overline{N}_k^+(n) &= \sum_{n=1}^\infty m^k\overline{N}(m,n),
	\\
	\overline{M}_k^+(n) &= \sum_{n=1}^\infty m^k\overline{M}(m,n)
.
\end{align}
We have the generating functions of these moments given as
\begin{align}
	\overline{N}_k(q) &= \sum_{n=1}^\infty \overline{N}^+_k(n)q^n
	,\\
	\overline{M}_k(q) &= \sum_{n=1}^\infty \overline{M}^+_k(n)q^n
.
\end{align}
In that paper they prove $\overline{M}_1^+(n)>\overline{N}_1^+(n)$. As we'll prove shortly,
it also turns out that $\overline{M}_1^+(n)-\overline{N}_1^+(n)=N_{\SB}(0,n)$.
Thus the nonnegativity of the coefficients of $\SB(z,q)$ gives
$\overline{M}_1^+(n)\ge\overline{N}_1^+(n)$.

To begin we use \cite[equation (7.15)]{Garvan1} that
\begin{align}
	\frac{\aqprod{q}{q}{\infty}}{\aqprod{zq,z^{-1}q}{q}{\infty}}
	&=
	\frac{1}{\aqprod{q}{q}{\infty}}
	\Parans{ 
		1 
		+ 
		\sum_{n=1}^\infty\frac{(1-z)(1-z^{-1})(-1)^nq^{n(n+1)/2}(1+q^n)}
			{(1-zq^n)(1-z^{-1}q^n)}
	}
,
\end{align}
so we have
\begin{align}
	\frac{\aqprod{-q,q}{q}{\infty}}{\aqprod{zq,z^{-1}}{q}{\infty}}
	&=
	\frac{\aqprod{-q}{q}{\infty}}{\aqprod{q}{q}{\infty}}
	\Parans{ 
		1 
		+ 
		\sum_{n=1}^\infty\frac{(1-z)(1-z^{-1})(-1)^nq^{n(n+1)/2}(1+q^n)}
			{(1-zq^n)(1-z^{-1}q^n)}
	}
.
\end{align}
With this we can express $\SB(z,q)$ as follows.
\begin{align}
	\SB(z,q)
	=&
		\frac{\aqprod{-q}{q}{\infty}}{(1-z)(1-z^{-1})\aqprod{q}{q}{\infty}}
		\Parans{1+
			2\sum_{n=1}^\infty \frac{(1-z)(1-z^{-1})(-1)^nq^{n^2+n}}
				{(1-zq^n)(1-z^{-1}q^n)}
		}	 
		- 
		\frac{\aqprod{-q,q}{q}{\infty}}{\aqprod{z,z^{-1}}{q}{\infty}}
	\\
	=&
		\frac{\aqprod{-q}{q}{\infty}}{(1-z)(1-z^{-1})\aqprod{q}{q}{\infty}}
		\Parans{1+
			2\sum_{n=1}^\infty \frac{(1-z)(1-z^{-1})(-1)^nq^{n^2+n}}
				{(1-zq^n)(1-z^{-1}q^n)}
		}
		\nonumber
		\\	 
		&-
		\frac{\aqprod{-q}{q}{\infty}}{(1-z)(1-z^{-1})\aqprod{q}{q}{\infty}}
		\Parans{1+
			\sum_{n=1}^\infty \frac{(1-z)(1-z^{-1})(-1)^nq^{n(n+1)/2}(1+q^n)}
				{(1-zq^n)(1-z^{-1}q^n)}
		}
	\\
	=&
		\frac{\aqprod{-q}{q}{\infty}}{\aqprod{q}{q}{\infty}}
		\sum_{n=1}^\infty \frac{(-1)^{n+1}q^{n(n+1)/2}(1+q^n)}
			{(1-zq^n)(1-z^{-1}q^n)}
		-
		2\frac{\aqprod{-q}{q}{\infty}}{\aqprod{q}{q}{\infty}}		
		\sum_{n=1}^\infty \frac{(-1)^{n+1}q^{n^2+n}}
				{(1-zq^n)(1-z^{-1}q^n)}	 
	\\
	=&
		\frac{\aqprod{-q}{q}{\infty}}{\aqprod{q}{q}{\infty}}
		\sum_{n=1}^\infty \frac{(-1)^{n+1}q^{n(n+1)/2}}{(1-q^n)}
			\Parans{\sum_{m=0}^\infty z^mq^{nm} + \sum_{m=1}^\infty z^{-m}q^{nm}}
		\nonumber
		\\
		&-
		2\frac{\aqprod{-q}{q}{\infty}}{\aqprod{q}{q}{\infty}}
		\sum_{n=1}^\infty \frac{(-1)^{n+1}q^{n^2+n}}{(1-q^{2n})}
			\Parans{\sum_{m=0}^\infty z^mq^{nm} + \sum_{m=1}^\infty z^{-m}q^{nm}}	 	
.
\end{align}
In the last equality we have used that
\begin{align}
	\frac{1-q^{2n}}{(1-zq^n)(1-z^{-1}q^n)}
	&= \frac{1}{1-zq^n} + \frac{1}{1-z^{-1}q^n} - 1
.
\end{align}

But $\sum_{n=0}^\infty N_{\SB}(0,n)q^n$ is the coefficient of $z^0$ in $\SB(z,q)$.
From the above we see that
\begin{align}\label{MomentsToSptEq}
	\sum_{n=0}^\infty N_{\SB}(0,n)q^n
	&= 
		\frac{\aqprod{-q}{q}{\infty}}{\aqprod{q}{q}{\infty}}
		\sum_{n=1}^\infty \frac{(-1)^{n+1}q^{n(n+1)/2}}{(1-q^n)}
		-
		2\frac{\aqprod{-q}{q}{\infty}}{\aqprod{q}{q}{\infty}}
		\sum_{n=1}^\infty \frac{(-1)^{n+1}q^{n^2+n}}{(1-q^{2n})}	 		
.
\end{align}
With (\ref{MomentsToSptEq}) and Proposition 2.1 of \cite{ACKO} we have
\begin{align}
	\overline{M}_1(q) - \overline{N}_1(q)
	&= \sum_{n=1}^\infty N_{\SB}(0,n)q^n
.
\end{align}
As explained earlier, we know each $N_{\SB}(0,n)$ to be nonnegative and 
so this is another proof that $\overline{M}_1^+(n)\ge\overline{N}_1^+(n)$.

There is also the $d=e=1$ case for the general spt function, which as
 noted in \cite{BLO1} reduces to $\overline{pp}(n)/4$, where
$\overline{pp}(n)$ is the number of overpartition pairs of $n$. 
The methods in this paper do not give a new proof of the congruences for 
$\overline{pp}(n)$. Using Bailey's lemma on a two variable generating 
function and applying Watson's transformation to the generating function
for the rank of 
overpartition pairs does at first appear to give a difference between 
the rank of overpartition pairs and some residual crank. However, 
the resulting crank is
\begin{align}
	\frac{\aqprod{-q}{q}{\infty}^2}{\aqprod{zq,z^{-1}q}{q}{\infty}},
\end{align}
which can be written in terms of the rank for overpartition pairs 
as in equation (2.1) of \cite{BL1}. In particular, the generating function 
for the rank of overpartition pairs is
\begin{align}
	\sum_{n=0}^\infty\sum_{m=-\infty}^\infty \overline{NN}(m,n)z^mq^n
	&=
	\sum_{n=0}^\infty \frac{\aqprod{-1,-1}{q}{n}q^n}{\aqprod{zq,z^{-1}q}{q}{n}}
\end{align}
and
\begin{align}
	\frac{4}{(1+z)(1+z^{-1})}
	+ \sum_{n=1}^\infty \frac{\aqprod{-1,-1}{q}{n}q^n}{\aqprod{zq,z^{-1}q}{q}{n}}
	&=
	\frac{4\aqprod{-q}{q}{\infty}^2}{\aqprod{zq,z^{-1}q}{q}{\infty}}.
\end{align}
Thus the method of proving congruences in this paper only gives the proofs already given
by Bringmann and Lovejoy in \cite{BL1}.

\bibliographystyle{abbrv}
\bibliography{ranksAndCranksRef}

\end{document}